
%

\documentclass{amsproc}

\usepackage{mathtools}
\usepackage{tikz}
\usepackage{tikz-3dplot}
\usepackage{hyperref}
\usepackage{mathrsfs}
\usepackage{courier}
\usepackage{comment}
\usepackage{graphicx}


\def\J{\mathcal{J}}
\def\O{\mathcal{O}}
\def\P{\mathbb{P}}
\def\R{\mathbb{R}}
\def\Sp{\mathbb{S}}
\def\V{\mathcal{V}}

\def\1{\mathbf{1}}

\def\dP{d_{\P^2}}
\def\dS{d_{\Sp^2}}

\def\mat#1{\begin{bmatrix}#1\end{bmatrix}}

\def\vspan{\text{span}}

\def\LRarrow{\Leftrightarrow}


\theoremstyle{plain}
\newtheorem{theorem}{Theorem}
\newtheorem{lemma}{Lemma}

\newtheorem{definition}[lemma]{Definition}

\newtheorem{construction}[lemma]{Construction}
\newtheorem{proposition}[lemma]{Proposition}
\newtheorem{problem}{Problem}

\newtheorem{remark}{Remark}

\numberwithin{equation}{section}

\makeatletter
\DeclareFontFamily{U}{tipa}{}
\DeclareFontShape{U}{tipa}{m}{n}{<->tipa10}{}
\newcommand{\Arc@char}{{\usefont{U}{tipa}{m}{n}\symbol{62}}}%

\newcommand{\Arc}[1]{\mathpalette\Arc@Arc{#1}}

\newcommand{\Arc@Arc}[2]{%
  \sbox0{$\m@th#1#2$}%
  \vbox{
    \hbox{\resizebox{\wd0}{\height}{\Arc@char}}
    \nointerlineskip
    \box0
  }%
}
\makeatother

\begin{document}

\title{The Fermat-Torricelli Problem in the Projective Plane}


\author{Manolis C. Tsakiris}
\address{Key Laboratory of Mathematics Mechanization, Academy of Mathematics and Systems Science, Chinese Academy of Sciences, Beijing, 100190, China}
\email{manolis@amss.ac.cn}

\address{School of Information Science and Technology, ShanghaiTech University, No.393 Huaxia Middle Road, Pudong, Shanghai, China}
\email{mtsakiris@shanghaitech.edu.cn}

\author{Sihang Xu }
\address{School of Information Science and Technology, ShanghaiTech University, No.393 Huaxia Middle Road, Pudong, Shanghai, China}
\email{xush@shanghaitech.edu.cn}

\begin{abstract}
We pose and study the Fermat-Torricelli problem for a triangle in the projective plane under the sine distance. Our main finding is that if every side of the triangle has length greater than $\sin 60^\circ$, then the Fermat-Torricelli point is the vertex opposite the longest side. Our proof relies on a complete characterization of the equilateral case together with a deformation argument.  
\end{abstract}

\maketitle

\section{Introduction}

The classical Fermat-Torricelli problem is concerned with the location of the point on the plane that minimizes the sum of its Euclidean distances to three given non-collinear points. A complete answer was already prescribed by Torricelli   \cite{vivianimaximis} and by now many different proofs exist, e.g., see \cite{eriksson1997fermat}. Fermat-Torricelli problems have been extended from planar to spherical triangles \cite{MGP,cockayne1967steiner,cockayne1972fermat}, inner product spaces \cite{benitez2002location,benitez2007middle}, Minkowski spaces \cite{chakerian1985fermat,martini2002fermat}, Riemannian manifolds \cite{yang2010riemannian,afsari2011riemannian} or even to a tropical geometry setting \cite{lin2018tropical}. Of great practical significance is the generalization of Fermat-Torricelli points to so-called Steiner-Weber points, where one seeks the point that minimizes the sum of the weighted distances to $n$ other points, e.g., see \cite{kupitz1997geometric, zachos2008weighted, hajja2017complete}. There are three main questions attached to each such problem instance: existence, uniqueness and location \cite{benitez2002location}.  These depend in a complicated way on the properties of the underlying manifold and, with the exception of the planar case (e.g., \cite{barbara2000fermat,uteshev2014analytical}), they are in principle difficult to answer. A notable step forward in this direction is the work of Afsari \cite{afsari2011riemannian}. 

The Fermat-Torricelli point can be thought of as a geometric median, as opposed to the geometric mean. This latter relies on minimizing the sum of the squared distances and, as such, it is much easier to deal with analytically and computationally. On the other hand, geometric medians are known to be robust to outliers and thus are very important for contemporary applications in statistics and adjacent fields. Recently, in the machine learning problem of hyperplane clustering, where one is concerned with clustering a given set of points $\mathscr{X} \subset \mathbb{R}^D$ sampled from an unknown arrangement of $n$ hyperplanes $\mathcal{H}_1,\dots,\mathcal{H}_n$ of $\mathbb{R}^D$, Tsakiris \& Vidal showed in \cite{tsakiris2017hyperplane} that fitting a hyperplane $\mathcal{H}$ to the points $\mathscr{X}$ so that the sum of the distances of the points to $\mathcal{H}$ is minimized, can be seen as a discrete version of a Steiner-Weber problem in the projective space $\mathbb{P}^{D-1}$ under the sine distance. Inspired by that work, we are here concerned with the following special case:

\begin{problem}[Fermat-Torricelli points of a triangle in the projective plane] \label{problem:Fermat-Torricelli}
Let $\P^2$ be the real projective plane. For $P_1,P_2\in\P^{2}$ let $\varphi_{P_1P_2}$ be the smallest angle between the lines $\ell_{P_1}$ and $\ell_{P_2}$ through the origin in $\mathbb{R}^3$, represented by $P_1$ and $P_2$ respectively, and set $d_{\P^{2}}(P_1,P_2)=\sin\varphi_{P_1P_2}$.
Given non-collinear points $A,B,C \in \mathbb{P}^{2}$, characterize the solutions $\mathscr{P}_{ABC} \subset \P^2$ to the problem
$$ \min_{P \in \mathbb{P}^{2}} \, \, \, \J_{ABC}(P)=\dP(A,P)+\dP(B,P)+\dP(C,P). $$ 
\end{problem}

Within the Fermat-Torricelli and Steiner-Weber literature, Problem \ref{problem:Fermat-Torricelli} represents a natural progression from planar to spherical to projective triangles. The main contribution of this paper is a remarkable property of the Fermat-Torricelli point of a triangle in the projective plane, as defined in Problem \ref{problem:Fermat-Torricelli}, placing it on one or more of the vertices of the triangle, providing that $\varphi_{AB}, \, \varphi_{AC}, \, \varphi_{BC}$ are all greater than $60^\circ$. In particular, under the said condition, the Fermat-Torricelli point is unique if and only if there is a unique side of the triangle of maximal length. Embedded in our proof is another interesting result, a complete characterization of the equilateral case. More precisely: 

\begin{theorem} \label{thm:theorem}
Let $A,B,C \in \P^2$ be ordered such that $\varphi_{AB}\le\varphi_{AC}\le\varphi_{BC}$. 
\begin{enumerate}
\item If $60^\circ \le \varphi_{AB}\le \varphi_{AC}<\varphi_{BC}$, then $\mathscr{P}_{ABC}=\{A\}$. 
\item If $60^\circ \le \varphi_{AB}< \varphi_{AC}=\varphi_{BC}$, then $\mathscr{P}_{ABC}=\{A,B\}$. 
\item If $\varphi_{AB}=\varphi_{AC}=\varphi_{BC}=\varphi$, denote by $E$ the point of $\P^2$ that represents the centroid $\ell_E$ in $\mathbb{R}^3$ of the three equiangular lines $\ell_A,\ell_B,\ell_C$ associated to $A,B,C$; see Figure \ref{fig:equiangular-case}. We have:
\begin{enumerate}
\item If $\varphi > 60^\circ$, then $\mathscr{P}_{ABC}=\{A,B,C\}$.
\item If $\varphi = 60^\circ$, then $\mathscr{P}_{ABC}=\{A,B,C,E\}$.
\item If $\varphi < 60^\circ$, then $\mathscr{P}_{ABC}=\{E\}$. 
\end{enumerate}
\end{enumerate}
\end{theorem}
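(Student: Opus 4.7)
The plan is two-stage. First, settle the equilateral case (3) by direct computation together with a symmetry-driven enumeration of critical points. Then derive the asymmetric cases (1) and (2) via a continuous deformation to the equilateral endpoint.

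\emph{Setup.} I would identify $\P^2=\Sp^2/\{\pm 1\}$, lift $A,B,C$ to unit vectors $\mathbf{a},\mathbf{b},\mathbf{c}$, and note that for $\mathbf{p}\in\Sp^2$ representing $P$,
\[
\J_{ABC}(P)=\sqrt{1-(\mathbf{p}^T\mathbf{a})^2}+\sqrt{1-(\mathbf{p}^T\mathbf{b})^2}+\sqrt{1-(\mathbf{p}^T\mathbf{c})^2}.
\]
This descends to a continuous function on $\P^2$, smooth except at the three vertices, where $\sqrt{1-x^2}$ has infinite slope at $x=\pm 1$.

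\emph{Equilateral case.} Fix signs so that $\mathbf{a}\cdot\mathbf{b}=\mathbf{b}\cdot\mathbf{c}=\mathbf{a}\cdot\mathbf{c}=\cos\varphi$ and set $\mathbf{e}=(\mathbf{a}+\mathbf{b}+\mathbf{c})/\|\mathbf{a}+\mathbf{b}+\mathbf{c}\|$. A short computation gives $\mathbf{a}\cdot\mathbf{e}=\sqrt{(1+2\cos\varphi)/3}$, whence
\[
\J_{ABC}(A)=2\sin\varphi=4\sin(\varphi/2)\cos(\varphi/2),\qquad \J_{ABC}(E)=\sqrt{6-6\cos\varphi}=2\sqrt{3}\sin(\varphi/2),
\]
so $\J_{ABC}(A)\lessgtr\J_{ABC}(E)$ precisely as $\varphi\lessgtr 60^{\circ}$. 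To finish part (3) it suffices to enumerate all minimizer candidates. The non-smooth points are the vertices, and a one-sided directional-derivative check at $A$ yields $\partial_{\mathbf{v}}\J_{ABC}(A)=1-\cos\varphi_{AB}(\mathbf{v}^T\mathbf{u}_B)-\cos\varphi_{AC}(\mathbf{v}^T\mathbf{u}_C)$, where $\mathbf{u}_B,\mathbf{u}_C$ are the unit tangent directions at $A$ along the great circles to $B,C$; consequently $A$ is a (strict) local minimum iff $\|\cos\varphi_{AB}\mathbf{u}_B+\cos\varphi_{AC}\mathbf{u}_C\|\le 1$, which holds whenever $\varphi_{AB},\varphi_{AC}\ge 60^{\circ}$ and the triangle is non-degenerate. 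Smooth critical points satisfy the Lagrange equation
\[
\sum_{i\in\{A,B,C\}}\frac{\mathbf{p}^T\mathbf{a}_i}{\sqrt{1-(\mathbf{p}^T\mathbf{a}_i)^2}}\bigl(\mathbf{a}_i-(\mathbf{p}^T\mathbf{a}_i)\mathbf{p}\bigr)=\mathbf{0}.
\]
I would parametrize $\Sp^2$ with $\mathbf{e}$ at the north pole and the $\mathbf{a}_i$ equally spaced on a parallel, then use the $S_3$-action to reduce the search to a fundamental domain and show directly that $\pm\mathbf{e}$ are the only smooth solutions, by a one-variable analysis along each transposition-invariant great circle. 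Combining the critical-point enumeration with the value comparison above yields 3(a)--3(c).

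\emph{Asymmetric cases via deformation.} For parts (1) and (2) I would build a continuous path $(A,B_t,C_t)_{t\in[0,1]}$ in $\P^2$ from the given triangle at $t=0$ to an equilateral one at $t=1$ with all angles $\ge 60^{\circ}$, chosen so that the ordering $\varphi_{AB_t}\le\varphi_{AC_t}\le\varphi_{B_tC_t}$ and the bound $\varphi_{AB_t}\ge 60^{\circ}$ are preserved for every $t$. Three facts then transport the conclusion from $t=1$ back to $t=0$: (i) by the first-order test, $A$ is a strict local minimum of $\J_{AB_tC_t}$ for every $t$, since $\cos\varphi_{AB_t},\cos\varphi_{AC_t}\le 1/2$ and the triangle stays non-degenerate; (ii) the set-valued map $t\mapsto\mathscr{P}_{AB_tC_t}$ is upper semicontinuous; and (iii) a continuation argument on the smooth Lagrange equation, seeded by the equilateral classification at $t=1$, rules out the appearance of new smooth minima along the path. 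A final vertex-value comparison using the assumed ordering selects $\{A\}$ in case (1); in case (2) the path can be chosen to respect the $A\leftrightarrow B$ swap symmetry, yielding the symmetric pair $\{A,B\}$.

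\emph{Main obstacle.} The hardest step is the classification of smooth critical points in the equilateral case: the Lagrange system couples the three inner products $\mathbf{p}^T\mathbf{a}_i$ nonlinearly through the factors $\sqrt{1-(\mathbf{p}^T\mathbf{a}_i)^2}$, and excluding spurious solutions along each transposition-invariant great circle requires a careful explicit analysis that must be uniform in $\varphi$. The deformation step in (1)--(2) is softer but will still demand a careful justification of the upper-semicontinuity and of the continuation of the critical set, together with a verification that no new minimum emerges from the vertex set along the path.
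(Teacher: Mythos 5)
Your high\mbox{-}level skeleton (equilateral case by symmetry plus computation, then a deformation to reach the asymmetric cases) matches the paper's, and your value comparison $\J_{ABC}(A)=2\sin\varphi$ versus $\J_{ABC}(E)=2\sqrt{3}\sin(\varphi/2)$ is correct. But there are two genuine gaps. First, in the equilateral case, nothing you propose actually confines the minimizers to the transposition\mbox{-}invariant great circles: the $S_3$ fundamental domain on $\Sp^2$ is two\mbox{-}dimensional, and a critical point off the reflection planes is perfectly consistent with the symmetry (its orbit would simply have six elements). The needed statement is that every minimizer $p$ satisfies $|a^\top p|=|b^\top p|$ up to permutation; this is the paper's Lemma \ref{lem:y1-y2-y3}, proved by exhibiting $(1-Z)(Y_1-Y_2)(Y_2-Y_3)(Y_3-Y_1)(Y_1+Y_2+Y_3)$ as an explicit combination of the gradient equations in a polynomial ring --- a nontrivial algebraic identity, not a formal consequence of symmetry. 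Moreover, your claim that $\pm\mathbf{e}$ are the only smooth critical points is false: the restriction of $\J$ to a symmetry great circle is a continuous function on a circle, non\mbox{-}smooth only at $\pm c$, so it has smooth local maxima, and by reflection\mbox{-}invariance these are genuine critical points of $\J$ on $\Sp^2$. What you actually need, and what the paper establishes by explicit discriminant computations for the one\mbox{-}variable function \eqref{eq:Jw}, is that the \emph{minimum} of the restriction sits at $c$ or at $e$ according to the sign of $\varphi-60^\circ$; comparing $\J(A)$ with $\J(E)$ alone does not yield this.

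Second, the continuation argument for parts (1) and (2) is not sound as described. Upper semicontinuity of $t\mapsto\mathscr{P}_{AB_tC_t}$ controls limits of minimizers, which is the wrong direction: it does not prevent the global minimizer from jumping away from $A$ at some interior $t^*$ when the value of another local minimum crosses $\J_t(A)$. To exclude this you would have to track the entire smooth critical set through possible bifurcations \emph{and} compare the values of all persisting local minima against $\J_t(A)$ uniformly in $t$ --- which is essentially the original classification problem over again, and for which you give no method. The paper's ``deformation'' is of a different nature: for a putative minimizer $p\ne a$ of the isosceles triangle it builds a $p$\mbox{-}dependent auxiliary vertex $c_p$ on the arc from $c$ toward the equilateral position and proves the two inequalities \eqref{ineq:greater-difference} and \eqref{ineq:Jp-greater-than-Ja} (at least one strict), whose sum gives $\J_{abc}(p)>\J_{abc}(a)$ outright; the general case is then settled by comparison with two fixed isosceles triangles $ABC'$ and $AB'C$. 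You would need either to adopt such a pointwise comparison or to supply a concrete mechanism replacing step (iii); as written, that step is a placeholder for the hard part of the proof.
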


\tdplotsetmaincoords{30}{75}

\pgfmathsetmacro{\vlong}{5/sqrt(27)}
\pgfmathsetmacro{\vshort}{1/sqrt(27)}

\def\vmlong{0.8165}
\def\vmshort{0.4082}

\def\vax{\vlong}\def\vay{\vshort}\def\vaz{\vshort}
\def\vby{\vlong}\def\vbx{\vshort}\def\vbz{\vshort}
\def\vcz{\vlong}\def\vcx{\vshort}\def\vcy{\vshort}

\pgfmathsetmacro{\vpx}{1/sqrt(3)}
\pgfmathsetmacro{\vpy}{1/sqrt(3)}
\pgfmathsetmacro{\vpz}{1/sqrt(3)}

\pgfmathsetmacro{\vplong}{\vlong*(\vpx*\vax+\vpy*\vay+\vpz*\vaz)}
\pgfmathsetmacro{\vpshort}{\vshort*(\vpx*\vax+\vpy*\vay+\vpz*\vaz)}
\def\vaex{\vplong}\def\vaey{\vpshort}\def\vaez{\vpshort}
\def\vbey{\vplong}\def\vbex{\vpshort}\def\vbez{\vpshort}
\def\vcez{\vplong}\def\vcex{\vpshort}\def\vcey{\vpshort}

\tdplotsetmaincoords{30}{75}

\pgfmathsetmacro{\vlong}{10/sqrt(27)}
\pgfmathsetmacro{\vshort}{2/sqrt(27)}
\pgfmathsetmacro{\vmid}{2/sqrt(3)}

\def\vmlong{0.8165}
\def\vmshort{0.4082}

\def\vax{\vlong}\def\vay{\vshort}\def\vaz{\vshort}
\def\vby{\vlong}\def\vbx{\vshort}\def\vbz{\vshort}
\def\vcz{\vlong}\def\vcx{\vshort}\def\vcy{\vshort}

\pgfmathsetmacro{\vpx}{1/sqrt(3)}
\pgfmathsetmacro{\vpy}{1/sqrt(3)}
\pgfmathsetmacro{\vpz}{1/sqrt(3)}


\begin{figure}[ht]
    \centering
    \tdplotsetmaincoords{30}{75}

    \def\vex{\vmid}\def\vey{\vmid}\def\vez{\vmid}
    \begin{tikzpicture}[scale=1.6,tdplot_main_coords]

        \coordinate (O) at (0,0,0);
        \draw[dashed] (0,0,0) -- (\vax,\vay,\vaz) node (A)[anchor=north east]{$A$};
        \draw[dashed] (0,0,0) -- (\vbx,\vby,\vbz) node (B)[anchor=south west]{$B$};
        \draw[dashed] (0,0,0) -- (\vcx,\vcy,\vcz) node (C)[anchor=south east]{$C$};
        \draw[dashed] (0,0,0) -- (\vex,\vey,\vez) node (E)[anchor=north west]{$E$};

        \draw[dashed] (\vax,\vay,\vaz) -- (1.2*\vax,1.2*\vay,1.2*\vaz) node[anchor=north west]{$l_A$};
        \draw[dashed] (\vbx,\vby,\vbz) -- (1.2*\vbx,1.2*\vby,1.2*\vbz) node[anchor=west]{$l_B$};
        \draw[dashed] (\vcx,\vcy,\vcz) -- (1.6*\vcx,1.6*\vcy,1.6*\vcz) node[anchor=south]{$l_C$};
        \draw[dashed] (\vex,\vey,\vez) -- (1.6*\vex,1.6*\vey,1.6*\vez) node[anchor=north west]{$l_E$};
        
        \node[draw=none,rotate = 70] at (1.5864,0.8612,0.8612) {$\parallel$}; 
        \node[draw=none,rotate = 120] at (0.8165,0.8165,1.6330) {$\parallel$}; 
        \node[draw=none,rotate = 30] at (0.8165,1.6330,0.8165) {$\parallel$}; 

        \node[draw=none,rotate = 105] at (1.3765,0.4588,1.3765) {$\mid$}; 
        \node[draw=none,rotate = 0] at (0.4588,1.3765,1.3765) {$\mid$}; 
        \node[draw=none,rotate = 45] at (1.3765,1.3765,0.4588) {$\mid$}; 

        \tdplotdefinepoints(0,0,0)(\vax,\vay,\vaz)(\vbx,\vby,\vbz)
        \tdplotdrawpolytopearc[thick]{2}{}{} 
        
        \tdplotdefinepoints(0,0,0)(\vax,\vay,\vaz)(\vcx,\vcy,\vcz)
        \tdplotdrawpolytopearc[thick]{2}{}{}

        \tdplotdefinepoints(0,0,0)(\vax,\vay,\vaz)(\vcx,\vcy,\vcz)
        \tdplotdrawpolytopearc[thick]{0.5}{anchor = north west}{$\varphi$}

        \tdplotdefinepoints(0,0,0)(\vbx,\vby,\vbz)(\vcx,\vcy,\vcz)
        \tdplotdrawpolytopearc[thick]{2}{}{}

        \tdplotdefinepoints(0,0,0)(\vax,\vay,\vaz)(\vex,\vey,\vez)
        \tdplotdrawpolytopearc[thick]{2}{}{}

        \tdplotdefinepoints(0,0,0)(\vbx,\vby,\vbz)(\vex,\vey,\vez)
        \tdplotdrawpolytopearc[thick]{2}{}{}

        \tdplotdefinepoints(0,0,0)(\vcx,\vcy,\vcz)(\vex,\vey,\vez)
        \tdplotdrawpolytopearc[thick]{2}{}{}
    \end{tikzpicture}
     \caption{An equilateral triangle $\triangle ABC$ in $\P^2$ or equivalently an equiangular arrangement of three lines through the origin $\ell_A, \ell_B, \ell_C$ in $\mathbb{R}^3$. The angle of the arrangement is $\varphi$ and $E$ is the centroid.}
    \label{fig:equiangular-case}
\end{figure}
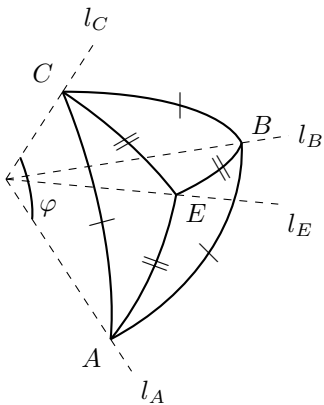

\begin{remark} \label{remark:Afsari}
If instead one considers in Problem \ref{problem:Fermat-Torricelli} the angular distance in $\P^2$, that is $\varphi_{P_1 P_2}$ for $P_1,P_2 \in \P^2$, then one can view $\P^2$ as a complete Riemannian manifold and the results of Afsari \cite{afsari2011riemannian} and Arnaudon \& Miclo \cite{arnaudon2014means} apply. In particular, it follows from Theorem 1 in \cite{afsari2011riemannian}, that the Fermat-Torricelli point of a projective triangle $\triangle ABC$ is unique, as soon as the lines $\ell_A, \, \ell_B, \, \ell_C$ pass through the interior of a spherical cap of the unit sphere of radius equal to $45^\circ$. Under the angular distance, uniqueness also follows from \cite{arnaudon2014means}, whenever $A,B,C$ are randomly chosen under the Lebesque measure on $\P^2 \times \P^2 \times \P^2$. 

For $P_1,P_2 \in \P^n$, \cite{chen2021search} used the distance $\sin\varphi_{P_1P_2}$ to study line packings of low-coherence in $\mathbb{R}^{n+1}$, known as Grassmannian frames. Packings in more general Grassmannians were studied in \cite{conway1996packing}, where it was observed that the sine distance produced more interesting packings than the angular distance. 

With regards to the Fermat-Torricelli problem in the projective plane, it is interesting to compare the effect of the distance choice on the location of the Fermat-Torricelli point. We leave such a study to future endeavors. 

\end{remark}

The paper is organized as follows. In \S \, \ref{section:first-properties} we reduce Problem \ref{problem:Fermat-Torricelli} to the unit sphere $\Sp^2 \subset \mathbb{R}^3$ and establish basic properties that will be used repeatedly in the sequel. In  \S \, \ref{section:big-triangles} we discuss a special type of projective triangles, that we call \emph{big triangles}, and prove that for such triangles the Fermat-Torricelli point is always a vertex. In  \S \, \ref{section:equiangular} we prove part (3) of Theorem \ref{thm:theorem}, the equilateral case. This is used in \S \, \ref{section:two-equal-angles} together with a deformation argument to prove part (2) of Theorem \ref{thm:theorem}, the isosceles case. Finally, \S \, \ref{section:general-case} uses these two cases to prove part (1) of Theorem \ref{thm:theorem}, the general case. 

The statement of the equilateral case was already contained in the expository paper \cite{tsakiris2017hyperplane}. A proof has only been published in the PhD thesis \cite{tsakiris2017dual}; the proof here is an improved version. We are grateful to Bijan Afsari for valuable discussions on geometric medians and thank Miklos P\'alfia for a useful discussion regarding this problem. Finally, we are grateful to the anonymous referee, whose scrutinous reading and constructive comments helped in improving the manuscript.


\section{Reduction to the Sphere and First Properties} \label{section:first-properties}

Let $A,B$ be two points in the real projective plane $\P^2$ and let $a,b$ be points on the unit sphere $\Sp^2$ of $\R^3$ that respectively represent $A,B$. Denote by $\ell_A$ the line in $\R^3$ that passes through the origin and $a$ and similarly for $\ell_B$. We denote by $\varphi_{ab}$ the angle between the vectors $a,b$ and by $\varphi_{AB} \le 90^\circ$ the smallest angle between $\ell_A$ and $\ell_B$. With $a^\top b$ the standard inner product of $a$ and $b$, we have that $\varphi_{AB}=\arccos(|a^\top b|)$ and $\varphi_{ab}=\arccos(a^\top b)$. Hence, the function $\dP:\P^2 \times \P^2 \rightarrow \R_{\ge 0}$ that appears in Problem \ref{problem:Fermat-Torricelli} induces a function $\dS:\Sp^2 \times \Sp^2 \rightarrow \R_{\ge 0}$ given by $$\dS(a,b) =\sqrt{1-(a^\top b)^2}= \sin \varphi_{AB}=\dP(A,B). $$ \noindent Note that $\dS(a,b)$ is the Euclidean distance of the point $a$ to the line $\ell_B$. 

Problem \ref{problem:Fermat-Torricelli} can be equivalently cast as a problem on the sphere
\begin{align}
    \min_{p \in \Sp^2} \, \, \, \J_{abc}(p) := \dS(a,p) + \dS(b,p) + \dS(c,p), \label{eq:FT-sphere}
\end{align} where now $c \in \Sp^2$ represents $C \in \P^2$. Following the assumption in Problem \ref{problem:Fermat-Torricelli}, the points $a,b,c$ are linearly independent. 

The function $\dS$ also satisfies the triangle inequality, and thus $d_{\mathbb{P}^2}$ is a genuine distance function (but $\dS$ is not, in view of $\dS(a,-a)=0$). 
 
\begin{lemma} \label{lem:triangle-inequality}
For any $v_1,v_2,v_3\in\Sp^2$ we have $\dS(v_1,v_2) \le \dS(v_1,v_3)+\dS(v_2,v_3)$, with the equality achieved if and only if $v_3=\pm v_1$ or $v_3=\pm v_2$.
\end{lemma}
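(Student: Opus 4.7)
The plan is to represent $\dS$ in terms of cross products in $\R^3$ and reduce the claim to a standard vector inequality. Since $v_1, v_2, v_3$ are unit vectors, $\dS(v_i, v_j) = \sin\varphi_{v_iv_j} = \|v_i \times v_j\|$, so the target inequality becomes $\|v_1 \times v_2\| \le \|v_1 \times v_3\| + \|v_3 \times v_2\|$.

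To prove this, I would decompose $v_2$ along $v_3$ and its orthogonal complement: $v_2 = (v_2^\top v_3)\, v_3 + v_2^\perp$ with $v_2^\perp \perp v_3$ and $\|v_2^\perp\| = \sin\varphi_{v_2v_3}$. By bilinearity of the cross product,
\[ v_1 \times v_2 \;=\; (v_2^\top v_3)(v_1 \times v_3) + (v_1 \times v_2^\perp). \]
Applying the triangle inequality for the Euclidean norm in $\R^3$, then $|v_2^\top v_3| \le 1$ together with $\|v_1 \times v_2^\perp\| \le \|v_1\|\,\|v_2^\perp\|$, yields
\[ \|v_1 \times v_2\| \;\le\; |v_2^\top v_3|\, \|v_1 \times v_3\| + \|v_1 \times v_2^\perp\| \;\le\; \|v_1 \times v_3\| + \|v_2^\perp\|, \]
which is the desired inequality.

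For the equality analysis, if equality holds throughout then, in particular, $|v_2^\top v_3|\,\|v_1 \times v_3\| = \|v_1 \times v_3\|$, forcing $|v_2^\top v_3| = 1$ or $\|v_1 \times v_3\| = 0$, i.e., $v_3 = \pm v_2$ or $v_3 = \pm v_1$. Conversely, if $v_3 = \pm v_1$, then the first summand in the decomposition vanishes and $v_2^\perp \perp v_1$ by construction, so every inequality above becomes an equality; the case $v_3 = \pm v_2$ is similar, as then $v_2^\perp = 0$. I expect the main obstacle to be identifying the right decomposition of $v_2$; splitting along $v_3$ is precisely what makes the two terms in the upper bound collapse to $\sin\varphi_{v_1v_3}$ and $\sin\varphi_{v_2v_3}$ rather than to something weaker like $\|v_1 - v_3\| = 2\sin(\varphi_{v_1v_3}/2)$.
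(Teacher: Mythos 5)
Your proof is correct, and it takes a genuinely different route from the paper's. The paper argues geometrically: it first treats the case $v_3\in\vspan(v_1,v_2)$ by partitioning the unit circle into arcs at $\pm v_1,\pm v_2$ and invoking subadditivity of $\sin$ on $[0,\pi]$ (equality iff one sub-arc degenerates), and then handles $v_3\notin\vspan(v_1,v_2)$ by projecting $\ell_{v_3}$ orthogonally onto the plane and showing the objective strictly decreases. Your argument replaces all of this with the single identity $\dS(v_i,v_j)=\|v_i\times v_j\|$, the decomposition $v_2=(v_2^\top v_3)v_3+v_2^\perp$, and bilinearity of the cross product; one chain of inequalities then covers every position of $v_3$ with no case split, and the equality analysis is clean because equality of the sums forces $|v_2^\top v_3|\,\|v_1\times v_3\|=\|v_1\times v_3\|$ term by term, which immediately gives $v_3=\pm v_2$ or $v_3=\pm v_1$. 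What your version buys is brevity and uniformity: it avoids both the arc bookkeeping and the ``elementary arguments'' the paper leaves implicit in the projection step (that $|v_1^\top v_3'|>|v_1^\top v_3|$). What the paper's version buys is geometric content that it reuses: the projection-onto-a-plane idea reappears almost verbatim in Lemma \ref{lem:projections}, so proving the triangle inequality that way doubles as a warm-up for the later reflection arguments. Both proofs are complete; yours is the one I would prefer as a standalone lemma.
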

\begin{proof}
If $v_1,v_2$ are collinear then the statement is trivial. So assume otherwise and let $\mathcal{H}$ be the plane $\vspan(v_1,v_2)$. Let $\varphi_{ij}=\arccos(v_i^\top v_j)$ for $i< j \in \{1,2,3\}$. Suppose first that $v_3\in \mathcal{H}$. The points $v_1,-v_1,v_2,-v_2$ partition the unit circle $\Sp^1 \subset \mathcal{H}$ into four arcs. The point $v_3$ lies in one of these arcs, say $\alpha$, whose endpoints we denote by $\bar{v}_1 \in \{v_1,-v_1\}$ and $\bar{v}_2 \in \{v_2,-v_2\}$. The length of $\alpha$ is either $\varphi_{12}$ or $\pi - \varphi_{12}$. The point $v_3$ partitions $\alpha$ into two arcs $\alpha_1$ and $\alpha_2$ with respective endpoints $\bar{v_1},v_3$ and $v_3, \bar{v_2}$. The length of $\alpha_1$ is either $\varphi_{13}$ or $\pi - \varphi_{13}$. Similarly, the length of $\alpha_2$ is either $\varphi_{23}$ or $\pi - \varphi_{23}$. Now,
\begin{align*}
    \dS(v_1,v_2) &= \sin (\varphi_{12}) \\
    &= \sin (\operatorname{length}(\alpha)) \\  
    &= \sin(\operatorname{length}(\alpha_1)+\operatorname{length}(\alpha_2)) \\
    &\le \sin(\operatorname{length}(\alpha_1)) + \sin(\operatorname{length}(\alpha_2)) \\
    & = \sin (\varphi_{13}) + \sin(\varphi_{23}) \\
    & = \dS(v_1,v_3) + \dS(v_2,v_3).
\end{align*}
The equality is achieved if and only if $\operatorname{length}(\alpha_1)=0$ or $\operatorname{length}(\alpha_2)=0$, which means $v_3=\bar{v}_1$ or $v_3=\bar{v}_2$. This proves the statement for $v_3 \in \mathcal{H}$.

To finish, consider any $v_3 \in \Sp^2 \setminus \mathcal{H}$ and let $\ell_{v_3}$ be the line in $\R^3$ spanned by $v_3$. Let $\pi_{\mathcal{H}}(\ell_{v_3})$ be the orthogonal projection of $\ell_{v_3}$ onto the plane $\mathcal{H}$. Let $v_3'$ be any of the intersection points of $\pi_{\mathcal{H}}(\ell_{v_3})$ with the unit circle $\Sp^1 \subset \mathcal{H}$. By elementary arguments one sees that $|v_1^\top v_3'| > |v_1^\top v_3|$ and $|v_2^\top v_3'| > |v_2^\top v_3|$. Hence 
$$\dS(v_1,v_3) + \dS(v_2,v_3) > \dS(v_1,v_3') + \dS(v_2,v_3').$$ By what we proved in the first paragraph, we have 
$$\dS(v_1,v_3') + \dS(v_2,v_3') \ge \dS(v_1,v_2),$$ which concludes the proof.
\end{proof}

Next, we establish some first properties of the set $\mathscr{P}_{abc}$ of solutions to problem \ref{eq:FT-sphere}, which we will need towards proving Theorem \ref{thm:theorem}.

\begin{lemma} \label{lem:distance-to-vertices}
Let $p\in \mathscr{P}_{abc}$ and $v_1,v_2$ two distinct points in $\{a,b,c\}$. Then $$\dS(v_1,p)\le \dS(v_1,v_2).$$
\end{lemma}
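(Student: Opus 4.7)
The plan is to use the optimality of $p$ by comparing it against the trivial candidate $v_2 \in \Sp^2$, and then absorb the extra terms using Lemma \ref{lem:triangle-inequality}. Without loss of generality write $\{v_1,v_2,v_3\}=\{a,b,c\}$.

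First I would exploit that $p \in \mathscr{P}_{abc}$ gives $\J_{abc}(p) \le \J_{abc}(v_2)$. Since $\dS(v_2,v_2)=0$, this expands to
\begin{equation*}
    \dS(v_1,p) + \dS(v_2,p) + \dS(v_3,p) \;\le\; \dS(v_1,v_2) + \dS(v_3,v_2).
\end{equation*}
Next, I would apply Lemma \ref{lem:triangle-inequality} to $v_3,v_2,p$ to obtain $\dS(v_3,v_2) \le \dS(v_3,p)+\dS(v_2,p)$. Substituting this into the previous inequality, the terms $\dS(v_3,p)$ and $\dS(v_2,p)$ cancel on both sides, leaving $\dS(v_1,p) \le \dS(v_1,v_2)$, which is what we want.

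This is essentially a one–line computation once the minimizing property and the spherical triangle inequality are in hand, so I do not anticipate any real obstacle; the only care needed is to make sure the comparison point used is $v_2$ rather than $v_1$, so that the $\dS(v_1,v_2)$ term appears on the right-hand side with the correct sign. No additional case analysis (such as whether $v_3 \in \vspan(v_1,v_2)$) is necessary, because Lemma \ref{lem:triangle-inequality} has already handled all configurations.
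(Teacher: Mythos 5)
Your proof is correct and is essentially identical to the paper's: both compare $\J_{abc}(p)$ against $\J_{abc}(v_2)$ and then invoke Lemma \ref{lem:triangle-inequality} in the form $\dS(v_2,v_3)\le \dS(v_2,p)+\dS(v_3,p)$ to cancel the unwanted terms. No gaps.
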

\begin{proof}
Write $\{v_3\} = \{a,b,c\} \setminus \{v_1,v_2\}$. By Lemma \ref{lem:triangle-inequality} we have $$\dS(v_2,p)+\dS(v_3,p)\ge \dS(v_2,v_3).$$
Since $p\in\mathscr{P}_{abc}$, we have
$$\dS(v_1,p)+\dS(v_2,p)+\dS(v_3,p) \le \dS(v_1,v_2)+\dS(v_2,v_3),$$
and necessarily $\dS(v_1,p)\le \dS(v_1,v_2)$.
\end{proof}

As in the proof of Lemma \ref{lem:triangle-inequality}, let us continue using $\mathcal{H}$ for the plane $\vspan(a,b)$ and $\pi_{\mathcal{H}}$ for the orthogonal projection of $\R^3$ onto $\mathcal{H}$.

\begin{lemma} \label{lem:projections}
Let $p\in \mathscr{P}_{abc}$ and set $p_{\mathcal{H}} = \pi_{\mathcal{H}}(p)$ and $p_{\mathcal{H}^\perp}=p-p_{\mathcal{H}}$. Then $$(c^\top p_{\mathcal{H}}) (c^\top p_{\mathcal{H}^\perp}) \ge 0.$$
\end{lemma}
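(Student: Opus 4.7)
The plan is to exploit the optimality of $p$ by testing it against its mirror image across the plane $\mathcal{H}$. Concretely, define $p' = p_{\mathcal{H}} - p_{\mathcal{H}^\perp}$, the reflection of $p$ through $\mathcal{H}$. Since $p_{\mathcal{H}}$ and $p_{\mathcal{H}^\perp}$ are orthogonal, $\|p'\|^2 = \|p_{\mathcal{H}}\|^2 + \|p_{\mathcal{H}^\perp}\|^2 = \|p\|^2 = 1$, so $p' \in \Sp^2$ is a legitimate competitor in problem \eqref{eq:FT-sphere}.

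Next, I would exploit the fact that $a, b \in \mathcal{H}$, which forces $a^\top p_{\mathcal{H}^\perp} = b^\top p_{\mathcal{H}^\perp} = 0$. This gives $a^\top p' = a^\top p_{\mathcal{H}} = a^\top p$ and similarly $b^\top p' = b^\top p$, so the distances to $a$ and $b$ are unchanged: $\dS(a,p') = \dS(a,p)$ and $\dS(b,p') = \dS(b,p)$. On the other hand, expanding $c^\top p$ and $c^\top p'$ as $c^\top p_{\mathcal{H}} \pm c^\top p_{\mathcal{H}^\perp}$ and squaring yields the identity
\[
(c^\top p)^2 - (c^\top p')^2 \;=\; 4\,(c^\top p_{\mathcal{H}})(c^\top p_{\mathcal{H}^\perp}).
\]

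Finally, because $p$ is a minimizer of $\J_{abc}$ and the $a$- and $b$-terms agree, we must have $\dS(c,p') \ge \dS(c,p)$, equivalently $(c^\top p')^2 \le (c^\top p)^2$. Combined with the displayed identity, this is exactly the asserted inequality $(c^\top p_{\mathcal{H}})(c^\top p_{\mathcal{H}^\perp}) \ge 0$. There is no serious obstacle here; the only thing to check carefully is that the reflection really does preserve $\Sp^2$-membership and that the orthogonal decomposition interacts cleanly with the inner products $a^\top p$ and $b^\top p$, which it does by construction of $\mathcal{H} = \vspan(a,b)$.
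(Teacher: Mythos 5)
Your proposal is correct and follows essentially the same route as the paper: both reflect $p$ across $\mathcal{H}$ to get a competitor $p'$ with unchanged distances to $a$ and $b$, and then use minimality of $p$ to force $(c^\top p')^2 \le (c^\top p)^2$, which is equivalent to the claimed sign condition. The paper phrases this as a contradiction while you use the identity $(c^\top p)^2 - (c^\top p')^2 = 4(c^\top p_{\mathcal{H}})(c^\top p_{\mathcal{H}^\perp})$ directly, but the argument is the same.
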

\begin{proof}
For the sake of a contradiction suppose that $(c^\top p_{\mathcal{H}}) (c^\top p_{\mathcal{H}^\perp}) < 0$. Let $p'=p_{\mathcal{H}}-p_{\mathcal{H}^\perp}\in\Sp^2$ be the reflection of $p$ with respect to $\mathcal{H}$. Then $a^\top p'=a^\top p$, $b^\top p'=b^\top p$ and 
$$|c^\top p'|=|c^\top p_{\mathcal{H}}-c^\top p_{\mathcal{H}^\perp}|>|c^\top p_{\mathcal{H}}+c^\top p_{\mathcal{H}^\perp}|=|c^\top p|.$$ Therefore, $\dS(a,p')=\dS(a,p)$, $\dS(b,p')=\dS(b,p)$ and $\dS(c,p')<\dS(c,p)$, that is $\J_{abc}(p')<\J_{abc}(p)$. This contradicts the fact that $p\in \mathscr{P}_{abc}$. 
\end{proof}

\begin{lemma} \label{lem:inner-products-of-normal}
Let $p\in \mathscr{P}_{abc}$ and let $\{v_1,v_2,v_3\}=\{a,b,c\}$. Let $n \in \R^3$ be a non-zero normal vector to the plane $\vspan(v_1,v_2)$. Then $v_3^\top p$ and $(v_3^\top n)(p^\top n)$ are both either non-negative or non-positive.
\end{lemma}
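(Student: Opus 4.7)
The plan is to reduce the claim directly to Lemma \ref{lem:projections} by making the orthogonal decomposition of $p$ with respect to $\mathcal{H}=\vspan(v_1,v_2)$ and the line $\R n$ explicit, and then chasing signs.

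First I would observe that Lemma \ref{lem:projections}, although written for the specific labeling $\mathcal{H}=\vspan(a,b)$ with the ``third'' vertex $c$, is invariant under any relabeling of $\{a,b,c\}$: the reflection across $\mathcal{H}$ used in its proof preserves the inner products of $p$ with the two vertices spanning $\mathcal{H}$ and can only increase (in absolute value) the inner product with the remaining vertex, so optimality of $p$ yields the same conclusion no matter how the vertices are named. Applying this to our labeling gives $(v_3^\top p_\mathcal{H})(v_3^\top p_{\mathcal{H}^\perp})\ge 0$, where $p_\mathcal{H}=\pi_\mathcal{H}(p)$ and $p_{\mathcal{H}^\perp}=p-p_\mathcal{H}$. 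Equivalently, the two numbers $v_3^\top p_\mathcal{H}$ and $v_3^\top p_{\mathcal{H}^\perp}$ are both nonnegative or both nonpositive.

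Next I would identify the two quantities in the statement with this pair. Since $p_{\mathcal{H}^\perp}$ lies on the line $\R n$, one has $p_{\mathcal{H}^\perp}=(p^\top n/\|n\|^2)\,n$, so
\[
v_3^\top p_{\mathcal{H}^\perp}=\frac{(v_3^\top n)(p^\top n)}{\|n\|^2},
\]
which shows that $(v_3^\top n)(p^\top n)$ shares its sign with $v_3^\top p_{\mathcal{H}^\perp}$. At the same time, the decomposition $p=p_\mathcal{H}+p_{\mathcal{H}^\perp}$ gives
\[
v_3^\top p = v_3^\top p_\mathcal{H} + v_3^\top p_{\mathcal{H}^\perp},
\]
a sum of two summands of a common sign, and therefore itself of that sign.

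Combining these observations finishes the proof: when $v_3^\top p_{\mathcal{H}^\perp}\ge 0$ both $v_3^\top p$ and $(v_3^\top n)(p^\top n)$ are nonnegative, and when $v_3^\top p_{\mathcal{H}^\perp}\le 0$ both are nonpositive. There is no substantial obstacle; the lemma is essentially a repackaging of Lemma \ref{lem:projections} using a normal vector $n$ in place of the component $p_{\mathcal{H}^\perp}$, and the only mildly nontrivial point is the symmetric extension of Lemma \ref{lem:projections} to an arbitrary labeling of the vertices.
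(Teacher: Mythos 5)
Your proof is correct and follows essentially the same route as the paper's: apply Lemma \ref{lem:projections} to get the common sign of $v_3^\top p_{\mathcal{H}}$ and $v_3^\top p_{\mathcal{H}^\perp}$, identify $v_3^\top p_{\mathcal{H}^\perp}$ with $(v_3^\top n)(p^\top n)/\|n\|^2$, and read off the sign of the sum $v_3^\top p = v_3^\top p_{\mathcal{H}} + v_3^\top p_{\mathcal{H}^\perp}$. The only cosmetic difference is that the paper cases on the sign of $v_3^\top p$ and deduces the sign of the summands, whereas you argue in the forward direction; both are valid, and your explicit remark on the relabeling symmetry fills in what the paper leaves implicit.
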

\begin{proof}
It is enough to prove the statement for $v_1=a, \, v_2=b, \, v_3=c$. 
By Lemma \ref{lem:projections}, $(c^\top p_{\mathcal{H}}) (c^\top p_{\mathcal{H}^\perp})\ge 0$. If $c^\top p = c^\top p_{\mathcal{H}} + c^\top p_{\mathcal{H}^\perp} \ge 0$, we have $c^\top p_{\mathcal{H}}\ge 0$ and $c^\top p_{\mathcal{H}^\perp} \ge 0$. Moreover $p_{\mathcal{H}^\perp}=(p^\top \hat{n}) \hat{n}$, where $\hat{n} = n/\|n\|_2$. Taking inner product of both sides of this equality with $c$, we get $c^\top p_{\mathcal{H}^\perp}  = (c^\top \hat{n})(p^\top \hat{n})$. Thus $(c^\top n)(p^\top n)\ge 0$. The case $c^\top p \le 0$ follows similarly. 
\end{proof}

\begin{lemma} \label{lem:non-positive-non-negative}
Let $p\in \mathscr{P}_{abc}$ and suppose that $a^\top b,\, a^\top c,\, b^\top c$ are all positive. Then $a^\top p, \, b^\top p, \, c^\top p$ are either all non-positive or all non-negative.
\end{lemma}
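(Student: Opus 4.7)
My plan is to argue by contradiction and produce a second-order descent direction for $\J_{abc}$ from $p$. As a first reduction, Lemma \ref{lem:distance-to-vertices} gives $|u^\top p| \ge u^\top v > 0$ for every pair $u \ne v$ in $\{a,b,c\}$, so none of $a^\top p,\, b^\top p,\, c^\top p$ vanishes. If the conclusion fails the signs are mixed; since $\J_{abc}(-p) = \J_{abc}(p)$ and the hypothesis is symmetric in $(a,b,c)$, I may replace $p$ by $-p$ and relabel to reach the canonical bad configuration $a^\top p > 0,\ b^\top p > 0,\ c^\top p < 0$. A quick case check rules out $p \in \{\pm a, \pm b, \pm c\}$, so $\J_{abc}$ is smooth at $p$.

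The decisive quantitative input comes from the Lagrange condition. Stationarity on $\Sp^2$ gives $\sum_{u \in \{a,b,c\}} \frac{u^\top p}{\dS(u,p)}\,u = \lambda\,p$ with $\lambda := \sum_u (u^\top p)^2/\dS(u,p) > 0$. Taking the inner product with $c$ and moving the $c$-term to the right yields
\[
\frac{(a^\top p)(a^\top c)}{\dS(a,p)} + \frac{(b^\top p)(b^\top c)}{\dS(b,p)} = (c^\top p)\left(\lambda - \frac{1}{\dS(c,p)}\right).
\]
Under the assumed signs and $a^\top c,\,b^\top c > 0$, the left side is strictly positive while $c^\top p < 0$, forcing the strict bound $\lambda < 1/\dS(c,p)$.

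I then test $p$ along the curve $p(t) = (p + tc)/\|p + tc\|$ on $\Sp^2$, whose tangent at $t=0$ is $v = c - (c^\top p)\,p$ with $\|v\|^2 = \dS(c,p)^2$. Stationarity kills the first derivative, and a second-order Taylor expansion --- using the ambient Hessian $-\sum_u uu^\top/\dS(u,p)^3$ together with the non-geodesic acceleration $p''(0) = -2(c^\top p)\,v - \dS(c,p)^2\,p$ --- simplifies to
\[
\ddot{\J}_{abc}(0) = -\frac{(a^\top v)^2}{\dS(a,p)^3} - \frac{(b^\top v)^2}{\dS(b,p)^3} - \dS(c,p)\bigl[1 - \lambda\,\dS(c,p)\bigr].
\]
By the previous step the bracket is strictly positive, so the last term is strictly negative while the first two are non-positive; hence $\ddot{\J}_{abc}(0) < 0$ and $\J_{abc}(p(t)) < \J_{abc}(p)$ for small $t \ne 0$, contradicting $p \in \mathscr{P}_{abc}$.

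The main obstacle is that first-order stationarity forces the improvement to come at second order, and because $p(t)$ is not a geodesic the curvature correction $\nabla \J_{abc}(p)^\top p''(0) = \lambda\,\dS(c,p)^2$ enters with a positive sign. The strict inequality $\lambda < 1/\dS(c,p)$ extracted from the Lagrange identity is exactly what ensures this correction does not cancel the second-order gain, which is why isolating that inequality from the dotted-with-$c$ relation is the crux of the argument.
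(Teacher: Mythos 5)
Your argument is correct, but it takes a genuinely different route from the paper's. The paper's proof is first-order-free: it writes $p_{\mathcal{H}}=\lambda_a a+\lambda_b b$ for the projection of $p$ onto $\vspan(a,b)$, uses the distance bounds of Lemma \ref{lem:distance-to-vertices} to force $\lambda_a,\lambda_b\ge 0$ (not both zero), concludes $c^\top p_{\mathcal{H}}>0$, and then invokes the reflection argument of Lemma \ref{lem:projections} to get $c^\top p_{\mathcal{H}^\perp}\ge 0$ as well. You instead run a second-order variational argument: after correctly reducing to the configuration $a^\top p>0$, $b^\top p>0$, $c^\top p<0$ (your use of Lemma \ref{lem:distance-to-vertices} to exclude vanishing inner products, and of $\J_{abc}(-p)=\J_{abc}(p)$ plus relabelling, is fine, as is the exclusion of $p\in\{\pm a,\pm b,\pm c\}$), you extract $\lambda<1/\dS(c,p)$ from the stationarity identity dotted with $c$, and then show $\ddot{\J}_{abc}(0)<0$ along $p(t)=(p+tc)/\|p+tc\|$. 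I checked the computations: $p''(0)=-2(c^\top p)v-\dS(c,p)^2p$, the ambient Hessian $-\sum_u uu^\top/\dS(u,p)^3$, the contribution $-\dS(c,p)$ from the $c$-term of $v^\top Hv$, and the curvature correction $+\lambda\dS(c,p)^2$ all come out as you state, so the bracket $1-\lambda\dS(c,p)>0$ indeed yields a strict second-order decrease and the desired contradiction. What each approach buys: the paper's argument is elementary and never needs differentiability of $\J_{abc}$ (it would apply verbatim at vertices), whereas yours leans on the full critical-point equation of Lemma \ref{lem:zero-gradient} and an explicit Hessian computation; in exchange, yours isolates a clean quantitative consequence of stationarity ($\lambda<1/\dS(c,p)$ whenever one inner product has the opposite sign) that the paper's projection argument does not make visible.
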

\begin{proof}

At least two of $a^\top p, \, b^\top p, \, c^\top p$ are both non-negative or non-positive. Without loss of generality we assume that $a^\top p$ and $b^\top p$ are non-negative. We may assume that not both of them are zero, since otherwise we are done. We will prove that $c^\top p \ge 0$.

In the notation of Lemma \ref{lem:projections} we have $p_{\mathcal{H}}=\lambda_a a+\lambda_b b$ for some $\lambda_a, \lambda_b \in \R$. We will prove that $\lambda_a, \lambda_b \ge 0$. By Lemma \ref{lem:distance-to-vertices} we have 
\begin{align*}
&1 \ge a^\top p_{\mathcal{H}} = \lambda_a + \lambda_b \, a^\top b =a^\top p\ge a^\top b \\
&1 \ge b^\top p_{\mathcal{H}} = \lambda_a \, a^\top b + \lambda_b =b^\top p\ge a^\top b,
\end{align*} from which we extract
\begin{align*}
& 1\ge \lambda_a+\lambda_b \, a^\top b \ge a^\top b\\
&a^\top b \le \lambda_a \, a^\top b + \lambda_b \le 1.
\end{align*}
Subtracting the first row of inequalities from the second row we get
$$ a^\top b-1\le(1-a^\top b)(\lambda_b-\lambda_a)\le 1-a^\top b, $$
which in turn gives $-1\le\lambda_b-\lambda_a\le 1$. Combining the inequalities $a^\top b \le \lambda_a + \lambda_b \, a^\top b$ and $\lambda_b \le 1 + \lambda_a$ we have 
\begin{align} 
a^\top b \le \lambda_a + \lambda_b \, a^\top b \le \lambda_a +(1+\lambda_a) a^\top b, \label{eq:lambda-a-b}
\end{align} \noindent from which we read $(1+a^\top b) \lambda_a \ge 0$ and thus $\lambda_a \ge 0$. Similarly, we have $\lambda_b \ge 0$. 

As a consequence,
$$c^\top p_{\mathcal{H}}=\lambda_a \, a^\top c+\lambda_b \, b^\top c\ge 0.$$
Since by hypothesis $a^\top b>0$, \eqref{eq:lambda-a-b} gives that not both $\lambda_a$ and $\lambda_b$ are zero. Thus the hypothesis $a^\top c, \, b^\top c>0$ gives $c^\top p_{\mathcal{H}} > 0$. Now Lemma \ref{lem:projections} gives $ c^\top p_{\mathcal{H}^\perp} \ge 0$, whence $c^\top p = c^\top p_{\mathcal{H}}+c^\top p_{\mathcal{H}^\perp} > 0$.
\end{proof}

Next, we develop a first order optimality condition. For convenience, with $p \in \Sp^2$ we define a function $\tau_p: \Sp^2 \setminus \{\pm p\} \rightarrow \R^3$ as 
$$\tau_p(x)=\frac{(I-pp^\top)x}{\|(I-pp^\top)x\|_2}=\frac{(I-pp^\top)x}{\sqrt{1-(p^\top x)^2}},$$ where $I$ is the $3 \times 3$ identity matrix. The condition is:

\begin{lemma} \label{lem:zero-gradient}
    If $p\in\mathscr{P}_{abc}$ and $p\notin\{\pm a,\pm b,\pm c\}$, then
    $$(a^\top p)\tau_p(a)+(b^\top p)\tau_p(b)+(c^\top p)\tau_p(c)=0$$
\end{lemma}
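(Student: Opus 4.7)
The plan is to recognize the identity as the vanishing of the Riemannian (tangential) gradient of $\J_{abc}$ on $\Sp^2$ at the minimizer $p$, and derive it via a standard Lagrange multiplier argument. The hypothesis $p\notin\{\pm a,\pm b,\pm c\}$ is precisely what ensures that each summand $\dS(v,\cdot)=\sqrt{1-(v^\top \cdot)^2}$, for $v\in\{a,b,c\}$, is smooth in an open Euclidean neighborhood of $p$ in $\R^3$ (since $(v^\top p)^2<1$), so that $\J_{abc}$ is differentiable at $p$ and the standard first-order necessary condition applies.

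First I would compute the Euclidean gradient of each summand. Differentiating $\dS(v,p)=\sqrt{1-(v^\top p)^2}$ with the chain rule yields
$$\nabla_p \dS(v,p) \,=\, -\frac{(v^\top p)\,v}{\sqrt{1-(v^\top p)^2}}.$$
Summing over $v\in\{a,b,c\}$ gives $\nabla \J_{abc}(p)$. Since $p$ minimizes $\J_{abc}$ on the smooth constraint surface $\Sp^2=\{x\in\R^3:x^\top x=1\}$ and $\J_{abc}$ is $C^1$ near $p$, Lagrange multipliers give a scalar $\lambda\in\R$ with $\nabla \J_{abc}(p)=\lambda p$. Equivalently, the orthogonal projection of $\nabla \J_{abc}(p)$ onto the tangent plane $T_p\Sp^2=\{x:p^\top x=0\}$, implemented by the matrix $I-pp^\top$, must vanish.

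Finally, I would apply $I-pp^\top$ to each term in the gradient. Using $(I-pp^\top)v=v-(p^\top v)p$ together with the definition $\tau_p(v)=(I-pp^\top)v/\sqrt{1-(v^\top p)^2}$, the projection of a single summand is exactly $-(v^\top p)\,\tau_p(v)$. Summing yields
$$(I-pp^\top)\nabla \J_{abc}(p)\,=\,-\bigl[(a^\top p)\tau_p(a)+(b^\top p)\tau_p(b)+(c^\top p)\tau_p(c)\bigr],$$
and setting this equal to zero produces the asserted identity. There is no real obstacle here: the proof amounts to unpacking the first-order condition, with the hypothesis on $p$ playing the single role of guaranteeing differentiability of $\J_{abc}$ at the minimizer.
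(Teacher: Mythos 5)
Your proposal is correct and follows essentially the same route as the paper: both compute the Euclidean gradient $-\sum_v (v^\top p)v/\sqrt{1-(v^\top p)^2}$, use the hypothesis $p\notin\{\pm a,\pm b,\pm c\}$ to guarantee smoothness near $p$, and conclude by setting the tangential projection $(I-pp^\top)\nabla\J_{abc}(p)$ to zero at the minimizer. The paper phrases the first-order condition as the vanishing of the Riemannian gradient on the punctured sphere rather than via Lagrange multipliers, but the resulting computation is identical.
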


\begin{proof}
    The formula for $\J_{abc}:\Sp^2 \rightarrow \R$ given in \eqref{eq:FT-sphere} can also be used to define a function $\J_{abc}':\mathcal{C} \rightarrow \R$, where $\mathcal{C}$ is the convex polytope
    $$\mathcal{C}= \left\{\xi \in\R^3 \, \mid \, |a^\top \xi|\le 1, \, |b^\top \xi|\le 1,\, |c^\top \xi|\le 1\right\}.$$
    This function is smooth everywhere except at the boundary of the polytope    
    $$\partial \mathcal{C} = \left\{\xi \in \mathcal{C} \, : \,   |a^\top \xi |= 1\text{ or }|b^\top \xi |= 1\text{ or }|c^\top \xi |= 1\right\}.$$                                                                                                   
    Its gradient at a point $q \in \mathcal{C} \setminus \partial \mathcal{C}$ is given by 
    \begin{align*}
    \nabla_q\J_{abc}'&=
        -\frac{\left(a^\top q\right)a}{\sqrt{1-\left(a^\top q\right)^2}}
        -\frac{\left(b^\top q\right)b}{\sqrt{1-\left(b^\top q\right)^2}}
        -\frac{\left(c^\top q\right)c}{\sqrt{1-\left(c^\top q\right)^2}}.        
    \end{align*} Now, $\Sp^2 \subset \mathcal{C}$ is a smooth Riemannian manifold and the punctured sphere $\Sp^{2}_{abc}:=\Sp^2 \setminus \partial \mathcal{C} \cap \Sp^2 = \Sp^2 \setminus \{\pm a,\pm b,\pm c \} $ is a smooth open submanifold. Hence $\J_{abc}$ is smooth on $\Sp^{2}_{abc}$ with Riemann gradient at a point $q \in \Sp^{2}_{abc}$ given by $$\nabla_q\J_{abc} = (I-qq^\top) \nabla_q\J_{abc}'.$$ Let us call $\J_{abc}''$ the restriction of $\J_{abc}$ to $\Sp^{2}_{abc}$. Since $p \in \mathscr{P}_{abc} \setminus \{\pm a,\pm b,\pm c \}$, it is a global minimum of the smooth function $\J_{abc}''$ and thus basic facts from Riemannian optimization imply that $p$ must satisfy the equation $$ \nabla_p\J_{abc}''= \nabla_p\J_{abc} = (I-pp^\top) \nabla_p\J_{abc}' = 0,$$ which is exactly the relationship in the statement of the lemma.
\end{proof}

\section{Big Triangles} \label{section:big-triangles}

As in the previous section, let $a, b, c \in \Sp^2$ be representatives of $A, B, C \in \P^2$ and observe that the sign of the quantity $(a^\top b)(b^\top c)(a^\top c)$ is independent of the choice of representatives. The following notion will turn out to be convenient:

\begin{definition} \label{dfn:big-triangle}
We call the projective triangle $\triangle ABC$ a big triangle, if $$(a^\top b)(b^\top c)(a^\top c)\le 0.$$
\end{definition}

If the inequality in Definition \ref{dfn:big-triangle} is strict, the corresponding line arrangement was called \emph{coherent} in \cite{taylor1977regular}. Instead, the attribute \emph{big} here is justified by: 

\begin{proposition} \label{prp:"big"->big}
    If $\triangle ABC$ is a big triangle, then $\varphi_{AB}+\varphi_{AC}+\varphi_{BC} > \pi$. 
\end{proposition}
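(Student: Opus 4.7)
The plan is to reduce the statement to the standard spherical triangle inequality applied to suitably chosen representatives of $A,B,C$ on $\Sp^2$.

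First I would observe that the quantity $(a^\top b)(b^\top c)(a^\top c)$ is invariant under replacing any of $a,b,c$ by its antipode (two of its factors change sign under each such flip). So I can freely choose the signs of the representatives. Concretely, start with any $a,b,c\in \Sp^2$ representing $A,B,C$; if $a^\top b<0$, flip $b$; if afterwards $a^\top c<0$, flip $c$. This yields representatives with $a^\top b\ge 0$ and $a^\top c\ge 0$, and since the triple product is still $\le 0$, we must have $b^\top c\le 0$.

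Next I would translate between $\varphi_{XY}=\arccos|x^\top y|\in[0,\pi/2]$ and $\varphi_{xy}=\arccos(x^\top y)\in[0,\pi]$. Under the sign choice just fixed, $\varphi_{ab}=\varphi_{AB}$, $\varphi_{ac}=\varphi_{AC}$, while $\varphi_{bc}=\pi-\varphi_{BC}$ because $b^\top c\le 0$. Now I would invoke the standard triangle inequality for the intrinsic (great-circle) metric on $\Sp^2$:
\begin{equation*}
\varphi_{bc}\;\le\;\varphi_{ab}+\varphi_{ac}.
\end{equation*}
Substituting yields $\pi-\varphi_{BC}\le \varphi_{AB}+\varphi_{AC}$, i.e.\ $\varphi_{AB}+\varphi_{AC}+\varphi_{BC}\ge \pi$.

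To upgrade this to a strict inequality, I would use the fact that equality in the spherical triangle inequality $\varphi_{bc}\le\varphi_{ab}+\varphi_{ac}$ forces $a$ to lie on the minor great-circle arc from $b$ to $c$ (or for $b,c$ to be antipodal), in either case making $a,b,c\in\R^3$ linearly dependent. But the hypothesis of Problem \ref{problem:Fermat-Torricelli} is that $A,B,C$ are non-collinear in $\P^2$, which means $a,b,c$ are linearly independent in $\R^3$. Hence the inequality is strict and $\varphi_{AB}+\varphi_{AC}+\varphi_{BC}>\pi$.

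There is no real obstacle here: the only delicate point is making the sign reduction explicit, after which everything is routine. The equality case is handled cleanly by the linear independence of $a,b,c$ that is built into the standing assumption on the triangle.
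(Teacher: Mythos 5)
Your strategy coincides with the paper's: normalize the signs of the representatives so that $a^\top b\ge 0$, $a^\top c\ge 0$, $b^\top c\le 0$, translate $\varphi_{bc}=\pi-\varphi_{BC}$, apply the great-circle triangle inequality $\varphi_{bc}\le\varphi_{ab}+\varphi_{ac}$, and rule out equality via the linear independence of $a,b,c$. That is exactly the paper's argument, and it is correct in the main case.

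There is, however, one step that fails as written. From $a^\top b\ge 0$, $a^\top c\ge 0$ and $(a^\top b)(a^\top c)(b^\top c)\le 0$ you conclude ``we must have $b^\top c\le 0$''; this inference is invalid when $a^\top b=0$ or $a^\top c=0$, because then the triple product vanishes regardless of the sign of $b^\top c$. This degenerate case is genuinely contained in Definition \ref{dfn:big-triangle}, whose inequality is not strict: for instance $a=(1,0,0)$, $b=(0,1,0)$, $c=(\epsilon,\delta,\sqrt{1-\epsilon^2-\delta^2})$ with small $\epsilon,\delta>0$ is a big triangle with $b^\top c>0$. In that situation your chain of inequalities gives no lower bound on $\varphi_{AB}+\varphi_{AC}+\varphi_{BC}$, and the needed bound $\varphi_{AC}+\varphi_{BC}>\pi/2$ is not automatic from $\varphi_{AB}=\pi/2$ alone; it requires another application of the spherical triangle inequality, this time around the vertex $c$ (choose the sign of $c$ so that $a^\top c\ge 0$ and argue according to the sign of $b^\top c$). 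The paper avoids this by first imposing the ordering $\varphi_{AB}\le\varphi_{AC}\le\varphi_{BC}$ and disposing of the case $\varphi_{AC}=\pi/2$ at the outset (then $\varphi_{BC}=\pi/2$ as well and the sum already exceeds $\pi$ since $\varphi_{AB}>0$); in the remaining case $a^\top b$ and $a^\top c$ are strictly positive and your deduction becomes valid. With that boundary case added, your proof is complete and identical in substance to the paper's.
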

\begin{proof}
Recall our convention $\varphi_{AB}\le \varphi_{AC}\le \varphi_{BC}$. If $\varphi_{AC}=\pi/2$, then $\varphi_{BC}=\pi/2$ and we are done. We may thus assume $\varphi_{AB}\le\varphi_{AC}<\pi/2$. We may also assume that $a^\top b = \cos\varphi_{AB}$ and $a^\top c= \cos\varphi_{AC}$. Hence $a^\top b, \, a^\top c>0$ and so necessarily $b^\top c\le 0$. With $v_1, v_2 \in \Sp^2$ we denote by $\varphi_{v_1v_2}$ the angle between the vectors $v_1,v_2$. We have $\varphi_{bc}=\pi-\varphi_{BC}$ and the triangular inequality for arc lengths
$$\varphi_{ab}+\varphi_{ac}\ge \varphi_{bc},$$ gives the inequality in the statement with $\ge 0$. 
Note that the equality is achieved if and only if $a \in \Arc{bc}$, which would imply that $a,b,c$ are coplanar. 
\end{proof} 

The main result of this section is: 

\begin{proposition} \label{prp:big-triangle}
    If $\triangle ABC$ is a big triangle, then $\mathscr{P}_{ABC}\subset\{A,B,C\}$.
\end{proposition}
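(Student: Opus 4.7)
I plan to work on the sphere via the reformulation \eqref{eq:FT-sphere} and argue by contradiction. Suppose $p \in \mathscr{P}_{abc}$ with $p \notin \{\pm a, \pm b, \pm c\}$. Since the objective and the first-order condition of Lemma \ref{lem:zero-gradient} are invariant under flipping the sign of any of $a, b, c$ or of $p$, I normalize representatives so that $\alpha := a^\top p$, $\beta := b^\top p$, $\gamma := c^\top p$ are all non-negative. Lemma \ref{lem:distance-to-vertices} then gives the lower bounds
\[
\alpha \ge \max(|a^\top b|, |a^\top c|), \quad \beta \ge \max(|a^\top b|, |b^\top c|), \quad \gamma \ge \max(|a^\top c|, |b^\top c|),
\]
so, barring degenerate configurations, $\alpha, \beta, \gamma \in (0, 1)$ and the unit tangent vectors $\tau_p(a), \tau_p(b), \tau_p(c) \in T_p\Sp^2$ are well defined. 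The big-triangle condition $(a^\top b)(b^\top c)(a^\top c) \le 0$ is preserved by single-vector sign flips, so in the new normalization it asserts that an odd number of the three inner products $a^\top b$, $a^\top c$, $b^\top c$ is non-positive. By symmetry of the proposition in the vertices, the two cases are ``exactly one non-positive'' (WLOG $b^\top c \le 0$) and ``all three non-positive''.

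Lemma \ref{lem:zero-gradient} in the tangent plane at $p$ reads
\[
\alpha\,\tau_p(a) + \beta\,\tau_p(b) + \gamma\,\tau_p(c) \,=\, 0,
\]
and the strict positivity of $\alpha, \beta, \gamma$ forces the origin of $T_p\Sp^2 \cong \R^2$ into the interior of the convex hull of $\{\tau_p(a), \tau_p(b), \tau_p(c)\}$. The pairwise tangent inner products are given by
\[
\tau_p(u)^\top \tau_p(v) \,=\, \frac{u^\top v - (u^\top p)(v^\top p)}{\sqrt{1-(u^\top p)^2}\,\sqrt{1-(v^\top p)^2}},
\]
so the sign of $u^\top v$ together with the non-negativity of $(u^\top p)(v^\top p) = \alpha\beta$ (etc.) determines the sign of $\tau_p(u)^\top \tau_p(v)$ whenever $u^\top v \le 0$. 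The sign information inherited from the big-triangle hypothesis therefore becomes sign information on the cosines of the three spherical angles at $p$ in the sub-triangles of $\{a, b, c, p\}$.

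My strategy in each case is then the following. In the case $a^\top b, a^\top c, b^\top c \le 0$, all three tangent cosines are strictly negative, so each of the spherical angles at $p$ exceeds $\pi/2$; I will compare $\J_{abc}(p)$ directly with $\J_{abc}$ at the best vertex by using the balance equation to eliminate $p$ intrinsically, together with the lower bounds on $\alpha,\beta,\gamma$ from Lemma \ref{lem:distance-to-vertices}. In the one-negative case $b^\top c \le 0$, only the spherical angle at $p$ opposite $\overline{bc}$ is forced to exceed $\pi/2$, and the tangent-plane geometry alone will not close the argument; my plan is to reduce to a great-circle analysis in $\mathrm{span}(b, c)$ via the reflection/projection technique underlying Lemmas \ref{lem:projections} and \ref{lem:inner-products-of-normal}, exploiting the key geometric feature that $b^\top c \le 0$ forces the arc from $b$ to $c$ on the ``$a$-side'' of $\mathrm{span}(b, c)$ to span more than a quarter circle. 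The main obstacle I anticipate is making this reduction rigorous, since the orthogonal projection of $p$ onto $\mathrm{span}(b, c)$ need not have unit norm and the induced objective on the great circle only bounds $\J_{abc}(p)$ from below; controlling the trade-off between the shortening of $\dS(b,\cdot), \dS(c,\cdot)$ and the behavior of $\dS(a,\cdot)$ under this projection will be the crux.
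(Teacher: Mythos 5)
Your proposal is a plan rather than a proof, and the case you yourself flag as the crux --- exactly one non-positive inner product, say $b^\top c\le 0$ --- is precisely where the real work lies; you leave it open. Two concrete gaps. First, you never order the vertices by their distance to $p$, and because of that you miss the key structural fact the argument needs: it is not enough that \emph{some} pair among $a,b,c$ has non-positive inner product; one must arrange that the non-positive pair consists of the two vertices \emph{farther} from $p$. The paper's proof fixes the nearest vertex $a$ (so that $a^\top p\ge\max\{b^\top p,c^\top p\}$, whence $a^\top p>1/\sqrt2$ by Lemma \ref{lem:a-top-p}), and then Lemma \ref{lem:nonpositive-b-top-c} shows, via a non-trivial reflection of $p$ with respect to the decomposition along $a-b$, that representatives can be chosen with $b^\top c\le 0$ \emph{in addition to} $a^\top p,\,b^\top p,\,c^\top p\ge 0$ and \eqref{eq:big-triangle-hypothesis}. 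Your ``WLOG $b^\top c\le 0$'' silently assumes this compatibility, which is not automatic: after normalizing signs so that all three inner products with $p$ are non-negative, you have no remaining freedom to choose \emph{which} pair carries the non-positive sign, and the bad pair could a priori involve the nearest vertex.

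Second, once $b^\top c\le 0$ is secured for the far pair, the contradiction does not come from a projection onto $\vspan(b,c)$ (whose trade-off against $\dS(a,\cdot)$ you correctly identify as uncontrolled, and do not control), but from a norm estimate in the tangent plane: writing the balance equation of Lemma \ref{lem:zero-gradient} as $(a^\top p)\tau_p(a)=-u$ with $u=(b^\top p)\tau_p(b)+(c^\top p)\tau_p(c)$, the inequality $\tau_p(b)^\top\tau_p(c)\le -(b^\top p)(c^\top p)$ (which you do state) combines with $a^\top p>1/\sqrt2$ and $a^\top p\ge\max\{b^\top p,c^\top p\}$ to give $\|u\|_2<a^\top p=\|(a^\top p)\tau_p(a)\|_2$, which is impossible. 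Without the ordering of the vertices and the $1/\sqrt2$ bound this estimate is unavailable; your ``all three non-positive'' case is likewise only sketched (``eliminate $p$ intrinsically'' is not an argument), although there the paper's computation would apply verbatim since $b^\top c\le 0$ holds automatically. A smaller issue: the claim that $\alpha,\beta,\gamma>0$ ``barring degenerate configurations'' is unjustified when some of $a^\top b,\,a^\top c,\,b^\top c$ vanish, and your parity-based case split does not exhaust the possibilities once zeros are allowed in the product $(a^\top b)(b^\top c)(a^\top c)\le 0$.
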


Towards proving this fact, we let $p$ be a representative of $P \in\mathscr{P}_{ABC}$ and assume without loss of generality that $a,b,c$ are such that $a^\top p, \,  b^\top p, \, c^\top p\ge 0$. Again without loss of generality we assume 
\begin{align}
a^\top p\ge \max\{b^\top p,c^\top p\} \, \, \, \Leftrightarrow \, \, \, \dS(a,p)\le \min \{ \dS(b,p),\dS(c,p) \}. \label{eq:big-triangle-hypothesis}
\end{align} 

\begin{lemma} \label{lem:a-top-p}
We have $a^\top p > 1/\sqrt{2}$; in particular $\varphi_{ap}=\varphi_{AP} <\pi/4$.
\end{lemma}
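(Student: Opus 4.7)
My plan is to bound $\dS(a,p)$ above by an explicit constant strictly less than $1/\sqrt{2}$, using the optimality of $p$ together with the ordering hypothesis. The starting observation is that on $[0,1]$ the map $t \mapsto \sqrt{1-t^2}$ is strictly decreasing, so the normalization $a^\top p \ge \max\{b^\top p, c^\top p\} \ge 0$ translates into the reverse ordering on sine distances, $\dS(a,p) \le \dS(b,p)$ and $\dS(a,p) \le \dS(c,p)$. Adding these two inequalities to $\dS(a,p) = \dS(a,p)$ yields $\J_{abc}(p) \ge 3\,\dS(a,p)$.

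Next, I would exploit that $p$ is a minimizer of $\J_{abc}$ by comparing its value to $\J_{abc}$ evaluated at the vertex $a$:
\[
\J_{abc}(p) \;\le\; \J_{abc}(a) \;=\; \dS(a,b) + \dS(a,c) \;=\; \sin\varphi_{AB} + \sin\varphi_{AC} \;\le\; 2,
\]
where the last inequality just uses that each sine is at most $1$. Chaining with the previous paragraph gives the key estimate $\dS(a,p) \le 2/3$.

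To close, I would invoke the elementary numerical fact $(2/3)^2 = 4/9 < 1/2$, which upgrades the previous line to $\dS(a,p)^2 = 1-(a^\top p)^2 < 1/2$. Since $a^\top p \ge 0$ by the sign normalization, this forces the strict inequality $a^\top p > 1/\sqrt{2}$, equivalently $\varphi_{ap} = \varphi_{AP} < \pi/4$, as claimed. I do not anticipate a genuine obstacle here; it is perhaps worth remarking that the \emph{big triangle} hypothesis is not explicitly used in this particular lemma, entering only to justify the ambient setup and the WLOG sign normalization (the latter being consistent with Definition \ref{dfn:big-triangle} because $(a^\top b)(a^\top c)(b^\top c)$ is invariant under sign flips of any of $a,b,c$).
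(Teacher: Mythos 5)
Your proof is correct and is essentially the paper's argument: both rest on the chain $3\,\dS(a,p) \le \J_{abc}(p) \le \J_{abc}(a) = \sin\varphi_{AB}+\sin\varphi_{AC} \le 2$, the paper merely packaging it as a contradiction (assuming $(a^\top p)^2 \le 1/2$ forces $\J_{abc}(p) \ge 3\sqrt{2}/2 > 2$) while you run it directly to get $\dS(a,p) \le 2/3$ and conclude from $4/9 < 1/2$. Your closing remark that the big-triangle hypothesis is not needed here, only the sign normalization and the ordering \eqref{eq:big-triangle-hypothesis}, is also accurate.
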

\begin{proof}
    If $(a^\top p)^2\le 1/2$, then $\dS(a,p)\ge \sqrt{2}/{2}$. By the assumption 
    $$\J_{abc}(p)\ge 3\dS(a,p) \ge \frac{3}{2}\sqrt{2}>2\ge \sin\varphi_{AB}+\sin\varphi_{AC}=\J_{abc}(a),$$
    which contradicts the fact that $p\in\mathscr{P}_{abc}$.
\end{proof}

\begin{lemma} \label{lem:nonpositive-b-top-c}
    There is a choice of representatives additionally satisfying $b^\top c\le 0$.
\end{lemma}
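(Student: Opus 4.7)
My plan is to construct the required representatives by a sign flip of $c$, using the big triangle condition as the driving ingredient.

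First, I handle the trivial case: if $b^\top c \le 0$ already holds, the initial choice of representatives works and there is nothing to prove. Otherwise, assume $b^\top c > 0$. The big triangle condition $(a^\top b)(a^\top c)(b^\top c) \le 0$ then forces $(a^\top b)(a^\top c) \le 0$. Since the setup is symmetric in $B$ and $C$ --- the property $a^\top p \ge \max\{b^\top p, c^\top p\}$ is preserved under swapping their labels --- I may assume without loss of generality that $a^\top b \ge 0$ and $a^\top c \le 0$. I then replace the representative $c$ by $-c$, which is still a lift of $C \in \mathbb{P}^2$. In the new triple $(a, b, -c)$, the inner product $a^\top b$ is unchanged (and $\ge 0$), $a^\top(-c) = -a^\top c \ge 0$, and $b^\top(-c) = -b^\top c < 0$, so the desired inequality $b^\top c \le 0$ holds.

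The main obstacle is reconciling this sign flip with the convention $c^\top p \ge 0$ from the setup, since after the flip $(-c)^\top p = -c^\top p$ may become strictly negative. The degenerate case $c^\top p = 0$ makes the flip harmless, so the substantive question is whether the bad configuration $b^\top c > 0$ with $c^\top p > 0$ strict can actually occur under the other constraints. The plan to rule it out is to combine several facts: Lemma \ref{lem:distance-to-vertices} yields $c^\top p \ge b^\top c > 0$ and $c^\top p \ge |a^\top c|$; a Cauchy--Schwarz argument in the tangent space at $p$ (applied to $a^\top c = (a^\top p)(c^\top p) + a_T^\top c_T$ with $a^\top c \le 0 \le (a^\top p)(c^\top p)$) yields the bound $(a^\top p)^2 + (c^\top p)^2 \le 1$; combined with Lemma \ref{lem:a-top-p}'s estimate $a^\top p > 1/\sqrt{2}$, this forces $c^\top p < 1/\sqrt{2}$ and severely restricts the geometry. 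The first-order optimality condition of Lemma \ref{lem:zero-gradient} is then expected to complete the derivation of a contradiction in the bad case, so that the sign flip is available whenever $b^\top c > 0$ and the resulting representatives continue to satisfy all the prior conditions.
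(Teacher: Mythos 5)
Your reduction to the ``bad case'' is set up correctly, but the proof has a genuine gap exactly where the lemma's real content lies. You rightly observe that when $b^\top c>0$ the big-triangle condition forces (after swapping $B$ and $C$ if needed) $a^\top b\ge 0$ and $a^\top c\le 0$, and you rightly observe that the naive fix of replacing $c$ by $-c$ destroys the standing convention $c^\top p\ge 0$ (note that $c^\top p=0$ is in fact impossible here: Lemma \ref{lem:distance-to-vertices} would give $\dS(c,b)\ge\dS(c,p)=1$, i.e.\ $b^\top c=0$). So the lemma reduces entirely to showing that the configuration $b^\top c>0$, $a^\top b\ge 0$, $a^\top c\le 0$ is incompatible with optimality of $p$ --- and this you do not prove. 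The estimates you list (the Cauchy--Schwarz bound $(a^\top p)^2+(c^\top p)^2\le 1$, hence $c^\top p<1/\sqrt{2}$, together with $c^\top p\ge b^\top c$) are correct but are all consistent with one another and do not produce a contradiction by themselves; the appeal to Lemma \ref{lem:zero-gradient} is only an expectation, not an argument, and moreover that lemma is unavailable if $p=\pm a$, a subcase you would have to dispose of separately.

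The paper closes this gap with a zeroth-order (variational) argument rather than a first-order one: it sets $d=a-b$, notes $d^\top p=a^\top p-b^\top p>0$ (handling the equality case $a^\top p=b^\top p$ separately beforehand, where $a^\top b>0$ follows from Lemma \ref{lem:a-top-p} and a relabelling finishes) and $d^\top c=a^\top c-b^\top c<0$, and reflects $p$ across the plane $d^\perp$ to get $p'=p_{d^\perp}-p_d$. Since $d\perp a+b$, this reflection exchanges $a^\top p$ with $b^\top p$ (leaving $\dS(a,\cdot)+\dS(b,\cdot)$ unchanged) while strictly increasing $|c^\top p|$, so $\J_{abc}(p')<\J_{abc}(p)$, contradicting $p\in\mathscr{P}_{abc}$. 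Some such global comparison is needed; to complete your proposal you would have to supply an argument of this kind, not merely the preparatory inequalities.
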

\begin{proof}
Recall the assumption $a^\top p\ge \max\{b^\top p,c^\top p\}$ and suppose first that equality is achieved. So suppose that 
$a^\top p=b^\top p$; the case $a^\top p=c^\top p$ follows similarly. From Lemma \ref{lem:a-top-p}, we know that $\varphi_{ap}=\varphi_{bp}<\pi/4$, and so $\varphi_{ab} \le \varphi_{ap}+\varphi_{bp} < \pi/2$. Hence, $\varphi_{ab}=\varphi_{AB}$ and $a^\top b > 0$. Since the triangle is big, either $a^\top c\le 0$ or $b^\top c\le 0$, and we can choose $c$ such that $b^\top c\le 0$. 

We may thus assume that $a^\top p > \max \{ b^\top p,c^\top p\}$. If $b^\top c \le 0$ we are done, so assume $b^\top c>0$. Since the triangle is big, we have either $a^\top b\ge 0, \, a^\top c\le 0$ or $a^\top b\le 0, \, a^\top c\ge 0$. We consider only the first case; the second case is similar.
    
    By the assumption, $a^\top p > b^\top p \ge 0$. Let $d=a-b$, then $d^\top p > 0$ and $d^\top c = a^\top c-b^\top c < 0$. Let $\V_d=\vspan(d)$ and $\V_{d^\perp}$ the orthogonal complement of $\V_d$. Then we have the decomposition $p=p_d+p_{d^\perp}$, where $p_d\in\V_d$ and $p_{d^\perp}\in\V_{d^\perp}$. Since $(a-b)^\top(a+b)=0$, we have
    \begin{align*}
        a^\top p_d &= (a+b-b)^\top p_d = - b^\top p_d, \\
        a^\top p_{d^\perp} &= (a-b+b)^\top p_{d^\perp} = b^\top p_{d^\perp}.
    \end{align*}
    From $p_d=(dd^\top)p$ we obtain $c^\top p_d =(d^\top c)(d^\top p) < 0$. Now $c^\top p=c^\top p_d+c^\top p_{d^\perp}$ and $c^\top p \ge 0$ by hypothesis, so $c^\top p_{d^\perp}\ge 0$. Let $p'=p_{d^\perp}-p_d$. Then 
    \begin{align*}
        a^\top p' &= a^\top p_{d^\perp} - a^\top p_d = b^\top p_{d^\perp} + b^\top p_d = b^\top p, \\
        b^\top p' &= b^\top p_{d^\perp} - b^\top p_d = a^\top p_{d^\perp} + a^\top p_d = a^\top p, \\
        c^\top p' &= c^\top p_{d^\perp} - c^\top p_d > c^\top p_{d^\perp} + c^\top p_d = c^\top p.
    \end{align*}
    But this implies the contradiction $\J_{abc}(p')<\J_{abc}(p)$.
\end{proof}

We can now finish the proof of Proposition \ref{prp:big-triangle}: 

By Lemma \ref{lem:nonpositive-b-top-c}, we have $b^\top c\le 0$.
    From the assumption $\varphi_{AP}\le \min(\varphi_{BP},\varphi_{CP})$, we have $p\ne\pm b$ and $p\ne\pm c$. Since $a^\top p\ge 0$, we also have $p\ne -a$. We assume that $p\ne a$ and reach a contradiction. Lemma \ref{lem:zero-gradient} gives $$(a^\top p)\tau_p(a)+(b^\top p)\tau_p(b)+(c^\top p)\tau_p(c)=0.$$ With $u=(b^\top p)\tau_p(b)+(c^\top p)\tau_p(c)$ the above equation can be written as $(a^\top p)\tau_p(a) + u = 0$. In what follows we show that $\|u\|_2<a^\top p$; this is a contradiction because by definition $\|\tau_p(a)\|_2 = 1$. From the definition of $u$, we have
    $$u^\top u = (b^\top p)^2 + (c^\top p)^2 + 2(b^\top p)(c^\top p)\tau_p(b)^\top \tau_p(c).$$
Using $b^\top c \le 0$ and the definition of the function $\tau_p$, one easily verifies that $$\tau_p(b)^\top \tau_p(c) \le -(b^\top p)(p^\top c).$$ In turn, this gives
\begin{align}
u^\top u \le (b^\top p)^2 + (c^\top p)^2 - 2(b^\top p)^2(c^\top p)^2 = (b^\top p)^2 + (c^\top p)^2 [1 - 2(b^\top p)^2]. \label{eq:big-triangle-uu^T}
\end{align}
From Lemma \ref{lem:a-top-p}, $(a^\top p)^2>1/2$. If both $(b^\top p)^2 \le 1/2$ and $(c^\top p)^2 \le 1/2$, then
\begin{align*}
    u^\top u \le (b^\top p)^2 + \frac{1}{2}(1 - 2(b^\top p)^2) = \frac{1}{2} < (a^\top p)^2,
\end{align*} and we are done. Suppose then that either $(b^\top p)^2 > 1/2$ or $(c^\top p)^2 > 1/2$. If $(b^\top p)^2 > 1/2$, then
\begin{align*}
    u^\top u \le  (b^\top p)^2 \le (a^\top p)^2,
\end{align*} where the last inequality is due to the hypothesis \eqref{eq:big-triangle-hypothesis}. If equality is achieved everywhere, \eqref{eq:big-triangle-uu^T} gives $c^\top p=0$, and we also have $b^\top p=a^\top p$. Now the triangular inequality (Lemma \ref{lem:triangle-inequality}) gives the contradiction $$\J_{abc}(p)=\dS(a,p)+\dS(b,p)+1>\sin\varphi_{AB}+1 \ge \J_{abc}(a).$$


\section{Equilateral Triangles} \label{section:equiangular}

In this section we will prove part (3) of Theorem \ref{thm:theorem}, which we will use in later sections to prove parts (1) and (2). This is the equilateral case where $\varphi_{AB}=\varphi_{AC}=\varphi_{BC} = \varphi$. If $\triangle ABC$ is a big triangle in the sense of Definition \ref{dfn:big-triangle}, then Proposition \ref{prp:"big"->big} gives $\varphi >60^\circ$. Then Proposition \ref{prp:big-triangle} together with the fact that $\J_{ABC}(A) = \J_{ABC}(B) = \J_{ABC}(C)$ shows that $\mathscr{P}_{ABC} = \{A,B,C\}$. We may thus assume in the rest of this section that the triangle $\triangle ABC$ is not big, that is $$(a^\top b)(a^\top c)(b^\top c)>0,$$ for any representatives $a,b,c$. We may also assume that $b,c$ are conveniently chosen such that $a^\top b =a^\top c =\cos\varphi$. From the above inequality $b^\top c>0$ and so $$a^\top b = a^\top c=b^\top c=\cos\varphi.$$

A first consequence of the symmetry of the configuration is:

\begin{lemma} \label{lem:y1-y2-y3}
Let $p\in \mathscr{P}_{abc}$ and set 
\begin{align*}
x_1=a^\top p, \, \, \,  x_2=b^\top p, \, \, \, x_3=c^\top p, \, \, \,\text{and} \, \, \,   y_i = \sqrt{1-x_i^2}, \, \, \, \text{for} \, \, \,  i=1,2,3.
\end{align*}
Then either $y_1=y_2$ or $y_1=y_3$ or $y_2=y_3$.
\end{lemma}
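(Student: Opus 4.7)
The plan is to apply the first-order optimality condition of Lemma \ref{lem:zero-gradient} and reduce the claim to the strict convexity of the single-variable function $f(x) := x/\sqrt{1-x^2}$ on $[0, 1)$. By Lemma \ref{lem:non-positive-non-negative}, since $a^\top b = a^\top c = b^\top c = \cos\varphi > 0$, the three scalars $x_1, x_2, x_3$ share a common sign; replacing $p$ by $-p$ if necessary (which keeps $p \in \mathscr{P}_{abc}$ and leaves each $y_i$ unchanged), I may assume $x_i \ge 0$. If $p \in \{a, b, c\}$ the claim is immediate: for instance, when $p = a$ one has $x_2 = x_3 = \cos\varphi$, hence $y_2 = y_3$, and the other two cases are symmetric. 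So I assume henceforth that $p \notin \{\pm a, \pm b, \pm c\}$, so that each $x_i \in [0, 1)$ and each $y_i > 0$.

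Rewriting Lemma \ref{lem:zero-gradient} as $(I - pp^\top)\bigl(\tfrac{x_1}{y_1} a + \tfrac{x_2}{y_2} b + \tfrac{x_3}{y_3} c\bigr) = 0$ furnishes a scalar $\lambda \in \R$ with
$$\tfrac{x_1}{y_1}\, a + \tfrac{x_2}{y_2}\, b + \tfrac{x_3}{y_3}\, c = \lambda p.$$
Taking inner products of both sides with $a$, $b$, $c$ in turn and invoking $a^\top b = a^\top c = b^\top c = \cos\varphi$ produces three linear equations. Subtracting them pairwise eliminates the symmetric contribution and leaves, for every $i \neq j$,
$$(1 - \cos\varphi)\bigl(f(x_i) - f(x_j)\bigr) = \lambda(x_i - x_j),$$
where $1 - \cos\varphi > 0$ since the three lines are distinct.

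The proof concludes by splitting on $\lambda$. If $\lambda = 0$, then $f(x_1) = f(x_2) = f(x_3)$; since $f$ is strictly increasing on $[0, 1)$ (from $f'(x) = (1-x^2)^{-3/2} > 0$), the $x_i$ all coincide, hence so do the $y_i$. If $\lambda \ne 0$ and, for contradiction, the three $x_i$ were pairwise distinct, say $x_1 < x_2 < x_3$, then the three secant slopes of $f$ across the intervals $[x_1,x_2]$, $[x_2,x_3]$, $[x_1,x_3]$ would all equal $\lambda/(1-\cos\varphi)$, contradicting strict convexity of $f$ on $[0, 1)$ (from $f''(x) = 3x(1-x^2)^{-5/2} > 0$ on $(0,1)$). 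Hence at least two of the $x_i$ agree, and consequently at least two of the $y_i$ agree. The principal obstacle is spotting that the threefold symmetry collapses the gradient equation into a one-dimensional convexity statement; once $f$ is identified, the remaining steps are routine.
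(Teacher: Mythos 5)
Your proof is correct, and it takes a genuinely different route from the paper's. The paper takes the same three inner-product equations coming from Lemma \ref{lem:zero-gradient}, clears denominators to obtain polynomial relations $F_1,\dots,F_6$, and then exhibits an explicit ideal-membership certificate showing that $(1-Z)(Y_1-Y_2)(Y_2-Y_3)(Y_3-Y_1)(Y_1+Y_2+Y_3)$ lies in the ideal they generate, after which it rules out the factors $1-z$ and $y_1+y_2+y_3$. You instead observe that the stationarity condition says $\sum_i f(x_i)\,v_i \in \vspan(p)$ with $f(x)=x/\sqrt{1-x^2}$, so pairwise differencing of the inner-product equations forces all secant slopes of $f$ between the $x_i$ to equal $\lambda/(1-\cos\varphi)$; strict convexity of $f$ on $[0,1)$ (equivalently, $f'(x)=(1-x^2)^{-3/2}$ strictly increasing) then forbids three distinct abscissae. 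This is shorter, conceptually transparent, and even gives the slightly stronger conclusion that two of the $x_i$ themselves coincide. The one extra ingredient you need, and correctly supply, is the sign normalization via Lemma \ref{lem:non-positive-non-negative} (valid here since $a^\top b=a^\top c=b^\top c=\cos\varphi>0$ in the non-big equilateral setting, and $-p\in\mathscr{P}_{abc}$ whenever $p$ is): $f$ is odd and concave on $(-1,0]$, so the convexity argument genuinely requires all $x_i\ge 0$, whereas the paper's polynomial identity is sign-agnostic. With that point in place, the argument is complete.
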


\begin{proof}
    If $p$ is one of $\pm a,\pm b,\pm c$, then the statement clearly holds, since if say $p=a$, then $y_2 = y_3 = \sin(\varphi)$. So suppose that $p \notin \{\pm a,\pm b,\pm c\}$. Lemma \ref{lem:zero-gradient} gives $$(a^\top p)\tau_p(a)+(b^\top p)\tau_p(b)+(c^\top p)\tau_p(c)=0.$$   
    Setting $z=\cos\varphi$, and taking inner product of both sides of the above equation with $a,b,c$, we respectively obtain
    \begin{align*}
        &\frac{x_1}{y_1} +\frac{x_2}{y_2}z +\frac{x_3}{y_3}z - \bigg(\frac{x_1}{y_1}x_1 +\frac{x_2}{y_2}x_2 +\frac{x_3}{y_3}x_3\bigg)x_1 = 0\\
        &\frac{x_1}{y_1}z +\frac{x_2}{y_2} +\frac{x_3}{y_3}z - \bigg(\frac{x_1}{y_1}x_1 +\frac{x_2}{y_2}x_2 +\frac{x_3}{y_3}x_3\bigg)x_2 = 0\\
        & \frac{x_1}{y_1}z +\frac{x_2}{y_2}z +\frac{x_3}{y_3} - \bigg(\frac{x_1}{y_1}x_1 +\frac{x_2}{y_2}x_2 +\frac{x_3}{y_3}x_3\bigg)x_3 = 0.
    \end{align*} Multiplying the first of the above equations with $y_1 y_2 y_3$ and using the fact that $1-x_1^2 = y_1^2$, gives
$$ x_1 y_1^2 y_2 y_3 + x_2 y_1 y_3(z-x_1x_2)+x_3y_1y_2(z-x_1x_3) = 0. $$ Since by hypothesis $\varphi <90^\circ$, we can cancel $y_1$ to get 
$$ x_1 y_1 y_2 y_3 + x_2 y_3(z-x_1x_2)+x_3y_2(z-x_1x_3) = 0. $$            
We can extract similar polynomial relations from the other two equations. Formally, let $\mathcal{T} = \R[X_1,X_2,X_3,Y_1,Y_2,Y_3,Z]$ be a polynomial ring in seven variables over the real numbers. Then for $i=1,2,3$ the $x_i,y_i$ and $z$ are roots of the following polynomials of $\mathcal{T}$:
        \begin{align*}
    F_1 &=X_1Y_1Y_2Y_3+X_2Y_3(Z-X_1X_2)+X_3Y_2(Z-X_1X_3) \\
    F_2 &=X_1Y_3(Z-X_1X_2)+X_2Y_1Y_2Y_3+X_3Y_1(Z-X_2X_3) \\
    F_3 &=X_1Y_2(Z-X_1X_3)+X_2Y_1(Z-X_2X_3)+X_3Y_1Y_2Y_3 \\
    F_4 &= X_1^2+Y_1^2-1 \\
    F_5 &= X_2^2+Y_2^2-1 \\
    F_6 &= X_3^2+Y_3^2-1
    \end{align*} 

Let $\mathcal{I}$ be the ideal of $\mathcal{T}$ generated by the $F_i, \, i=1,\dots,6$. Define the polynomial $$F=(1-Z)(Y_1-Y_2)(Y_2-Y_3)(Y_3-Y_1)(Y_1+Y_2+Y_3) \in \mathcal{T}.$$ We claim that $F \in \mathcal{I}$, that is, there exist $C_1,\dots,C_6 \in \mathcal{T}$ such that 
\begin{align}    
F = C_1 F_1 + C_2 F_2 + C_3 F_3 + C_4 F_4 + C_5 F_5 + C_6 F_6. \label{eq:F-F_i}
\end{align} Since the $x_i,y_i, z$ are roots of the $F_1,\dots,F_6$, we see from \eqref{eq:F-F_i} that they are roots  of $F$. Hence, $$(1-z)(y_1-y_2)(y_2-y_3)(y_3-y_1)(y_1+y_2+y_3)=0.$$ Now, $\psi >0 $ and so $z < 1$. Also, by definition $y_1,y_2,y_3$ are non-negative and by the linear independence assumption on $a,b,c$ not all of them can be zero. That is, 
    \begin{align*}
        (y_1-y_2)(y_1-y_3)(y_2-y_3) = 0,
    \end{align*} which is exactly the statement of the lemma. 

In what follows we prove the above claim. First, note that 
\begin{align}
F &= Y_1^3Y_2(Z-1)+Y_1Y_2^3(1-Z)+Y_1^3Y_3(1-Z) \nonumber \\
&+Y_2^3Y_3(Z-1)+Y_1Y_3^3(Z-1)+Y_2Y_3^3(1-Z). \nonumber
\end{align}  
It is enough to show that 
\begin{align}
F'&=F_4 [Y_2^3Y_3(Z-1) + Y_2Y_3^3(1-Z)] + F_5[Y_1^3Y_3(1-Z) + Y_1Y_3^3(Z-1)] \nonumber \\  &+F_6[Y_1^3Y_2(Z-1) + Y_1Y_2^3(1-Z)] + F \in \mathcal{I}. \nonumber
\end{align} Replacing $Y_i^3$ by $Y_i(1-X_i^2)$ for each $i=1,2,3$ in $F'$ does not change the class of $F'$ modulo $\mathcal{I}$, hence it is enough to show that
\begin{align}
F'' = &(X_1^2+Y_1^2)Y_2Y_3(Z-1) (X_3^2-X_2^2) + (X_2^2+Y_2^2)Y_1Y_3(Z-1) (X_1^2-X_3^2) + \nonumber \\ &(X_3^2+Y_3^2)Y_1Y_2(Z-1) (X_2^2-X_1^2) \in \mathcal{I}. \nonumber
\end{align} We also write $F'' = (Z-1) F'''$. Now, we observe that 
\begin{align}
&F''' + X_1(Y_3-Y_2)F_1+X_2(Y_1-Y_3)F_2+X_3(Y_2-Y_1)F_3 = \nonumber \\
&Z[X_1X_2Y_3(Y_1-Y_2)+X_1X_3Y_2(Y_3-Y_1)+X_2X_3Y_1(Y_2-Y_3)]\nonumber, 
\end{align} where we denote the factor in the parenthesis on the right-hand-side by $F''''$. Hence, the class of $F'$ modulo $\mathcal{I}$ is the same as the class of $Z(Z-1)F''''$. In fact, already $(Z-1) F'''' \in \mathcal{I}$. To see this, we observe that 
\begin{align}
(Z-1)F'''' = &Y_1(X_2-X_3)F_1+Y_2(X_3-X_1)F_2+Y_3(X_1-X_2)F_3 +\nonumber \\
&X_1Y_2Y_3(X_3-X_2)F_4 + X_2Y_1Y_3(X_1-X_3)F_5+ X_3Y_1Y_2(X_2-X_1)F_6. \nonumber
\end{align} This concludes the proof of Lemma \ref{lem:y1-y2-y3}.
\end{proof}

We are now ready to prove part (3) of Theorem \ref{thm:theorem}:

\begin{proposition}
Set $e = (a+b+c)/\|a+b+c\|_2 \in \Sp^2$ and let $E$ be the point of $\P^2$ represented by $e$. Then $\mathscr{P}_{ABC}$ satisfies the following phase transition:
    \begin{enumerate}
        \item If $\varphi > 60^\circ$, then $\mathscr{P}_{ABC} = \{A,B,C\}$.
        \item If $\varphi = 60^\circ$, then $\mathscr{P}_{ABC} = \{A,B,C, E\}$.
        \item If $\varphi < 60^\circ$, then $\mathscr{P}_{ABC} = \{E\}$.
    \end{enumerate}
    \label{prop:equiangular}
\end{proposition}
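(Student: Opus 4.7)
The plan is to reduce the proposition to a one-dimensional problem on a great circle. If the equilateral triangle is big (Definition \ref{dfn:big-triangle}), then Proposition \ref{prp:"big"->big} forces $\varphi > 60^\circ$ and Proposition \ref{prp:big-triangle} forces $\mathscr{P}_{ABC} \subset \{A,B,C\}$; equality follows from the symmetry $\J_{ABC}(A) = \J_{ABC}(B) = \J_{ABC}(C)$, so part (1) holds in this sub-case. I therefore assume the triangle is non-big, in which case the discussion preceding the proposition gives $a^\top b = a^\top c = b^\top c = \cos\varphi$.

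Fix $p \in \mathscr{P}_{abc}$. Lemma \ref{lem:non-positive-non-negative} allows me to replace $p$ by $-p$ if necessary so that $a^\top p,\, b^\top p,\, c^\top p \geq 0$. Lemma \ref{lem:y1-y2-y3} forces two of $y_1, y_2, y_3$ to coincide; the $S_3$ symmetry of the equiangular configuration (realized by isometries of $\R^3$ permuting $\{a, b, c\}$) lets me further assume $y_2 = y_3$, and hence $b^\top p = c^\top p$. Thus $p$ lies on $\Pi \cap \Sp^2$, where $\Pi = \{v \in \R^3 : (b-c)^\top v = 0\}$, a plane containing both $a$ (since $(b-c)^\top a = 0$) and $e$. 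Parametrize this great circle by $p(\theta) = \cos\theta \cdot a + \sin\theta \cdot f$, with $f \in \Pi$ a unit vector orthogonal to $a$. Then $a^\top p = \cos\theta$ and, after a direct computation, $b^\top p = c^\top p = \cos\varphi\cos\theta + \beta\sin\theta$ with $\beta^2 = (1-\cos\varphi)(1+2\cos\varphi)/2$; using $\cos^2\varphi + \beta^2 = \cos^2(\varphi/2)$ this becomes $b^\top p = \cos(\varphi/2)\cos(\theta - \eta)$ for a specific $\eta \in (0, \pi/2)$. The point $p = a$ corresponds to $\theta = 0$ with $\J(0) = 2\sin\varphi$, and $p = e$ corresponds to $\theta_e$ with $\cos\theta_e = \sqrt{(1+2\cos\varphi)/3}$ and $\J(\theta_e) = 2\sqrt{3}\sin(\varphi/2)$. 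The identity $\J(0) - \J(\theta_e) = 2\sin(\varphi/2)(2\cos(\varphi/2) - \sqrt{3})$ is positive, zero, or negative according as $\varphi < 60^\circ$, $\varphi = 60^\circ$, or $\varphi > 60^\circ$, producing the phase transition.

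It remains to analyze the restricted function $\J(\theta) = |\sin\theta| + 2\sqrt{1 - \cos^2(\varphi/2)\cos^2(\theta - \eta)}$ on the valid subinterval (cut out by $a^\top p, b^\top p \geq 0$) and show its global minimum is attained only at $\theta \in \{0, \theta_e\}$. Lemma \ref{lem:zero-gradient} at any interior minimum gives $d\J/d\theta = 0$; squaring this yields a polynomial equation in $\cos\theta$ admitting $\cos\theta_e$ as a root, and I would factor it out to show the remaining factor has at most one other root in the valid interval, which is necessarily a local maximum of $\J$. The cusp at $\theta = 0$ is a local minimum exactly when $d\J/d\theta|_{0^+} = 1 - 2\beta\cot\varphi > 0$. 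At the two boundaries of the valid interval one of $\dS(a, p)$ or $\dS(b, p)$ equals $1$, giving $\J \geq 2$, which strictly exceeds $\min(\J(0), \J(\theta_e))$ for $\varphi \in (0^\circ, 90^\circ]$. Assembling the contributions from the three cyclically symmetric great circles $\Pi$ (one for each pair of vertices) then produces the sets $\{A, B, C\}$, $\{A, B, C, E\}$, or $\{E\}$, matching parts (1), (2), (3) respectively.

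The main obstacle is the critical-point elimination: a spurious interior critical point of $\J$ does appear numerically (as a local maximum) for $\varphi$ above roughly $53^\circ$, and ruling it out as a candidate minimum requires careful trigonometric sign analysis of $d\J/d\theta$ after factoring out the root $\cos\theta_e$ from the squared derivative equation.
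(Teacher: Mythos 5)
Your reduction is sound and coincides with the paper's own: dispose of the big-triangle case via Propositions \ref{prp:"big"->big} and \ref{prp:big-triangle}, then use Lemmas \ref{lem:non-positive-non-negative} and \ref{lem:y1-y2-y3} to confine $p$ to a great circle equidistant from one pair of vertices, and finish with a one-variable minimization on that circle. Your intermediate formulas all check out: $\beta^2=(1-\cos\varphi)(1+2\cos\varphi)/2$, $\cos^2\varphi+\beta^2=\cos^2(\varphi/2)$, $\cos\theta_e=\sqrt{(1+2\cos\varphi)/3}$, and $\J(0)-\J(\theta_e)=2\sin(\varphi/2)\bigl(2\cos(\varphi/2)-\sqrt{3}\bigr)$, which indeed changes sign exactly at $60^\circ$.

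The gap is the one you name yourself: the elimination of all other minimizers of the restricted function $\J(\theta)$ is only announced (``I would factor it out to show the remaining factor has at most one other root\dots which is necessarily a local maximum''), not carried out, and this is precisely the computational heart of the proposition. Comparing $\J(0)$ with $\J(\theta_e)$ only ranks two candidate points; without the critical-point elimination nothing rules out a third, strictly smaller, interior minimum, and your own numerical observation of a spurious critical point shows the issue is real. The paper closes this step by a different and arguably more robust device: instead of classifying critical points, it parametrizes the circle rationally ($p\propto(1,1,w)$ in adapted coordinates) and proves the global inequality $\J(w)>\J(w_0)$ directly, by squaring twice and exhibiting explicit factorizations $p_2=4(w-\alpha)^2p_3$ and $q_3=4(w-1)^2q_4$, where $p_3$ and $q_4$ are quadratics in $w$ with negative discriminant and positive leading coefficient on the relevant range of $\alpha$, together with the identity $p_2|_{\alpha=4}=93312(w-4)^2(w-1)^2$ at the transition value. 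Your trigonometric parametrization could in principle support an analogous computation, but until the sign analysis of $d\J/d\theta$ (or a direct global comparison) is actually executed, the proof is incomplete. A minor additional point: your boundary estimate $\J\ge 2>\min\bigl(\J(0),\J(\theta_e)\bigr)$ fails at $\varphi=90^\circ$, though that case is big and hence already excluded.
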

\begin{proof}
Recall that by definition $0 < \varphi < 90^\circ$. We can parametrize $a,b,c$ as
\begin{align*}
    a &= \mu\mat{\alpha&1&1}^\top, \\
    b &= \mu\mat{1&\alpha&1}^\top, \\
    c &= \mu\mat{1&1&\alpha}^\top,
\end{align*}
with $\mu := \left(2+\alpha^2\right)^{-\frac{1}{2}}$. The parameter $\alpha$ encodes the value of the angle $\varphi$ as follows:	 
\begin{align*}
    \cos(\varphi)=\frac{1+2\alpha}{2+\alpha^2}, \, \, \, \, \, \,   \alpha = \frac{1}{\cos\varphi} \pm \sqrt{\frac{1}{\cos^{2}\varphi}+\frac{1}{\cos\varphi}-2}
\end{align*}
From the second formula above, selecting the largest of the two values for $\alpha$ gives a $1-1$ correspondence between $\varphi \in (0^\circ, 90^\circ)$ and $\alpha \in (1,+\infty)$. With this convention in the sequel $\alpha$ will range in the interval $(1,+\infty)$ with
\begin{itemize}
\item $\alpha \in (4,+\infty)$ if and only if $\varphi \in (60^\circ,90^\circ)$, 
\item $\alpha = 4$ if and only if $\varphi = 60^\circ$, and
\item $\alpha \in (1,4)$ if and only if $\varphi \in (0^\circ, 60^\circ)$.
\end{itemize}	 

By the above parametrization and the hypothesis that $\varphi<90^\circ$, we see that $a^\top b, \, a^\top c, \, b^\top c$ are all positive. Thus by Lemma \ref{lem:non-positive-non-negative} we have that $a^\top p, \, b^\top p, \, c^\top p$ are either all non-negative or all non-positive. Hence, by Lemma \ref{lem:y1-y2-y3} at least two among $a^\top p, \, b^\top p, \, c^\top p$ are equal. Which two, depends on $P$. Let us assume for a moment that $P$ is such that $a^\top p=b^\top p $. This equation places $p$ inside the plane
\begin{align*}
\mathcal{V}_{ab} = \vspan\left(\mat{1\\1\\0},\mat{0\\0\\1}\right).
\end{align*} The plane $\mathcal{V}_{ab}$ consists of all vectors that have equal angles from $a$ and $b$; in particular both $c$ and $e$ are in $\mathcal{V}_{ab}$. In the rest of the proof we will show that, 
if $\alpha>4$ then $P=C$; if $\alpha=4$ then $P\in\{C,E\}$; if $\alpha<4$ then $P=E$.
The cases $p \in \mathcal{V}_{ac}$ or $p \in \mathcal{V}_{bc}$ are treated in an identical manner and this concludes the proof because $\mathcal{J}_{ABC}(A)=\mathcal{J}_{ABC}(B)=\mathcal{J}_{ABC}(C)$.

Since $p \in \mathcal{V}_{ab} \cap \Sp^2$, we have the parametrization
\begin{align*}
p = \frac{1}{\sqrt{2v^2+w^2}} \begin{bmatrix} v \\ v \\w\end{bmatrix},
\end{align*} with $v, \, w \in \R$. The choice $v=0$, corresponding to $p=e_3$ (the third standard basis vector) can be excluded, since moving $p$ from $e_3 \in \mathcal{V}_{ab} \cap \Sp^2$ to $c \in \mathcal{V}_{ab} \cap \Sp^2$ while staying in $\mathcal{V}_{ab} \cap \Sp^2$, results in decreasing angles of $p$ to $a,b,c$ and thus in lower values of $\mathcal{J}_{abc}(p)$. Consequently, we can assume $v=1$, and write
\begin{align*}
p = \frac{1}{\sqrt{2+w^2}} \begin{bmatrix} 1 \\ 1 \\w\end{bmatrix},
\end{align*} with $w \in \R$. Problem \ref{eq:FT-sphere} is now reduced to minimizing the function
\begin{align}
	\J(w) = \frac{2\left[(2+w^2)(2+\alpha^2)-(1+\alpha+w)^2\right]^{1/2}+\sqrt{2}\left|\alpha-w\right|}
	{\left[(2+w^2)(2+\alpha^2) \right]^{1/2}}. \label{eq:Jw}
\end{align}
It is easy to check that:
\begin{itemize}
    \item For $\alpha>1$, the following quantity is always positive: 
    \begin{align}
        u:=(2+w^2)(2+\alpha^2)-(1+\alpha+w)^2 = (\alpha-1)^2+(w-1)^2+(\alpha w-1)^2 \label{eq:u}
    \end{align}
    \item The choice $w = \alpha$ corresponds to $p=c$, and that is the only point where $\J(w)$ is non-differentiable.
    \item The choice $w=1$ corresponds to $p=e = (a+b+c)/\|a+b+c\|_2$.		
    \item $\J(c) = \J(e)=\sqrt{3}$ when $\alpha = 4$.
\end{itemize}
	
We consider the three intervals of $\alpha$ mentioned above corresponding to statements (1), (2) and (3) of the proposition.

First we consider $\alpha \in (4,+\infty)$. We will show that for $w \neq \alpha$, it is always the case that $\J(w)>\J(\alpha)$ so that $P=C$.   
Expanding the inequality $\J(w)>\J(\alpha)$, we obtain
\begin{align}
\frac{2u^{1/2}+\sqrt{2}\left|\alpha-w\right|}
{\left[(2+w^2)(2+\alpha^2) \right]^{1/2}}
>\frac{2\sqrt{(2+\alpha^2)^2-(1+2\alpha)^2}}{2+\alpha^2}. \label{eq:J(w)>J(alpha)}
\end{align}
Squaring \eqref{eq:J(w)>J(alpha)} we have that $\J(w)>\J(\alpha)$ if and only if 
\begin{align} \label{eq:p1Inequality}
    4\sqrt{2} u^{1/2}\left|\alpha-w\right|(2+\alpha^2)> p_1,
\end{align}
where 
$$p_1 := 4(2+w^2)\left[(2+\alpha^2)^2-(1+2\alpha)^2\right]-(2+\alpha^2)\left[4u+2(\alpha - w)^2\right].$$
By squaring \eqref{eq:p1Inequality} we have that $\J(w)>\J(\alpha)$ if
\begin{align*}
    p_2:= 32u(\alpha-w)^2(2+\alpha^2)^2-p_1^2 >0.
\end{align*}
Now $p_2$ admits the factorization
\begin{align*}
    p_2=&\ 4(w - \alpha)^2 p_3, \\
    p_3\coloneqq&\ (8\alpha^6 - 9\alpha^4 - 112\alpha^3 + 32) w^2 + (- 30\alpha^5 - 88\alpha^4 + 40\alpha^3 + 240\alpha^2 + 64\alpha - 64)w \\
    &+(7\alpha^6 - 24\alpha^5 + 64\alpha^4 + 48\alpha^3 + 48\alpha^2 - 256\alpha + 32),     
\end{align*}
where $p_3$ as a quadratic in $w$ has discriminant
\begin{align*}
\Delta(p_3)&= -32\alpha(\alpha-4)(\alpha-1)^2(\alpha^2+2)^{2}(\alpha^2+2\alpha+3)(7\alpha^2+4\alpha+16).
\end{align*}
One checks directly that $\Delta(p_3)<0$ for $\alpha\in(4,+\infty)$. Moreover, for $\alpha\in(4,+\infty)$ we have
\begin{align*}
    8\alpha^6 -9\alpha^4 - 112 \alpha^3 + 32 > 128\alpha^4 - 9\alpha^4 - 112\alpha^4 + 32 > 0,
\end{align*}
Hence for $\alpha\in(4,+\infty)$ the leading coefficient $8\alpha^6 - 9\alpha^4 - 112\alpha^3 + 32$ of $p_3$ is always positive. Therefore, for $\alpha\in(4,+\infty)$ we have that $p_3$ is always positive, thus $p_2$ is positive when $w\ne \alpha$. That is $\mathcal{J}(w) > \mathcal{J}(\alpha)$ for any $w \neq \alpha$, that is $P = C$.  

Moving on to the case $\alpha = 4$, we have
\begin{align*}
    p_2|_{\alpha=4} = 93312(w-4)^2(w-1)^2. 
\end{align*}
Since $p_2 >0$ implies that $\J(w)>\J(4)$, we see that the minimum of $\J(w)$ can only be attained for $w \in \{1,4\}$. Since $\J(1)=\J(4)=\sqrt{3}$, we conclude that $w=1,4$ are the only minima of $\J(w)$.

For $\alpha\in(1,4)$ we proceed in a similar manner to show that $\J(w)>\J(1)$ for any $w \neq 1$. Expanding the inequality $\J(w)>\J(1)$ we have
\begin{align*}
\frac{2u^{1/2}+\sqrt{2}\left|\alpha-w\right|}
{\left[(2+w^2)(2+\alpha^2) \right]^{1/2}}
>\frac{3\sqrt{2}|\alpha-1|}{\sqrt{3(2+\alpha^2)}}.
\end{align*} Both sides are non-negative and so after squaring them we have that $\J(w)>\J(1)$ if and only if 
\begin{align}\label{eq:q1Inequality}
    q_1<4\sqrt{2} u^{1/2}\left|\alpha-w\right|, 
\end{align} where $q_1$ is defined as 
\begin{align*}
q_1 := 6(2+w^2)(\alpha-1)^2-\left[4u+2(\alpha - w)^2\right].
\end{align*}

If $q_1<0$, then we are done. So suppose in the rest of the proof that $q_1 \ge 0$. By squaring \eqref{eq:q1Inequality} we have $\J(w)>\J(1)$ if 
\begin{align*}
    q_2>0, \, \, \, q_2:= 32u(a-w)^2-q_1^2.
\end{align*}
Define the following quantity
\begin{align*}
    q_3 = q_2 - 4\alpha(4-\alpha)(w - 1)^2q_1.
\end{align*} Because for $\alpha \in (1,4)$ the second term appearing in $q_3$ is always non-negative, to show $q_2>0$ it is enough to show that $q_3>0$.  
Now $q_3$ admits the factorization
\begin{align*}
    q_3=&\ 4(w - 1)^2 q_4, \\
    q_4\coloneqq&\  (\alpha^4 - 8\alpha^3 + 20\alpha^2 + 8) w^2 + (- 2\alpha^4 + 8\alpha^3 -32\alpha^2 - 16\alpha)w + \\&+(5\alpha^4 - 8\alpha^3 + 24\alpha^2),
\end{align*}
where $q_4$ as a quadratic in $w$ has discriminant
\begin{align*}
\Delta(q_4)&= -16\alpha^2(\alpha-4)^2(\alpha-1)^2(\alpha^2+2).
\end{align*}
It is clear that $\Delta(q_4)<0$ for $\alpha\in(1,4)$. Also, for $\alpha \in (1,4)$ the leading coefficient of $q_4$ is positive:\begin{align*}
    \alpha^4 - 8\alpha^3 + 20\alpha^2 + 8 
    = \alpha^4 - 8\alpha^3 + 16\alpha^2 + 4\alpha^2 + 8 
    = \alpha^2(\alpha - 4)^2 + 4\alpha^2 + 8 > 0.
\end{align*}
Therefore, for every $w\ne 1$ we have $q_4 > 0$, hence $q_3 > 0$, thus $q_2>0$. That is $\J(w) >\J(1)$ for $w \neq 1$. 
\end{proof}

\section{Isosceles Triangles} \label{section:two-equal-angles}

In this section we will prove part (2) of Theorem \ref{thm:theorem}, which is the case 
$$60^\circ \le \varphi_{AB}<\varphi_{AC}=\varphi_{BC}.$$
When $\varphi_{AC}=\varphi_{BC} = 90^\circ$, we are done by Proposition \ref{prp:big-triangle} together with $\J_{ABC}(A) = \J_{ABC}(B) = \sin \varphi_{AB} + 1 < 2 = \J_{ABC}(C)$. We will thus assume in the rest of this section that $\varphi_{AC}=\varphi_{BC} < 90^\circ$. For the same reason as in the beginning of \S \, \ref{section:equiangular}, we can choose representatives such that $a^\top b, \,  a^\top c, \, b^\top c> 0$.  

We may assume without loss of generality that 
\begin{align*}
    a &= (\cos\alpha, \sin\alpha, 0)\\
    b &= (\cos\alpha, -\sin\alpha, 0)\\
    c &= (\cos\beta, 0, \sin\beta),
\end{align*}
where $$\beta = \arccos(\cos\varphi_{AC}/\cos\alpha)\in(0,\pi/2]$$ is the angle from $c$ to the plane $\mathcal{H} = \mathcal{H}_{ab}= \vspan(a,b)$. Set
$$\O_{xy}^+=\{(p_x,p_y,p_z) \in \Sp^2 \mid p_x\ge 0,p_y\ge 0\}.$$

We first prove:

\begin{lemma} 
Suppose that $\mathscr{P}_{abc}\cap \O_{xy}^+=\{a\}$. Then Theorem \ref{thm:theorem}(2) is true. 
\end{lemma}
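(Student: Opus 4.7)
The plan is to exploit two symmetries of the isosceles configuration to promote the hypothesis on $\O_{xy}^+$ to a description of all of $\mathscr{P}_{abc}$. First I would identify the maps that preserve $\J_{abc}$: the reflection $\sigma_y\colon(x,y,z)\mapsto(x,-y,z)$ swaps $a$ with $b$ and fixes $c$, so $\J_{abc}(\sigma_y p)=\J_{abc}(p)$; the antipodal map $\iota\colon p\mapsto -p$ satisfies $\J_{abc}(\iota p)=\J_{abc}(p)$ because $\dS(v,-p)=\sqrt{1-(v^\top p)^2}=\dS(v,p)$ for every $v\in\Sp^2$. These two involutions commute, so together they generate a Klein four-group $G=\{1,\sigma_y,\iota,\sigma_y\iota\}$ acting on $\Sp^2$ by isometries of $\J_{abc}$. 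Consequently the minimizing set is $G$-invariant, i.e.\ $g\cdot\mathscr{P}_{abc}=\mathscr{P}_{abc}$ for every $g\in G$.

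The second step is to check that $G\cdot\O_{xy}^+=\Sp^2$ by a four-way case split on the signs of the first two coordinates: if $(p_x,p_y)$ lies in the quadrants $(+,+),(+,-),(-,+),(-,-)$ respectively, then applying $1,\sigma_y,\sigma_y\iota,\iota$ sends $p$ into $\O_{xy}^+$. Combining this coverage with $G$-invariance of $\mathscr{P}_{abc}$ yields
\begin{equation*}
\mathscr{P}_{abc}\;=\;G\cdot(\mathscr{P}_{abc}\cap\O_{xy}^+)\;=\;G\cdot\{a\}\;=\;\{a,-a,b,-b\},
\end{equation*}
where the middle equality is the hypothesis of the lemma. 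Descending to $\P^2$, these four representatives collapse to the two projective points $A$ and $B$, so $\mathscr{P}_{ABC}=\{A,B\}$, which is exactly the conclusion of Theorem~\ref{thm:theorem}(2).

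I do not foresee any serious obstacle in executing this plan; the whole analytic substance of the isosceles case is already absorbed by the preceding lemma, which supplies $\mathscr{P}_{abc}\cap\O_{xy}^+=\{a\}$. The only point worth verifying carefully is the quadrant bookkeeping: for instance, for the $(-,+)$ case one computes $\sigma_y\iota(x,y,z)=(-x,y,-z)$, confirming that this element indeed sends a point with $p_x<0,\,p_y\ge 0$ into $\O_{xy}^+$ while respecting the constraint $\|\cdot\|_2=1$.
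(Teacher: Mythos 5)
Your proposal is correct and is essentially the paper's own argument: the paper uses the reflection $M=\operatorname{diag}(1,-1,1)$ (your $\sigma_y$), which swaps $a$ and $b$ and fixes $c$, to transfer the hypothesis from $\O_{xy}^+$ to the hemisphere $\O_x^+$, and then descends to $\P^2$ via the antipodal identification, which is exactly the role your $\iota$ plays. Making the Klein four-group explicit and covering all of $\Sp^2$ by the four quadrant cases is a slightly more systematic bookkeeping of the same two symmetries, and your verification of the sign cases is sound.
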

\begin{proof}

We introduce an additional notation:
$$\O_{x}^+=\{(p_x,p_y,p_z) \in \Sp^2 \mid p_x\ge 0\}$$

Let $M=\operatorname{diag}(1,-1,1)\in\operatorname{SO}(3)$ be the reflection matrix with respect to the $xz$-plane. Since $Ma = b$ and $Mc = c$, we have that $\J_{abc}(Mv) = \J_{abc}(v)$ for any $v \in \Sp^2$. Since $\O_x^+=\O_{xy}^+\cup M\O_{xy}^+$ and $\mathscr{P}_{abc}\cap M\O_{xy}^+= M(\mathscr{P}_{abc}\cap\O_{xy}^+)$, we have
\begin{align*}
    \mathcal{P}_{abc}\cap \O_x^+ = (\mathcal{P}_{abc}\cap \O_{xy}^+) \cup M(\mathcal{P}_{abc}\cap \O_{xy}^+) = \{a\} \cup M\{a\} = \{a,b\}, 
\end{align*}
which concludes the proof.
\end{proof}

Thus in the rest of this section we will show that $\mathscr{P}_{abc}\cap \O_{xy}^+=\{a\}$. We need the following fact:

\begin{lemma} \label{lem:range-of-p}
Let $p=(p_x,p_y,p_z)\in\mathscr{P}_{abc}\cap\O_{xy}^+$. Then $p_z, \, a^\top p,\, b^\top p,\, c^\top p \ge 0$.
\end{lemma}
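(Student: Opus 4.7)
The plan is to split the four inequalities into two groups and dispatch them separately: the sign conditions on $a^\top p, b^\top p, c^\top p$ will follow from Lemma~\ref{lem:non-positive-non-negative}, and the coordinate inequality $p_z \ge 0$ will come from Lemma~\ref{lem:projections}, with a small amount of casework needed to connect them.

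Since by the choice of representatives in this section $a^\top b,\, a^\top c,\, b^\top c > 0$, Lemma~\ref{lem:non-positive-non-negative} ensures that $a^\top p,\, b^\top p,\, c^\top p$ are either all non-negative or all non-positive. I would rule out the non-positive alternative by inspection: in the coordinates fixed in this section, $a = (\cos\alpha, \sin\alpha, 0)$ with $\alpha \in (0, \pi/4)$ (forced by $a^\top b = \cos(2\alpha) > 0$), so the identity $a^\top p = p_x \cos\alpha + p_y \sin\alpha$ combined with $p_x, p_y \ge 0$ and $\cos\alpha,\sin\alpha>0$ would force $p_x = p_y = 0$, hence $p = (0,0,\pm 1)$. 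But then $\dS(a,p) = 1$ strictly exceeds $\dS(a,b) = \sin(2\alpha) < 1$, contradicting Lemma~\ref{lem:distance-to-vertices}. Consequently $a^\top p,\, b^\top p,\, c^\top p \ge 0$, which handles three of the four claims.

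For $p_z \ge 0$, I would apply Lemma~\ref{lem:projections} with $\mathcal{H} = \vspan(a,b)$; in our coordinates this is the $xy$-plane, so $p_{\mathcal{H}} = (p_x, p_y, 0)$ and $p_{\mathcal{H}^\perp} = (0, 0, p_z)$. The lemma then yields
$$(c^\top p_{\mathcal{H}})(c^\top p_{\mathcal{H}^\perp}) \,=\, p_x p_z \cos\beta \sin\beta \,\ge\, 0,$$
and $\cos\beta, \sin\beta > 0$ since $\beta \in (0, \pi/2)$ (the endpoint $\beta = \pi/2$ is excluded because $\varphi_{AC} < 90^\circ$). Hence $p_x p_z \ge 0$, and combined with $p_x \ge 0$ this gives $p_z \ge 0$ whenever $p_x > 0$.

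The only obstacle, and the main case to be careful about, is $p_x = 0$. Here I would feed the sign condition $b^\top p \ge 0$ established in the second paragraph back into the coordinates: $b^\top p = -p_y \sin\alpha \ge 0$, which together with $p_y \ge 0$ and $\sin\alpha > 0$ forces $p_y = 0$. But then $p = (0,0,\pm 1)$ once more, already excluded by the Lemma~\ref{lem:distance-to-vertices} argument above. Therefore $p_x > 0$, so $p_z \ge 0$, completing the proof.
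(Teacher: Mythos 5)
Your proof is correct and follows essentially the same route as the paper: the paper likewise gets the sign agreement of $a^\top p, b^\top p, c^\top p$ from Lemma~\ref{lem:non-positive-non-negative} after excluding $p=(0,0,\pm 1)$ by a coordinate computation, and obtains $p_z\ge 0$ from the reflection/projection lemmas applied to $\mathcal{H}=\vspan(a,b)$ (it invokes Lemma~\ref{lem:inner-products-of-normal} with $n=e_z$ rather than Lemma~\ref{lem:projections} directly, which spares it your $p_x=0$ case split). The only cosmetic differences are the order of the steps and your use of Lemma~\ref{lem:distance-to-vertices} in place of the paper's direct comparison $\J_{abc}(p)=2+\dS(c,p)>\J_{abc}(a)$.
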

\begin{proof}
    By hypothesis $p_x, \, p_y\ge 0$ and so $a^\top p=p_x\cos\alpha + p_y\sin\alpha \ge 0$. If $a^\top p=0$, then $p_x=p_y=0$ and so $\J_{abc}(p)=1+1+\dS(c,p)>\J_{abc}(a)$. Hence $a^\top p>0$ and Lemma \ref{lem:non-positive-non-negative} gives $b^\top p, \, c^\top p\ge 0$. Now $e_z=(0,0,1)$ is normal to $\mathcal{H}_{ab}$, thus by Lemma \ref{lem:inner-products-of-normal} we have $p_z \sin\beta=(p^\top e_z)(c^\top e_z) \ge 0$, which means $p_z\ge 0$.
\end{proof}

In what follows, we take $p\in\mathscr{P}_{abc}\cap\O_{xy}^+$ with $p \neq a$ and proceed in several steps to derive a contradiction by showing that $\J_{abc}(p)>\J_{abc}(a)$.

{\bf{Step 1.}} Define the angle 
$$\beta^* = \arccos(\cos(\varphi_{AB})/\cos\alpha) < \arccos(\cos\varphi_{AC}/\cos\alpha) = \beta,$$
and the point 
$$ c^* = (\cos\beta^*,0,\sin\beta^*).$$ Let $C^*$ be the point in $\mathbb{P}^2$ represented by $c^*$. One immediately checks that $\varphi_{AB} = \varphi_{AC^*} = \varphi_{BC^*}$. Geometrically, the point $c^*$ is obtained as follows:

\begin{construction} \label{construction:c*}
Let $\V_{ab}$ be the plane consisting of all points that have equal angles from $a,b$. By hypothesis $c \in \V_{ab} \cap \Sp^2$ and so is the point $e_x=(1,0,0)$ that cuts in half the arc $\Arc{ab}$ and whose angle from $a,b$ is $\varphi_{AB}/2$. Since $\varphi_{AC} = \varphi_{BC} > \varphi_{AB}/2$, $c^*$ is the unique point on the arc $\Arc{ce_x}$ that has angle from $a,b$ equal to $\varphi_{AB}$.
\end{construction}

{\bf{Step 2.}} We define an auxiliary point $c_p$. We start with the following property:

\begin{lemma} \label{lem:lemma}
The point $p$ lies in the spherical triangle of $\Sp^2$ defined by $a,c,e_x$. 
\end{lemma}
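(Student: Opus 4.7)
The plan is to use the first-order optimality condition (Lemma \ref{lem:zero-gradient}) to write $p$ as a non-negative combination of $a,b,c$, and then, via the simple identity $b = 2\cos\alpha\, e_x - a$, to rewrite it as a non-negative combination of $a,c,e_x$. Since $a, c, e_x$ are linearly independent and all lie in the open hemisphere $\{x>0\}$ (because $\cos\alpha,\cos\beta>0$), the spherical triangle $\triangle(a,c,e_x)$ is exactly the set of unit vectors of the form $\mu_1 a + \mu_2 c + \mu_3 e_x$ with $\mu_i \ge 0$ not all zero, so producing such a representation of $p$ will suffice.

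First I would dispose of the case $p=c$ trivially, since $c$ is a vertex of the triangle. The hypotheses $p\in\O_{xy}^+$, $p\ne a$, together with Lemma \ref{lem:range-of-p} (which yields $a^\top p,\, b^\top p,\, c^\top p,\, p_z \ge 0$), rule out $p\in\{\pm a,\pm b,-c\}$, so in the remaining case $p\notin\{\pm a,\pm b,\pm c\}$ and Lemma \ref{lem:zero-gradient} applies. Expanding $\tau_p$ and clearing denominators rewrites that identity as
$$\lambda_a a + \lambda_b b + \lambda_c c = \mu p,\qquad \lambda_v=\frac{v^\top p}{\sqrt{1-(v^\top p)^2}},\qquad \mu=\sum_{v\in\{a,b,c\}}\frac{(v^\top p)^2}{\sqrt{1-(v^\top p)^2}}.$$
Lemma \ref{lem:range-of-p} gives $\lambda_a,\lambda_b,\lambda_c\ge 0$, and the linear independence of $a,b,c$ forces $\mu>0$.

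The key step is to use $a + b = (2\cos\alpha, 0, 0) = 2\cos\alpha\, e_x$, so that $b = 2\cos\alpha\, e_x - a$. Substituting yields
$$\mu p = (\lambda_a - \lambda_b)\, a \;+\; 2\lambda_b\cos\alpha\, e_x \;+\; \lambda_c\, c.$$
All coefficients except possibly $\lambda_a - \lambda_b$ are manifestly non-negative, so it remains to show $\lambda_a\ge \lambda_b$. Since $t\mapsto t/\sqrt{1-t^2}$ is strictly increasing on $[0,1)$ and both $a^\top p,\, b^\top p$ lie there (using $p\ne \pm a,\pm b$), this reduces to $a^\top p \ge b^\top p$, which in coordinates reads $2p_y\sin\alpha \ge 0$---true because $p_y\ge 0$ by the assumption $p\in\O_{xy}^+$. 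Hence $p$ lies in $\triangle(a,c,e_x)$.

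The main conceptual step---and where the special geometry of the isosceles configuration actually enters---is the recognition that the first-order condition, combined with Lemma \ref{lem:range-of-p}, automatically produces a non-negative representation of $p$ against $a,b,c$, and that the elementary identity $b = 2\cos\alpha\, e_x - a$ then lets one trade the $b$-contribution for an $e_x$-contribution, with the single residual inequality $\lambda_a\ge\lambda_b$ collapsing directly to the half-space hypothesis $p_y\ge 0$. Once this is seen, no case analysis or further computation is required.
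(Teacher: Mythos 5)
Your proof is correct, but it takes a genuinely different route from the paper's. The paper characterizes the spherical triangle as an intersection of three half-spaces (one per side, each containing the opposite vertex) and verifies each half-space condition directly from Lemma \ref{lem:inner-products-of-normal} (the reflection-based sign condition) together with $p\in\O_{xy}^+$; no stationarity condition is invoked, and no case analysis is needed. You instead use the dual description of the triangle as the simplicial cone $\{\mu_1 a+\mu_2 c+\mu_3 e_x:\mu_i\ge 0\}$ intersected with $\Sp^2$, and produce the required conic representation from the first-order condition of Lemma \ref{lem:zero-gradient}, trading $b$ for $e_x$ via $a+b=2\cos\alpha\,e_x$ and reducing the one nontrivial sign, $\lambda_a\ge\lambda_b$, to $p_y\ge 0$. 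Both characterizations of the triangle are equivalent for linearly independent vertices, and you correctly note that $a,c,e_x$ are independent, that $\cos\alpha>0$ (from $a^\top b>0$), and that $\cos\beta>0$ (from the section's standing assumption $\varphi_{AC}<90^\circ$). The price of your route is the need to exclude $p\in\{\pm a,\pm b,\pm c\}$ before applying the gradient condition, which you handle correctly ($p=c$ is trivially a vertex; the others are ruled out by $p\in\O_{xy}^+$, $p\ne a$, and Lemma \ref{lem:range-of-p}); what it buys is an explicit nonnegative barycentric representation of $p$ against $a,c,e_x$, which is slightly more information than bare membership. Either argument suffices for the way the lemma is used in Construction \ref{construction:cp}.
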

\begin{proof}
Any two vertices of a spherical triangle, say $v_1$ and $v_2$, together with the origin, determine a plane $\mathcal{H}_{v_1v_2}$ which cuts the space into two half-spaces. A point $q \in \Sp^2$ lies in the spherical triangle if and only if for any two vertices $v_1$ and $v_2$, the third vertex $v_3$ and $q$ lie in the same half-space of $\mathcal{H}_{v_1v_2}$.

By Lemma \ref{lem:inner-products-of-normal} we have that $b$ and $p$ lie in the same half-space of $\mathcal{H}_{ac}$. Similarly, $c$ and $p$ lie in the same half-space of $\mathcal{H}_{ab}=\mathcal{H}_{ae_x}$. Moreover, $e_y=(0,1,0)$ is normal to $\mathcal{H}_{ce_x}$ and since $p \in \mathcal{O}_{xy}^+$, we have $p^\top e_y=p_y\ge 0$. Also, $a^\top e_y=\sin\alpha\ge 0$ and so $p$ and $a$ lie in the same half-space of $\mathcal{H}_{ce_x}$. 
\end{proof}

Our auxiliary point $c_p$ is defined as follows:
\begin{construction} \label{construction:cp}
By Lemma \ref{lem:lemma} the point $p$ lies in the spherical triangle defined by $a,c,e_x$. Extend the arc $\Arc{ap}$ in the direction from $a$ to $p$ so that it intersects the opposite arc $\Arc{ce_x}$ of the spherical triangle at a point $c'$. From Construction \ref{construction:c*} we have that $c^* \in \Arc{ce_x}$. If $c' \in \Arc{c^*e_x}$, we let $c_p=c^*$. Otherwise, we let $c_p=c'$.
\end{construction}

We next describe the coordinates of $c_p$. Note that the arc $\Arc{e_xc}$ can be written parametrically as
$$g(\psi) = (\cos \psi, 0, \sin \psi)$$ with $\psi \in [0,\beta]$. Thus there are unique $\beta', \beta^* \in [0,\beta]$ such that $c'=g(\beta')$ and $c^*=g(\beta^*)$. With $\beta_p = \max(\beta',\beta^*)$ we have   
$$c_p = (\cos\beta_p,0,\sin\beta_p).$$
In what follows, $C_p$ will be the point in $\mathbb{P}^2$ represented by $c_p$.

{\bf{Step 3.}} In step 4 we will prove the inequalities
\begin{align}
    \J_{abc}(p)-\J_{abc_p}(p) &\ge \J_{abc}(a)-\J_{abc_p}(a), \label{ineq:greater-difference}\\
    \J_{abc_p}(p)&\ge \J_{abc_p}(a) \label{ineq:Jp-greater-than-Ja},
\end{align} with at least one of them being strict. This will establish that $\J_{abc}(p) > \J_{abc}(a)$ and thus conclude this section. Here we prove certain auxiliary facts that we need. 

\begin{lemma} \label{lem:range-of-slope}
    Write $p=(p_x,p_y,p_z)$, then $p_x\sin\alpha-p_y\cos\alpha>0$.
\end{lemma}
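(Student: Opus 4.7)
The plan is to extract the inequality from the constraint that $p$ lies inside the spherical triangle with vertices $a,c,e_x$, which is exactly the content of Lemma \ref{lem:lemma}. Being in that spherical triangle means, in particular, that $p$ lies on the same side of the plane $\mathcal{H}_{ac}=\vspan(a,c)$ as the vertex $b$. Equivalently, if $n=a\times c$ is a normal to $\mathcal{H}_{ac}$, then $(b^\top n)(p^\top n)\ge 0$. This sign condition is also available from Lemma \ref{lem:inner-products-of-normal} together with the fact $b^\top p\ge 0$ from Lemma \ref{lem:range-of-p}, so either route works.

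The next step is a direct computation with the explicit coordinates. One finds
$$n = a\times c = \bigl(\sin\alpha\sin\beta,\,-\cos\alpha\sin\beta,\,-\sin\alpha\cos\beta\bigr),$$
so $b^\top n = 2\sin\alpha\cos\alpha\sin\beta = \sin(2\alpha)\sin\beta>0$, using $\alpha\in(0,\pi/2)$ and $\beta\in(0,\pi/2)$. Therefore $p^\top n\ge 0$ rearranges to
$$\sin\beta\,\bigl(p_x\sin\alpha - p_y\cos\alpha\bigr)\ge \sin\alpha\cos\beta\, p_z.$$
Dividing by $\sin\beta>0$ and using $p_z\ge 0$ (Lemma \ref{lem:range-of-p}) together with $\cos\beta>0$, this yields $p_x\sin\alpha - p_y\cos\alpha \ge \sin\alpha\cot\beta\, p_z \ge 0$.

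It remains to upgrade this to a strict inequality. If equality held, then $p_z=0$ (since $\sin\alpha\cot\beta>0$, which is where the assumption $\beta<\pi/2$, equivalently $\varphi_{AC}<\pi/2$, is essential), and also $p_x\sin\alpha=p_y\cos\alpha$. Combined with $p\in\Sp^2$, $p_z=0$, and $p_x,p_y\ge 0$, the only solution is $p=(\cos\alpha,\sin\alpha,0)=a$, contradicting the standing hypothesis $p\ne a$. I do not foresee any real obstacle here; the mild point to verify is simply that $\beta\in(0,\pi/2)$ genuinely under the hypotheses of the isosceles case ($\varphi_{AB}<\varphi_{AC}$ gives $\beta>0$, and $\varphi_{AC}<\pi/2$ gives $\beta<\pi/2$), so the divisions by $\sin\beta$ and the strict positivity of $\cot\beta$ are legitimate.
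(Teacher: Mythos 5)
Your proof is correct and takes essentially the same route as the paper: both derive $(b^\top n)(n^\top p)\ge 0$ for $n=a\times c$ from Lemma \ref{lem:inner-products-of-normal} and $b^\top p\ge 0$, unpack it in coordinates, and rule out equality by showing it would force $p=a$. (Your value $b^\top n=\sin 2\alpha\sin\beta$ is the correct one; the paper's extra factor $\tfrac12$ is a harmless slip, and in either case only positivity matters.)
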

\begin{proof}
From Lemma \ref{lem:range-of-p} we know that $b^\top p\ge 0$. With $n=a\times c$ Lemma \ref{lem:inner-products-of-normal} gives 
$(b^\top n)(n^\top p) \ge 0$. Since $b^\top n=\frac{1}{2}\sin\beta\sin 2\alpha>0$, we thus have $n^\top p\ge 0$. In terms of coordinates this reads
    $$ p_x\sin\alpha\sin\beta - p_y\cos\alpha\sin\beta - p_z\sin\alpha\cos\beta\ge 0.$$
Again from Lemma \ref{lem:range-of-p} we know that $p_z \ge 0$ and so  
    $$\sin\beta(p_x\sin\alpha-p_y\cos\alpha) \ge p_z\sin\alpha\cos\beta\ge 0.$$
    We conclude that $p_x\sin\alpha-p_y\cos\alpha\ge 0$. If $p_x\sin\alpha-p_y\cos\alpha=0$, then the above inequality gives $p_z=0$. But this would imply that $p = a$, a contradiction. 
    \end{proof}

\begin{lemma} \label{lem:function-f}
    Define function $f: [0, \pi/2] \rightarrow \mathbb{R}$ as
    $$f(\psi) = (p_x\sin\alpha - p_y\cos\alpha)\sin\psi - p_z\sin\alpha\cos\psi.$$
    Then $f(\beta')=0$ and $f$ is strictly ascending on $[0,\pi/2]$.
\end{lemma}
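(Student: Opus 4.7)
The key observation is that $f(\psi)$ is precisely the triple scalar product $(a \times g(\psi))^{\top} p$, where $g(\psi) = (\cos\psi, 0, \sin\psi)$ is the parametrization of the arc $\Arc{e_x c}$ introduced just before the statement. Indeed, with $a = (\cos\alpha, \sin\alpha, 0)$, a one-line cross-product computation gives
\[ a \times g(\psi) = (\sin\alpha\sin\psi,\; -\cos\alpha\sin\psi,\; -\sin\alpha\cos\psi), \]
and dotting with $p = (p_x, p_y, p_z)$ recovers the formula for $f(\psi)$. This interpretation makes the geometry transparent: $f(\psi)$ measures on which side of the plane $\vspan(a,p)$ the varying point $g(\psi)$ lies.

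For the first claim, Construction~\ref{construction:cp} places $c'$ on the great circle cut out by $\vspan(a,p)$, so the vectors $a, p, c'$ are coplanar in $\R^3$ and their triple product vanishes. Since $c' = g(\beta')$, this is precisely the identity $f(\beta') = 0$.

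For the second claim I would simply differentiate:
\[ f'(\psi) = (p_x \sin\alpha - p_y \cos\alpha) \cos\psi + p_z \sin\alpha \sin\psi. \]
Lemma~\ref{lem:range-of-slope} yields $p_x \sin\alpha - p_y \cos\alpha > 0$, and Lemma~\ref{lem:range-of-p} yields $p_z \ge 0$. The standing hypothesis $60^\circ \le \varphi_{AB} < \varphi_{AC} < 90^\circ$ forces $\alpha = \varphi_{AB}/2 \in [\pi/6,\pi/4)$, hence $\sin\alpha > 0$. On $[0,\pi/2]$ both $\cos\psi$ and $\sin\psi$ are non-negative, and the first summand of $f'(\psi)$ is strictly positive whenever $\cos\psi > 0$. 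Therefore $f'(\psi) > 0$ on $[0,\pi/2)$, and (with $f'(\pi/2) = p_z \sin\alpha \ge 0$) continuity gives strict monotonicity on the entire closed interval.

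I do not anticipate any real obstacle. The only conceptual step is recognizing $f$ as the triple product attached to $(a, g(\psi), p)$; after that, both assertions reduce to sign information already packaged in the two preceding lemmas, and no further computation is required.
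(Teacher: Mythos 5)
Your proof is correct and follows essentially the same route as the paper: $f(\beta')=0$ is the vanishing of the triple product $(a\times c')^{\top}p$ by coplanarity of $a,c',p$, and strict monotonicity follows from $p_x\sin\alpha-p_y\cos\alpha>0$ (Lemma~\ref{lem:range-of-slope}) together with $p_z\ge 0$ (Lemma~\ref{lem:range-of-p}). The only cosmetic difference is that you differentiate and handle the endpoint $\psi=\pi/2$ explicitly, whereas the paper reads off monotonicity directly from the signs of the coefficients of $\sin\psi$ and $-\cos\psi$.
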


\begin{proof}
By construction $a,c',p$ are coplanar and so $(a\times c')^\top p=0$, which in coordinates is the same as $f(\beta')=0$. From Lemma \ref{lem:range-of-slope} we have $p_x\sin\alpha-p_y\cos\alpha>0$, while from Lemma \ref{lem:range-of-p} we have $p_z\ge 0$, thus $f$ is strictly ascending.
\end{proof}

\begin{lemma} \label{lem:greater-inner-product}
    For $\psi\in[\beta',\beta]$ we have $g(\psi)^\top p \ge g(\psi)^\top a\ge 0$.
\end{lemma}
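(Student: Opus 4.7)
The plan is to handle the two inequalities separately. The second one, $g(\psi)^\top a \ge 0$, is immediate: $g(\psi)^\top a = \cos\alpha\cos\psi$, which is non-negative for $\alpha \in (0,\pi/2)$ and $\psi \in [\beta',\beta] \subseteq [0,\pi/2]$.

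For the main inequality $g(\psi)^\top p \ge g(\psi)^\top a$, define
$$F(\psi) := g(\psi)^\top p - g(\psi)^\top a = (p_x - \cos\alpha)\cos\psi + p_z\sin\psi.$$
As a linear combination of $\cos\psi$ and $\sin\psi$, $F$ has consecutive zeros $\pi$ apart, hence at most one zero inside $[0,\pi/2]$. Combined with $F(\pi/2) = p_z \ge 0$ (Lemma \ref{lem:range-of-p}), a short intermediate value argument shows that if also $F(\beta') \ge 0$, then $F \ge 0$ throughout $[\beta',\pi/2] \supseteq [\beta',\beta]$: a sign change inside $(\beta',\pi/2]$ together with non-negativity at both endpoints would force at least two zeros of $F$ in $[0,\pi/2]$, contradicting the sinusoidal structure.

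The main work is thus to verify $F(\beta') \ge 0$. Using $f(\beta') = 0$ from Lemma \ref{lem:function-f} together with $\cos^2\beta' + \sin^2\beta' = 1$ and the sign information in Lemma \ref{lem:range-of-slope}, one solves for $c' = g(\beta')$ explicitly as
$$\cos\beta' = \frac{p_x\sin\alpha - p_y\cos\alpha}{D}, \qquad \sin\beta' = \frac{p_z\sin\alpha}{D},$$
where $D := \sqrt{(p_x\sin\alpha - p_y\cos\alpha)^2 + p_z^2\sin^2\alpha} > 0$. Substituting these and clearing the positive denominator reduces the task to the polynomial identity
$$(p_x - \cos\alpha)(p_x\sin\alpha - p_y\cos\alpha) + p_z^2\sin\alpha = (\sin\alpha + p_y)(1 - a^\top p),$$
which one verifies by eliminating $p_z^2$ via $p_x^2 + p_y^2 + p_z^2 = 1$ and expanding $a^\top p = p_x\cos\alpha + p_y\sin\alpha$. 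The right-hand side is non-negative, since $\sin\alpha + p_y > 0$ (Lemma \ref{lem:range-of-p}) and $1 - a^\top p \ge 0$ ($a, p$ being unit vectors). Hence $F(\beta') \ge 0$, as required.

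The main obstacle is noticing the factorization $(\sin\alpha + p_y)(1 - a^\top p)$ on the right-hand side of the identity, which is far from apparent in the expanded form of $F(\beta')$; once this is in hand, both the sinusoidal reduction and the algebraic verification are routine.
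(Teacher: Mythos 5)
Your proof is correct, but it reaches the conclusion by a noticeably different route than the paper. Both arguments share the same skeleton: the bound $g(\psi)^\top a=\cos\alpha\cos\psi\ge 0$ is immediate, and the main inequality is reduced to checking two endpoint values of the sinusoid $F(\psi)=g(\psi)^\top(p-a)$ and then interpolating (the paper writes $F(\beta'+\delta)$ as $\cos\delta\, F(\beta')+\sin\delta\, F(\beta'+\pi/2)$ and does a sign case analysis; you invoke the fact that a nonzero sinusoid has at most one zero in an interval of length less than $\pi$ --- these are equivalent uses of the same structure). The differences are in the endpoints. The paper anchors at $\psi=\beta$ via the optimality of $p$ (Lemma \ref{lem:distance-to-vertices} gives $c^\top p\ge c^\top a$), whereas you anchor at $\psi=\pi/2$, where $F(\pi/2)=p_z\ge 0$ is free from Lemma \ref{lem:range-of-p}; your choice is cheaper and still covers $[\beta',\beta]$ since $\beta\le\pi/2$. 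At $\psi=\beta'$ the paper argues geometrically: since $p$ lies on the arc $\Arc{ac'}$, one has $\varphi_{c'p}\le\varphi_{c'a}$ and hence $c'^\top p\ge c'^\top a$. You instead solve $f(\beta')=0$ explicitly for $(\cos\beta',\sin\beta')$ (the sign choices are justified by Lemma \ref{lem:range-of-slope} and $\beta'\in[0,\pi/2]$) and verify the identity
\begin{align*}
(p_x-\cos\alpha)(p_x\sin\alpha-p_y\cos\alpha)+p_z^2\sin\alpha=(\sin\alpha+p_y)(1-a^\top p),
\end{align*}
which I have checked and which is the algebraic shadow of the paper's arc argument (the factor $1-a^\top p=1-\cos\varphi_{ap}$ is exactly what the geometric containment supplies). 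Your computation buys independence from Construction \ref{construction:cp} beyond the defining relation $f(\beta')=0$, at the cost of having to discover the factorization; the paper's version is shorter once the geometric picture is in place. Two small attribution points: $p_y\ge 0$ comes from $p\in\O_{xy}^+$ rather than from Lemma \ref{lem:range-of-p}, and $\sin\alpha>0$ because $0<\alpha<\pi/4$ (as $a^\top b=\cos 2\alpha>0$); neither affects correctness.
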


\begin{proof}
Note that $g(\psi)^\top a=\cos\psi\cos\alpha\ge 0$ for $\psi\in[\beta',\beta]$, which proves the second inequality in the statement.

For the first inequality, we first prove it for $\psi = \beta, \beta'$. Lemma \ref{lem:range-of-p} gives $c^\top p\ge 0$, while Lemma \ref{lem:distance-to-vertices} gives $c^\top p \ge c^\top a$. Recalling that $c=g(\beta)$, we have  $g(\beta)^\top(p-a)\ge 0$. By the construction of $c'$, we have $p \in \Arc{ac'}$ and so $\operatorname{length}\Arc{c'p} \le \operatorname{length} \Arc{ac'}$. Hence $c'^\top p \ge c'^\top a\ge 0$ and so $g(\beta')^\top (p-a)\ge 0$, where we recall that $c' = g(\beta')$. 

We next consider $\psi \in (\beta', \beta)$. Define function $h: [0, \beta-\beta'] \rightarrow \R$ as $$h(\delta) = g(\delta+\beta')^\top(p-a).$$ It is enough show that $h(\delta)\ge 0$ for any $\delta\in(0,\beta-\beta')$. A direct calculation gives $$h(\delta)= \cos\delta\left(g(\beta')^\top (p-a)\right) + \sin\delta\left(g(\beta'+\pi/2)^\top (p-a)\right).$$
We already have $g(\beta')^\top(p-a)\ge 0$. If $g(\beta'+\pi/2)^\top (p-a)\ge 0$, then $h(\delta)\ge 0$. Otherwise, $h(\delta)$ is decreasing and so $h(\delta)\ge h(\beta-\beta')=g(\beta)^\top(p-a)\ge 0$.
\end{proof}

For convenience, we will be using Newton notation $(g(\psi)^\top p)'$ and $(g(\psi)^\top a)'$ for the derivatives
$$\frac{d\left(g(\psi)^\top p\right)}{d\psi} = -p_x\sin\psi+p_z\cos\psi,  \quad \frac{d\left(g(\psi)^\top a\right)}{d\psi}=-\cos\alpha\sin\psi.$$

\begin{lemma} \label{lem:decending-inner-products}
    For $\psi\in(\beta',\beta)$, we have $(g(\psi)^\top p)'<0$, $(g(\psi)^\top a)'<0$, and
    \begin{align*}
        -\sin\alpha(g(\psi)^\top p)' > -p_y(g(\psi)^\top a)' \ge 0. 
    \end{align*}
\end{lemma}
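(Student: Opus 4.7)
The plan is to reduce all three assertions to a single algebraic identity that ties the two derivatives back to the function $f$ of Lemma \ref{lem:function-f}. First, I would compute the derivatives directly from the parametrizations $g(\psi)^\top p = p_x\cos\psi + p_z\sin\psi$ and $g(\psi)^\top a = \cos\alpha\cos\psi$, obtaining $(g(\psi)^\top p)' = -p_x\sin\psi + p_z\cos\psi$ and $(g(\psi)^\top a)' = -\cos\alpha\sin\psi$. Since the hypothesis $a^\top b = \cos 2\alpha > 0$ forces $\alpha \in (0,\pi/4)$, both $\sin\alpha$ and $\cos\alpha$ are positive; moreover $\beta \le \pi/2$ and $\psi \in (\beta',\beta)$ yields $\sin\psi, \cos\psi > 0$. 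From this the second listed inequality $(g(\psi)^\top a)' = -\cos\alpha\sin\psi < 0$ is immediate, and the rightmost non-negativity $-p_y(g(\psi)^\top a)' = p_y\cos\alpha\sin\psi \ge 0$ follows at once from $p_y \ge 0$ (Lemma \ref{lem:range-of-p}).

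The key identity is
\begin{align*}
-\sin\alpha\,(g(\psi)^\top p)' \;-\; \bigl(-p_y\,(g(\psi)^\top a)'\bigr)
&= p_x\sin\alpha\sin\psi - p_z\sin\alpha\cos\psi - p_y\cos\alpha\sin\psi \\
&= (p_x\sin\alpha - p_y\cos\alpha)\sin\psi - p_z\sin\alpha\cos\psi \\
&= f(\psi).
\end{align*}
By Lemma \ref{lem:function-f}, $f(\beta') = 0$ and $f$ is strictly ascending on $[0,\pi/2]$, so $f(\psi) > 0$ for every $\psi \in (\beta',\beta)$. This gives the strict middle inequality $-\sin\alpha\,(g(\psi)^\top p)' > -p_y\,(g(\psi)^\top a)'$. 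Combining it with the already-established $-p_y\,(g(\psi)^\top a)' \ge 0$, I conclude $-\sin\alpha\,(g(\psi)^\top p)' > 0$, and since $\sin\alpha > 0$ this is exactly $(g(\psi)^\top p)' < 0$, closing the last of the three assertions.

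The only real obstacle is noticing the identity in the display above; once it is in hand, all three statements are packaged together. There are no delicate case distinctions, since the strict inequality $\psi > \beta'$ on the open interval $(\beta',\beta)$ exactly separates $f(\psi)$ from its zero at $\beta'$, and the range restriction on $\alpha$ keeps every trigonometric factor strictly positive throughout.
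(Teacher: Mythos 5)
Your proposal is correct and follows essentially the same route as the paper: both compute the two derivatives in coordinates, observe that $-\sin\alpha\,(g(\psi)^\top p)' - \bigl(-p_y\,(g(\psi)^\top a)'\bigr) = f(\psi)$, and invoke Lemma \ref{lem:function-f} to get $f(\psi)>0$ on $(\beta',\beta)$, from which all three assertions follow. Your write-up is just slightly more explicit about the positivity of $\sin\alpha$, $\cos\alpha$ and $\sin\psi$.
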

\begin{proof}
By immediate inspection $(g(\psi)^\top a)' = -\cos\alpha\sin\psi < 0$. From Lemma \ref{lem:function-f}, $f(\beta')=0$ and $f$ is strictly ascending on $\psi\in[\beta',\beta]$. Then $f(\psi)>f(\beta')=0$ for $\psi\in(\beta',\beta)$. In terms of coordinates this reads
$$p_x\sin\alpha\sin\psi - p_y\cos\alpha\sin\psi - p_z\sin\alpha\cos\psi > 0,$$ which can be equivalently written as 
$$ -\sin\alpha(g(\psi)^\top p)' > -p_y(g(\psi)^\top a)' \ge 0,$$ and the statement follows.
\end{proof}

\begin{lemma} \label{lem:greater-inner-product-of-tangents}
For every $\psi\in(\beta',\beta)$ we have 
    \begin{align} \label{ineq:greater-inner-product-of-tangents}
        \frac{-(g(\psi)^\top p)'}{\sqrt{1-\left(g(\psi)^\top p\right)^2}} >
        \frac{-(g(\psi)^\top a)'}{\sqrt{1-\left(g(\psi)^\top a\right)^2}} > 0.
    \end{align}
\end{lemma}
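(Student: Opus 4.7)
The plan is to reduce the claimed strict inequality to the strict inequality already established in Lemma~\ref{lem:decending-inner-products}. The right inequality is immediate: Lemma~\ref{lem:decending-inner-products} gives $(g(\psi)^\top a)'=-\cos\alpha\sin\psi<0$, while $g(\psi)^\top a=\cos\alpha\cos\psi\in(0,1)$ for $\psi\in(0,\pi/2)$, so both numerator and denominator are strictly positive.

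For the left inequality, I would first check that $p\neq\pm g(\psi)$ on $(\beta',\beta)$, so the $p$-side denominator is positive. The assumption $p\in\O_{xy}^+$ together with $p_z\ge 0$ from Lemma~\ref{lem:range-of-p} rules out $p=-g(\psi)$, and if $p=g(\psi)$ then $p$ lies on $\Arc{ce_x}$, forcing $c'=p$ and hence $\beta'=\psi$, which is excluded from the open interval. Since both numerators are strictly positive by Lemma~\ref{lem:decending-inner-products}, the target inequality is equivalent, after squaring and cross-multiplying, to
\begin{equation*}
\bigl((g(\psi)^\top p)'\bigr)^2\bigl(1-(g(\psi)^\top a)^2\bigr) > \bigl((g(\psi)^\top a)'\bigr)^2\bigl(1-(g(\psi)^\top p)^2\bigr).
\end{equation*}

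The decisive observation is the Pythagorean-type identity
\begin{equation*}
(g(\psi)^\top q)^2+\bigl((g(\psi)^\top q)'\bigr)^2 = q_x^2+q_z^2 = 1-q_y^2,
\end{equation*}
valid for every unit vector $q=(q_x,q_y,q_z)$, which follows because $g(\psi)$ and $g'(\psi)=(-\sin\psi,0,\cos\psi)$ form an orthonormal basis of the $xz$-plane and $q_y$ is the component of $q$ orthogonal to it. Applying this to $q=p$ (where $q_y^2=p_y^2$) and to $q=a$ (where $q_y^2=\sin^2\alpha$) and substituting into the squared inequality, the quartic cross-terms cancel and what remains is
\begin{equation*}
\sin^2\alpha\bigl((g(\psi)^\top p)'\bigr)^2 > p_y^2\bigl((g(\psi)^\top a)'\bigr)^2.
\end{equation*}

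This is precisely the square of the strict inequality $-\sin\alpha(g(\psi)^\top p)'>-p_y(g(\psi)^\top a)'\ge 0$ from Lemma~\ref{lem:decending-inner-products}; since both sides there are non-negative, squaring preserves the strict inequality and the proof is complete. The only nontrivial step is spotting the Pythagorean identity that converts the factors $1-(g(\psi)^\top q)^2$ into a sum containing $((g(\psi)^\top q)')^2$; after that, everything collapses to the bound of the previous lemma with no further work.
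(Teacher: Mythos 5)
Your proof is correct and follows essentially the same route as the paper: reduce the left inequality by squaring to $\sin^2\alpha\bigl((g(\psi)^\top p)'\bigr)^2 > p_y^2\bigl((g(\psi)^\top a)'\bigr)^2$ via the identity $1-(g(\psi)^\top q)^2 = q_y^2 + \bigl((g(\psi)^\top q)'\bigr)^2$, and then invoke Lemma~\ref{lem:decending-inner-products}. The only cosmetic difference is that the paper justifies the positivity of the denominators by the strict monotonicity of $g(\psi)^\top p$ and $g(\psi)^\top a$ on $[\beta',\beta)$ rather than by your geometric exclusion of $p=\pm g(\psi)$; both are valid.
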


\begin{proof}
    We first show that both sides of \eqref{ineq:greater-inner-product-of-tangents} are well-defined. By Lemma \ref{lem:decending-inner-products}, both $g(\psi)^\top p$ and $g(\psi)^\top a$ are strictly descending on $(\beta',\beta)$. Since these are continuous functions, they are also strictly descending on $[\beta',\beta)$. For $\psi\in(\beta',\beta)$ 
    \begin{align*}
        g(\psi)^\top p<g(\beta')^\top p \le 1,\qquad g(\psi)^\top a<g(\beta')^\top a \le 1.
    \end{align*} One readily verifies that $g(\psi)^\top a\ge 0$, and similarly $g(\psi)^\top p\ge 0$ by Lemma \ref{lem:range-of-p} and the hypothesis that $p \in \mathcal{O}_{xy}^+$. Thus the expressions in \eqref{ineq:greater-inner-product-of-tangents} are well-defined. The second inequality in \eqref{ineq:greater-inner-product-of-tangents} follows from the assertion $(g(\psi)^\top a)'<0$ of Lemma \ref{lem:decending-inner-products}. In view of that, the first inequality is equivalent to
    \begin{align} \label{ineq:lem20-2}
        \frac{\left((g(\psi)^\top p)'\right)^2}{1-\left(g(\psi)^\top p\right)^2} 
        >\frac{\left((g(\psi)^\top a)'\right)^2}{1-\left(g(\psi)^\top a\right)^2}. 
    \end{align} A simple calculation gives
    \begin{align*}
        1-\left(g(\psi)^\top p\right)^2 &= p_y^2 + \left((g(\psi)^\top p)'\right)^2, \\
        1-\left(g(\psi)^\top a\right)^2 & = \sin^2\alpha + \left((g(\psi)^\top a)'\right)^2.
    \end{align*} Substituting into (\ref{ineq:lem20-2}) we get the equivalent expression
    \begin{alignat*}{2}
         \sin^2\alpha\left((g(\psi)^\top p)'\right)^2 &> p_y^2\left((g(\psi)^\top a)'\right)^2, 
    \end{alignat*} which is true from Lemma \ref{lem:decending-inner-products}.
\end{proof}

\begin{lemma} \label{lem:greater-derivatives}
    For any $\psi \in (\beta', \beta)$ we have
    \begin{align} \label{ineq:greater-derivatives}
        \frac{d\sqrt{1-\left(g(\psi)^\top p\right)^2}}{d\psi}>
        \frac{d\sqrt{1-\left(g(\psi)^\top a\right)^2}}{d\psi}
    \end{align}
\end{lemma}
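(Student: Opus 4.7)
The plan is to compute both derivatives explicitly and then recognize them as products of two quantities, each of which has already been controlled by the preceding lemmas. Writing $u(\psi) = g(\psi)^\top p$ and $v(\psi) = g(\psi)^\top a$, the chain rule gives
\begin{align*}
\frac{d\sqrt{1-u(\psi)^2}}{d\psi} &= u(\psi)\cdot\frac{-u'(\psi)}{\sqrt{1-u(\psi)^2}},\\
\frac{d\sqrt{1-v(\psi)^2}}{d\psi} &= v(\psi)\cdot\frac{-v'(\psi)}{\sqrt{1-v(\psi)^2}}.
\end{align*}
So the inequality to be proved factors as a product inequality.

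The first factor of each expression is controlled by Lemma \ref{lem:greater-inner-product}: for $\psi\in[\beta',\beta]$ we have $u(\psi)\ge v(\psi)\ge 0$. Moreover, $v(\psi) = \cos\psi\cos\alpha$ is strictly positive on the open interval $(\beta',\beta)\subset(0,\pi/2)$ since $\alpha\in(0,\pi/2)$, which forces $u(\psi)>0$ as well. The second factor of each expression is exactly the quantity appearing in Lemma \ref{lem:greater-inner-product-of-tangents}, which gives the strict inequality
$$\frac{-u'(\psi)}{\sqrt{1-u(\psi)^2}} \;>\; \frac{-v'(\psi)}{\sqrt{1-v(\psi)^2}} \;>\; 0.$$

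Combining the two, and using that all quantities involved are strictly positive, we chain
$$u(\psi)\cdot\frac{-u'(\psi)}{\sqrt{1-u(\psi)^2}} \;\ge\; v(\psi)\cdot\frac{-u'(\psi)}{\sqrt{1-u(\psi)^2}} \;>\; v(\psi)\cdot\frac{-v'(\psi)}{\sqrt{1-v(\psi)^2}},$$
where the first step uses $u\ge v$ multiplied by a positive quantity, and the second uses Lemma \ref{lem:greater-inner-product-of-tangents} multiplied by the positive factor $v(\psi)$. This establishes \eqref{ineq:greater-derivatives}.

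There is no real obstacle here, since the heavy lifting was already done in Lemmas \ref{lem:greater-inner-product}, \ref{lem:decending-inner-products}, and \ref{lem:greater-inner-product-of-tangents}; the only thing to be careful about is making sure that the two factors being multiplied are genuinely strictly positive on the open interval $(\beta',\beta)$, so that the strict inequality survives the multiplication. This follows from $\alpha,\psi\in(0,\pi/2)$ and the sign information on the derivatives established in Lemma \ref{lem:decending-inner-products}.
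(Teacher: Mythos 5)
Your proof is correct and follows essentially the same route as the paper: write the derivative as a product of two factors and multiply the inequalities supplied by Lemmas \ref{lem:greater-inner-product} and \ref{lem:greater-inner-product-of-tangents}. The only (harmless) difference is the grouping --- you pair $g(\psi)^\top p$ with $-\bigl(g(\psi)^\top p\bigr)'/\sqrt{1-(g(\psi)^\top p)^2}$, so that the second factor is literally the quantity of Lemma \ref{lem:greater-inner-product-of-tangents}, whereas the paper pairs $-\bigl(g(\psi)^\top p\bigr)'$ with $g(\psi)^\top p/\sqrt{1-(g(\psi)^\top p)^2}$; your version is, if anything, slightly more careful about where strict positivity is needed.
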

\begin{proof}
By Lemmas \ref{lem:greater-inner-product} and \ref{lem:greater-inner-product-of-tangents}
$- \big(g(\psi)^\top p \big)' > - \big(g(\psi)^\top a \big)>0.$ By Lemma \ref{lem:greater-inner-product} 
$$\frac{g(\psi)^\top p}{\sqrt{1-\big(g(\psi)^\top p\big)^2}}  > \frac{g(\psi)^\top a}{\sqrt{1-\big(g(\psi)^\top a\big)^2}}.$$ 
Multiplying these two inequalities gives the inequality in the statement.
\end{proof}

{\bf{Step 4.}} We are now ready to prove inequalities \eqref{ineq:greater-difference} and \eqref{ineq:Jp-greater-than-Ja}.
The following lemma proves \eqref{ineq:greater-difference}. 

\begin{lemma} \label{lem:greater-difference}
We have $\J_{abc}(p)-\J_{abc_p}(p) \ge \J_{abc}(a)-\J_{abc_p}(a)$ with equality if and only if $\beta_p=\beta$.
\end{lemma}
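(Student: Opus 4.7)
The plan is to first exploit the fact that $\J_{abc}$ and $\J_{abc_p}$ share identical $a$- and $b$-terms, so for any $q\in\Sp^2$,
\[
\J_{abc}(q) - \J_{abc_p}(q) \;=\; \dS(c,q) - \dS(c_p,q).
\]
Applying this with $q=p$ and $q=a$ reduces the statement to proving
\[
\dS(c,p) - \dS(c_p,p) \;\ge\; \dS(c,a) - \dS(c_p,a),
\]
with equality iff $\beta_p = \beta$.

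Next I would convert each side into a single-variable integral along the arc parametrized by $g$. Since $c=g(\beta)$, $c_p = g(\beta_p)$, and $\beta_p \le \beta$ by Construction \ref{construction:cp}, the fundamental theorem of calculus gives
\[
\dS(c,q) - \dS(c_p,q) \;=\; \int_{\beta_p}^{\beta} \frac{d}{d\psi}\sqrt{1-(g(\psi)^\top q)^2}\, d\psi
\]
for $q\in\{p,a\}$. When $\beta_p=\beta$ both integrals vanish, yielding the claimed equality. Otherwise $\beta_p < \beta$, and I need a strict inequality between the two integrals.

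The crucial observation for that strict case is that by Construction \ref{construction:cp} we have $\beta_p = \max(\beta^*,\beta') \ge \beta'$, so the integration interval $(\beta_p,\beta)$ lies inside $(\beta',\beta)$. Lemma \ref{lem:greater-derivatives} supplies a strict pointwise inequality between the $p$-integrand and the $a$-integrand on all of $(\beta',\beta)$, hence on $(\beta_p,\beta)$. Integrating then gives the required strict dominance.

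I do not foresee a serious obstacle; the chain of lemmas developed in Step 3 has been precisely calibrated to feed into this integration. The only technicality to watch is the endpoint $\psi=\beta'$: if $\beta_p = \beta'$ and $g(\beta')^\top p = 1$, the $p$-integrand has a mild inverse-square-root singularity there. This is harmless, since it is integrable and the strict pointwise comparison on the open interval $(\beta',\beta)$ still forces the strict inequality between the two integrals.
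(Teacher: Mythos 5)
Your proposal is correct and follows essentially the same route as the paper: the same cancellation of the shared $a$- and $b$-terms, the same reduction via the fundamental theorem of calculus to comparing $\int_{\beta_p}^{\beta}$ of the two derivatives, and the same appeal to Lemma \ref{lem:greater-derivatives} combined with $\beta_p \ge \beta'$. Your closing remark about the integrable endpoint singularity at $\psi=\beta'$ is a correct refinement of what the paper dispatches with the single word ``continuity.''
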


\begin{proof}
    If $\beta_p=\beta$, then $c=c_p$ and the equality is trivial. Otherwise, we have the equivalences 
    \begin{align*}
        &&\J_{abc}(p) - \J_{abc_p}(p) & > \J_{abc}(a) - \J_{abc_p}(a) \\
        \LRarrow && \dS(c,p) - \dS(c_p,p) &> \dS(c,a) - \dS(c_p,a) \\
        \LRarrow && \dS\big(g(\beta),p\big) - \dS\big(g(\beta_p),p\big) &> \dS\big(g(\beta),a\big) - \dS\big(g(\beta_p),a\big) \\
        \LRarrow &&
        \int_{\beta_p}^{\beta} \frac{d\sqrt{1-\big(g(\psi)^\top p\big)^2}}{d\psi}d\psi &> 
        \int_{\beta_p}^{\beta} \frac{d\sqrt{1-\big(g(\psi)^\top a\big)^2}}{d\psi}d\psi,
    \end{align*} the last of them true in view of Lemma \ref{lem:greater-derivatives}, continuity and the fact that $\beta_p \ge \beta'$.
\end{proof}

For \ref{ineq:Jp-greater-than-Ja} we first need a lemma:

\begin{lemma} \label{lem:bound-of-phi}
Let $\chi$ be the angle of $b$ from $\mathcal{H}=\vspan(a,c_p)$. Then $\cos^2\chi < \frac{1}{2}$.
\end{lemma}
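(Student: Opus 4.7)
The plan is to compute $\cos^2\chi$ explicitly via the Gram-matrix formula for the orthogonal projection of $b$ onto $\mathcal{H}=\vspan(a,c_p)$, and then to reduce the claim to a one-variable polynomial bound.

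First I would write $\cos^2\chi=\|P_{\mathcal{H}}(b)\|_2^2$. Since $a$ and $c_p$ are unit vectors, the Gram matrix of the basis $\{a,c_p\}$ has $1$'s on the diagonal and $a^\top c_p$ on the off-diagonal, and the standard projection formula gives
\begin{align*}
\cos^2\chi=\frac{(a^\top b)^2+(c_p^\top b)^2-2\,(a^\top c_p)(a^\top b)(c_p^\top b)}{1-(a^\top c_p)^2}.
\end{align*}
Because $a$ and $b$ are reflections of one another across the $xz$-plane while $c_p$ lies in that plane, a direct coordinate calculation yields $a^\top b=\cos 2\alpha$ together with the key coincidence $a^\top c_p=c_p^\top b=\cos\alpha\cos\beta_p$. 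Setting $s:=\cos 2\alpha=\cos\varphi_{AB}$ and $t:=\cos\alpha\cos\beta_p$, the formula collapses to
\begin{align*}
\cos^2\chi=\frac{s^2+t^2(1-2s)}{1-t^2},
\end{align*}
so the inequality $\cos^2\chi<\tfrac12$ is equivalent to $2s^2+t^2(3-4s)<1$.

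Next I would collect the relevant bounds on $s$ and $t$. The hypothesis $60^\circ\le\varphi_{AB}<90^\circ$ forces $s\in(0,1/2]$, while Construction \ref{construction:cp} forces $\beta_p\ge\beta^*=\arccos(s/\cos\alpha)$, whence $t=\cos\alpha\cos\beta_p\le s$. Since $3-4s\ge 1>0$, these yield
\begin{align*}
2s^2+t^2(3-4s)\le 2s^2+s^2(3-4s)=5s^2-4s^3.
\end{align*}
The map $s\mapsto 5s^2-4s^3$ is increasing on $(0,1/2]$ (its derivative is $2s(5-6s)>0$), so its maximum there is $\tfrac34$, attained at $s=\tfrac12$; since $\tfrac34<1$ strictly, the desired strict inequality $\cos^2\chi<\tfrac12$ follows.

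There is no real obstacle: the only slightly non-obvious ingredient is the coincidence $a^\top c_p=c_p^\top b$, which is immediate from $c_p$ lying in the mirror plane of the pair $\{a,b\}$. After that observation, the whole claim reduces to a trivial polynomial maximization on $[0,1/2]$, with the two hypotheses $\varphi_{AB}\ge 60^\circ$ and $\beta_p\ge\beta^*$ entering in exactly the two places where they are needed.
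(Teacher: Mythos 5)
Your proof is correct and follows essentially the same route as the paper: both compute $\cos^2\chi$ as the squared norm of the orthogonal projection of $b$ onto $\vspan(a,c_p)$, exploit the symmetry $a^\top c_p=b^\top c_p$ coming from $c_p$ lying in the mirror plane of $\{a,b\}$, and then invoke exactly the two bounds $\cos\varphi_{AB}\le\tfrac12$ and $a^\top c_p\le\cos\varphi_{AB}$ (i.e.\ $\beta_p\ge\beta^*$). The only difference is bookkeeping — you reduce to the polynomial bound $5s^2-4s^3\le\tfrac34$, whereas the paper expands in the orthonormal basis $\{a,d\}$ and bounds $(a^\top b)^2$ and $(d^\top b)^2$ by $\tfrac14$ separately.
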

\begin{proof}
Set $$d = \frac{c_p-aa^\top c_p}{||c_p-aa^\top c_p||}=\frac{c_p-aa^\top c_p}{\sin\varphi_{AC_p}},$$ and note that $\{a,d\}$ is an orthonormal basis of $\mathcal{H}$. Then the orthogonal projection of $b$ onto $\mathcal{H}$ is $b_\mathcal{H} = (a^\top b)a + (d^\top b)d$. Now
    $$\cos\chi= b^\top \frac{b_\mathcal{H}}{\|b_\mathcal{H}\|_2} = \frac{(a^\top b)^2+(d^\top b)^2}{\sqrt{(a^\top b)^2+(d^\top b)^2}}
    =\sqrt{(a^\top b)^2+(d^\top b)^2}.$$ Recalling that $60^\circ \le \varphi_{AB}\le \varphi_{AC_p}=\varphi_{BC_p}$, we have
    \begin{align*}
        (d^\top b)^2 &= \frac{(\cos\varphi_{BC_p}-\cos\varphi_{AB}\cos\varphi_{AC_p})^2}{\sin^2\varphi_{AC_p}}= \frac{\cos^2\varphi_{AC_p}(1-\cos\varphi_{AB})^2}{\sin^2\varphi_{AC_p}} \\
        &\le \cos^2\varphi_{AB} \frac{(1-\cos\varphi_{AB})^2}{\sin^2\varphi_{AB}} < \cos^2\varphi_{AB} \frac{(1-\cos\varphi_{AB})(1+\cos\varphi_{AB})}{\sin^2\varphi_{AB}} \\
        &= \cos^2\varphi_{AB} \le \frac{1}{4}. 
    \end{align*}
Now $\cos^2\chi = (a^\top b)^2 + (d^\top b)^2 < \cos^2\varphi_{AB} + \frac{1}{4} \le\frac{1}{2}$. 
\end{proof}

Finally, we prove inequality \eqref{ineq:Jp-greater-than-Ja}, which concludes this section:

\begin{lemma} \label{lem:Jp-greater-than-Ja}
We have $\J_{abc_p}(p)\ge \J_{abc_p}(a)$ with strict inequality if $\beta_p > \beta^*$.
\end{lemma}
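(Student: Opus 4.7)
The plan is to split the argument according to whether $\beta_p = \beta^*$ or $\beta_p = \beta' > \beta^*$, corresponding respectively to the non-strict and the strict parts of the statement.

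If $\beta_p = \beta^*$, then $c_p = c^*$, and by the choice of $c^*$ the projective triangle $\triangle ABC^*$ is equilateral with common angle $\varphi_{AB} \ge 60^\circ$. Applying the already-proved Proposition \ref{prop:equiangular} gives $A \in \mathscr{P}_{ABC^*}$, whence $\J_{abc^*}(p) \ge \J_{abc^*}(a)$; since $c_p = c^*$, this is exactly the non-strict inequality $\J_{abc_p}(p) \ge \J_{abc_p}(a)$.

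If $\beta_p = \beta' > \beta^*$ then $c_p = c'$, so by Construction \ref{construction:cp} the vectors $a, p, c_p$ are coplanar and $p$ lies on the great-circle arc from $a$ to $c_p$. I parametrize $\mathcal{H} \cap \Sp^2$ (with $\mathcal{H} = \vspan(a, c_p)$ as in Lemma \ref{lem:bound-of-phi}) by $x(\theta) = \cos\theta\, a + \sin\theta\, d$, where $d = (c_p - (a^\top c_p)a)/\sin\varphi_{AC_p}$, so that $a = x(0)$, $c_p = x(\varphi_{AC_p})$, and $p = x(\theta_p)$ for some $\theta_p \in (0, \varphi_{AC_p}]$. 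Rewriting the target strict inequality as
\begin{align*}
[\dS(a,p) + \dS(c_p,p) - \dS(a,c_p)] + [\dS(b,p) - \dS(b,a)] > 0,
\end{align*}
a sum-to-product identity gives the first bracket as $4\sin(\varphi_{AC_p}/2)\sin(\theta_p/2)\sin((\varphi_{AC_p} - \theta_p)/2) \ge 0$, with equality only when $\theta_p = \varphi_{AC_p}$ (i.e.\ $p = c_p$); in that edge case the second bracket equals $\sin\varphi_{AC_p} - \sin\varphi_{AB} > 0$ since $\varphi_{AC_p} > \varphi_{AB}$, so the inequality holds.

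It remains to handle $\theta_p \in (0, \varphi_{AC_p})$, where the first bracket is strictly positive. Write $b^\top x(\theta) = b_\mathcal{H}^\top x(\theta) = \mu\cos(\theta - \omega)$, with $\mu = \|b_\mathcal{H}\| = \cos\chi$ and $\omega \in (0, \pi/2)$ defined by $\mu\cos\omega = \cos\varphi_{AB}$ and $\mu\sin\omega = \cos\varphi_{AC_p}(1-\cos\varphi_{AB})/\sin\varphi_{AC_p}$. A brief calculation yields $\sin^2\varphi_{AB} - \dS(b,p)^2 = \mu^2\sin\theta_p\sin(2\omega - \theta_p)$, so the second bracket is non-negative whenever $\theta_p \ge 2\omega$ and the inequality is immediate in that subrange. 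In the remaining subrange $\theta_p \in (0, 2\omega)$ both brackets are nontrivial; after canceling the common factor $\sin(\theta_p/2)$ the inequality reduces to a trigonometric inequality purely in $\theta_p, \varphi_{AB}, \varphi_{AC_p}$. The main obstacle is to verify this reduced inequality under the constraints $60^\circ \le \varphi_{AB} \le \varphi_{AC_p} < 90^\circ$ and $\theta_p \in (0, 2\omega)$; I expect to combine the bound $\mu^2 < 1/2$ from Lemma \ref{lem:bound-of-phi} with half-angle substitutions and reduce to a polynomial inequality that can be certified by an explicit positive factorization, in the spirit of the polynomial manipulations in the proof of Proposition \ref{prop:equiangular}.
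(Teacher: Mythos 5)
Your treatment of the case $\beta_p=\beta^*$ is exactly the paper's: reduce to the equilateral triangle $\triangle ABC^*$ and invoke Proposition \ref{prop:equiangular}. Your computations in the case $\beta_p>\beta^*$ are also correct as far as they go: the identity $\dS(a,p)+\dS(c_p,p)-\dS(a,c_p)=4\sin(\varphi_{AC_p}/2)\sin(\theta_p/2)\sin((\varphi_{AC_p}-\theta_p)/2)$ checks out, as does $\sin^2\varphi_{AB}-\dS(b,p)^2=\mu^2\sin\theta_p\sin(2\omega-\theta_p)$, so the subrange $\theta_p\ge 2\omega$ and the endpoint $\theta_p=\varphi_{AC_p}$ are indeed disposed of. But the proof is not complete: on the remaining subrange $\theta_p\in(0,2\omega)$ the second bracket is genuinely negative, and this is precisely where the lemma's content lies. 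You only state that you ``expect'' the reduced inequality to follow from $\mu^2<1/2$ and a positive factorization; nothing is verified. Note that the obvious crude bounds do not suffice there: after cancelling $\sin(\theta_p/2)$ the left-hand side $4\sin(\varphi_{AC_p}/2)\sin((\varphi_{AC_p}-\theta_p)/2)$ has no uniform positive lower bound over the stated constraints alone, so one must exploit the interplay between $\omega$, $\varphi_{AB}$ and $\varphi_{AC_p}$ (e.g.\ the fact that $2\omega<\varphi_{AC_p}$, which follows from $\dS(b,c_p)>\dS(b,a)$). As it stands this is a genuine gap, not a routine verification.

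For comparison, the paper avoids the bracket decomposition entirely. It places $a$, $c_p$, $p$ in a common coordinate plane, writes $p=x(\delta)$ with $\delta=\varphi_{AP}\in[0,\varphi_{AC_p}]$, and shows that
\begin{align*}
\J(\delta)=\sin\delta+\sin(\varphi_{AC_p}-\delta)+\sqrt{1-\cos^2\chi\cos^2(\delta-\theta)}
\end{align*}
is strictly concave on $[0,\varphi_{AC_p}]$, since $\J''(\delta)\le-\sin\varphi_{AC_p}+\cos^2\chi(1-\cos^2\chi)^{-1/2}\le-\tfrac{\sqrt3}{2}+\tfrac{\sqrt2}{2}<0$ by Lemma \ref{lem:bound-of-phi}. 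Strict concavity forces the minimum onto the endpoints, and $\J(0)=\sin\varphi_{AB}+\sin\varphi_{AC_p}<2\sin\varphi_{AC_p}=\J(\varphi_{AC_p})$ finishes the argument in one stroke. If you want to salvage your route, the cleanest fix is to abandon the two-bracket case analysis and establish concavity of your $\theta_p\mapsto\J(x(\theta_p))$ directly, which is where the hypothesis $\cos^2\chi<1/2$ actually earns its keep.
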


\begin{proof}
If $\beta_p = \beta^*$, then $c_p=c^*$ and $a,b,c_p$ are equiangular by Construction \ref{construction:c*}. Since $\varphi_{AB} \ge 60^\circ$, Proposition \ref{prop:equiangular} gives $\J_{abc_p}(p)\ge \J_{abc_p}(a)$. So suppose that 
$\beta_p > \beta^*$. In that case we have $c_p=c'$, $\beta_p\in(\beta^*,\beta]$ and $a,c_p,p$ are coplanar. Hence 
    $$\cos\varphi_{AC_p}=a^\top c_p = \cos\alpha\cos\beta_p \in [\cos\alpha\cos\beta,\cos\alpha\cos\beta^*) = [\cos\varphi_{AC},\cos\varphi_{AB}),$$ and so $\varphi_{AC_p}\in(\varphi_{AB},\varphi_{AC}]$. We need a new coordinate system which brings $a,c_p$ and $p$ in the $xOy$ plane as follows: 
    \begin{align*}
        a &= (1, 0, 0)\\
        b &= (\cos\chi\cos\theta,\cos\chi\sin\theta,\sin\chi)\\
        c_p &= (\cos\varphi_{AC_p},\sin\varphi_{AC_p}, 0)\\
        p &= (\cos\varphi_{AP},\sin\varphi_{AP}, 0).
    \end{align*} Here $\chi$ is the angle of $b$ from the plane $\vspan(a,c_p)$ and $\theta\in(-\pi,\pi]$. Then
    $$\J_{abc}(p) =  \sin\varphi_{AP} + \sin(\varphi_{AC_p}-\varphi_{AP}) + \sqrt{1-\cos^2\chi\cos^2(\varphi_{AP}-\theta)}.$$
    Consider the function
    $$\J(\delta) = \sin\delta + \sin(\varphi_{AC_p}-\delta) + \sqrt{1-\cos^2\chi\cos^2(\delta-\theta)}.$$
    We have 
    \begin{align*}
    \J_{abc}(a) &= \J(0) =\sin\varphi_{AC_p} + \sqrt{1-(a^\top b)^2} = \sin\varphi_{AC_p}+\sin\varphi_{AB}, \\
    \J_{abc}(p) &= \J(\varphi_{AC_p}) =\sin\varphi_{AC_p} + \sqrt{1-(c_p^\top b)^2} = 2\sin\varphi_{AC_p},
    \end{align*} where we used $\varphi_{AC_p} = \varphi_{BC_p}$. Since $p \in \Arc{ac_p}$, we have $0\le \varphi_{AP}\le \varphi_{AC_p}$. By Lemma \ref{lem:bound-of-phi}, $\cos^2\chi \le 1/2$. Consider the second derivative of $\J(\delta)$ for $\delta\in[0,\varphi_{AC_p}]$: 
        \begin{align*}
        \J''(\delta)=& - \sin\delta - \sin(\varphi_{AC_p}-\delta)
        -\cos^2\chi\sin^2(\theta-\delta)\big(1-\cos^2\chi\cos^2(\theta-\delta)\big)^{-3/2}\\
        &+\big(1-\cos^2\chi\cos^2(\theta-\delta)\big)^{-1/2}\cos^2\chi\cos^2(\theta-\delta)\\
        \le &
        - \sin\delta - \sin(\varphi_{AC_p}-\delta)
        +\big(1-\cos^2\chi\cos^2(\theta-\delta)\big)^{-1/2}\cos^2\chi\cos^2(\theta-\delta)\\
        \le &
        - \sin\varphi_{AC_p}
        +(1-\cos^2\chi)^{-1/2}\cos^2\chi\\
        \le& - \frac{\sqrt{3}}{2} + \frac{\sqrt{2}}{2} < 0
    \end{align*}
    We see that $\J(\delta)$ is strictly concave on $[0,\varphi_{AC_p}]$. Then we have 
    $$\J(\varphi_{AP})\ge\min\{\J(0),\J(\varphi_{AC_p})\}=\J(0),$$
    and the equality is achieved if and only if $\varphi_{AP} = 0$, which means $p=a$. Hence, if $p\ne a$, we get 
    $\J(\delta)>\J(0)$, that is $\J_{abc_p}(p)>\J_{abc_p}(a).$
\end{proof}

\section{General Triangles} \label{section:general-case}

In this section we complete the proof of Theorem \ref{thm:theorem} by proving part (1), which is concerned with the general configuration 
$$60^\circ \le \varphi_{AB} \le \varphi_{AC} < \varphi_{BC}.$$ 
It is immediate that $\J_{ABC}(A)<\J_{ABC}(B) \le \J_{ABC}(C)$ and thus in the case of a big triangle $\mathscr{P}_{ABC} = \{A\}$ by Proposition \ref{prp:big-triangle}. We may thus assume that the projective triangle $\triangle ABC$ is not big. Moreover, as in the beginning of \S \, \ref{section:equiangular}, we may assume representatives $a,b,c$ such that $a^\top b, \, a^\top c, \, b^\top c >0$. Without loss of generality, we assume coordinates
\begin{align*}
    a&=(0,0,1)\\
    b&=(\sin\varphi_{AB},0,\cos\varphi_{AB})\\
    c&=(\sin\varphi_{AC}\cos\alpha,\sin\varphi_{AC}\sin\alpha,\cos\varphi_{AC}), 
\end{align*}
where $\alpha \in (0,\pi]$ is given by 
\begin{align*}
    \alpha=\arccos\frac{\cos\varphi_{BC}-\cos\varphi_{AB}\cos\varphi_{AC}}{\sin\varphi_{AB}\sin\varphi_{AC}}. 
\end{align*}

Let $P\in\mathscr{P}_{ABC}$ and $p \in \Sp^2$ a representative of $P$ such that $a^\top p\ge 0$. We assume $P \neq A$, write $p=(\sin\omega\cos\theta,\sin\omega\sin\theta,\cos\omega)$ with $\omega\in(0,\pi/2]$ and $\theta\in(-\pi,\pi]$, and we exhibit a contradiction. First we determine the range of $\theta$:

\begin{lemma} \label{lem:range-theta}
We have $0 \le \theta \le \alpha$. 
\end{lemma}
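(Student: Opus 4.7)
The plan is to extract the bound on $\theta$ from two applications of Lemma \ref{lem:inner-products-of-normal}: one using the plane $\vspan(a,b)$ to show $\theta \ge 0$, and one using the plane $\vspan(a,c)$ to show $\theta \le \alpha$. Before invoking them, I collect the sign information needed as input. The non-big hypothesis gives $a^\top b, a^\top c, b^\top c > 0$, and combined with $a^\top p \ge 0$ Lemma \ref{lem:non-positive-non-negative} yields $b^\top p, c^\top p \ge 0$. I also verify $\alpha \in (0, \pi)$ (equivalently $\sin\alpha > 0$): if $\alpha = \pi$ then $b^\top c$ reduces to $\cos(\varphi_{AB}+\varphi_{AC})$, which is non-positive since $\varphi_{AB}, \varphi_{AC} \ge 60^\circ$, contradicting $b^\top c > 0$.

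For the lower bound, in the given coordinate system the unit normal to $\vspan(a,b)$ is $e_y = (0,1,0)$. I compute $c^\top e_y = \sin\varphi_{AC}\sin\alpha > 0$, so Lemma \ref{lem:inner-products-of-normal} applied with $v_3 = c$ forces $(c^\top e_y)(p^\top e_y) \ge 0$. Since $c^\top e_y > 0$, we get $p^\top e_y = \sin\omega\sin\theta \ge 0$, and $\omega \in (0, \pi/2]$ then gives $\sin\theta \ge 0$, placing $\theta$ in $[0, \pi]$.

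For the upper bound, I take $n = a \times c = (-\sin\varphi_{AC}\sin\alpha,\, \sin\varphi_{AC}\cos\alpha,\, 0)$ as the normal to $\vspan(a,c)$. A direct computation gives $b^\top n = -\sin\varphi_{AB}\sin\varphi_{AC}\sin\alpha < 0$ and $p^\top n = \sin\omega\sin\varphi_{AC}\sin(\theta-\alpha)$. Applying Lemma \ref{lem:inner-products-of-normal} with $v_3 = b$ and $b^\top p \ge 0$, we obtain $(b^\top n)(p^\top n) \ge 0$; together with $b^\top n < 0$ this yields $\sin(\theta-\alpha) \le 0$, which combined with $\theta \in [0, \pi]$ and $\alpha \in (0, \pi)$ rules out the branch $\theta - \alpha \ge \pi$ and forces $\theta \le \alpha$.

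There is no real obstacle here; the argument is essentially a mechanical unwrapping of Lemma \ref{lem:inner-products-of-normal} in the chosen coordinates. The only subtlety worth flagging is the preliminary verification that $\sin\alpha > 0$, which is needed both to ensure the cross product $n = a \times c$ is not accidentally tangent to the wrong half-space and to pin down the signs of $c^\top e_y$ and $b^\top n$ used in the two steps.
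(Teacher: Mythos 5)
Your proof is correct and follows essentially the same route as the paper: both obtain $b^\top p, c^\top p \ge 0$ from Lemma \ref{lem:non-positive-non-negative} and then apply Lemma \ref{lem:inner-products-of-normal} twice, with normals to $\vspan(a,b)$ and $\vspan(a,c)$ (your $e_y$ and $a\times c$ are positive and negative multiples of the paper's $a\times b$ and $c\times a$, respectively). Your explicit check that $\sin\alpha>0$ is a small point the paper leaves implicit, but otherwise the two arguments coincide.
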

\begin{proof}
Since by hypothesis $a^\top p\ge 0$, Lemma \ref{lem:non-positive-non-negative} gives $b^\top p, \, c^\top p\ge 0$. Let $n_{ab}=a\times b$ be the normal vector to the plane spanned by $a,b$. Writing $n_{ab}$ in coordinates gives $$n_{ab}^\top c=\sin\varphi_{AB}\sin\varphi_{AC}\sin\alpha > 0.$$ This, together with $c^\top p\ge 0$ and Lemma \ref{lem:inner-products-of-normal}, give $$n_{ab}^\top p = \sin\varphi_{AB}\sin\omega\sin\theta\ge 0.$$  This implies $\sin\theta\ge 0$ and so $\theta \ge 0$. A similar argument with $n_{ca} = c\times a$ gives $\sin(\alpha-\theta)\ge 0$, that is $\alpha-\theta \ge 0$. 
\end{proof}

Next, we define two auxiliary points:

\begin{construction} \label{construction:b',c'}
With angle $\alpha' \in [0, \pi]$ given by 
$$ \alpha'=\arccos\frac{\cos\varphi_{AC}-\cos\varphi_{AB}\cos\varphi_{AC}}{\sin\varphi_{AB}\sin\varphi_{AC}},$$
we define points $b',c' \in \Sp^2$, respectively representing points $B', C' \in \P^2$, as 
\begin{align*}
    b'&=\big(\sin\varphi_{AB}\cos(\alpha-\alpha'),\sin\varphi_{AB}\sin(\alpha-\alpha'),\cos\varphi_{AB}\big),\\
    c'&=(\sin\varphi_{AC}\cos\alpha',\sin\varphi_{AC}\sin\alpha',\cos\varphi_{AC}).
\end{align*} One verifies easily that $a^\top c' = b^\top c'= b'^\top c = \cos\varphi_{AC}$ and $a^\top b'=\cos\varphi_{AB}$.
\end{construction}

We also need:

\begin{lemma} \label{lem:alpha'}
With $\alpha'$ as in Construction \ref{construction:b',c'} we have $\alpha' \in (\alpha/2,\alpha)$. 
\end{lemma}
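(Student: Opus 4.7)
The plan is to prove the two inequalities $\alpha' < \alpha$ and $\alpha' > \alpha/2$ separately. For the upper bound, I would directly compare the defining formulas: $\cos\alpha' - \cos\alpha = (\cos\varphi_{AC} - \cos\varphi_{BC})/(\sin\varphi_{AB}\sin\varphi_{AC}) > 0$ since $\varphi_{AC} < \varphi_{BC}$; as $\alpha, \alpha' \in [0,\pi]$ and $\cos$ is strictly decreasing there, this yields $\alpha' < \alpha$.

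For the lower bound I first rewrite the defining formula for $\alpha'$, factoring $1 - \cos\varphi_{AB}$ and using the half-angle identity $(1-\cos x)/\sin x = \tan(x/2)$, as
$$\cos\alpha' = \cot\varphi_{AC}\,\tan(\varphi_{AB}/2).$$
Both factors are non-negative (since $\varphi_{AC}, \varphi_{AB}/2 \in (0,\pi/2]$), so $\alpha' \in [0,\pi/2]$ and $2\alpha' \in [0,\pi]$; hence $2\alpha' > \alpha$ is equivalent to $\cos(2\alpha') < \cos\alpha$. Next I would use that the spherical law of cosines makes $\cos\alpha$ strictly decreasing in $\varphi_{BC}$ for fixed $\varphi_{AB},\varphi_{AC}$, while the non-big hypothesis forces $\varphi_{BC} < \pi/2$; consequently $\cos\alpha > -\cot\varphi_{AB}\cot\varphi_{AC}$. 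It therefore suffices to establish the $\varphi_{BC}$-free inequality
$$2\cot^2\varphi_{AC}\,\tan^2(\varphi_{AB}/2) + \cot\varphi_{AB}\cot\varphi_{AC} \le 1.$$

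To close this, I observe that the left side, viewed as a function of $y = \cot\varphi_{AC}$ with $\varphi_{AB}$ fixed, has derivative $4y\tan^2(\varphi_{AB}/2) + \cot\varphi_{AB} > 0$ and is therefore strictly increasing in $y$; since $\varphi_{AC} \ge \varphi_{AB}$ gives $y \le \cot\varphi_{AB}$, the maximum occurs at $\varphi_{AC} = \varphi_{AB}$, reducing the inequality to the one-variable statement $\cot^2\varphi_{AB}\bigl(1 + 2\tan^2(\varphi_{AB}/2)\bigr) \le 1$. Setting $c = \cos\varphi_{AB}$ and using $\tan^2(x/2) = (1-\cos x)/(1+\cos x)$, the left side simplifies to $c^2(3-c)/[(1-c)(1+c)^2]$, and the inequality unfolds to the polynomial inequality $4c^2 - c - 1 \le 0$. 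The positive root of this quadratic is $(1+\sqrt{17})/8 \approx 0.64$, and the hypothesis $\varphi_{AB} \ge 60^\circ$ forces $c \le 1/2$, comfortably below this root, so the inequality holds strictly.

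The main obstacle is the lower bound $\alpha' > \alpha/2$: the trick is to orchestrate two successive monotonicity reductions — first in $\varphi_{BC}$ via the spherical law of cosines (which is where the non-big hypothesis $\varphi_{BC} < \pi/2$ is spent), then in $\cot\varphi_{AC}$ (where the hypothesis $\varphi_{AB} \le \varphi_{AC}$ is spent) — to collapse a three-parameter trigonometric estimate into a single-variable polynomial inequality in $\cos\varphi_{AB}$, where the $60^\circ$ threshold appears transparently as $\cos\varphi_{AB} \le 1/2$.
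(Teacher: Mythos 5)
Your proposal is correct, and the upper bound $\alpha'<\alpha$ is handled exactly as in the paper. For the lower bound, however, you take a genuinely different route. The paper decouples the two angles: it shows $\alpha<2\pi/3$ by bounding $\cos\alpha>-\cot\varphi_{AB}\cot\varphi_{AC}\ge-1/3>-1/2$ (this is where $\varphi_{AB},\varphi_{AC}\ge 60^\circ$ and $\cos\varphi_{BC}>0$ are spent), and separately shows $\alpha'>\pi/3$ from $\cos\alpha'=\cot\varphi_{AC}\tan(\varphi_{AB}/2)\le\cos\varphi_{AB}/(1+\cos\varphi_{AB})<1/2$ (which only needs $\varphi_{AC}\ge\varphi_{AB}$), so that $2\alpha'>2\pi/3>\alpha$. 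You instead keep the angles coupled, comparing $\cos(2\alpha')$ directly against the same lower bound $-\cot\varphi_{AB}\cot\varphi_{AC}$ for $\cos\alpha$, and then collapse the resulting two-variable inequality by monotonicity in $\cot\varphi_{AC}$ down to $4c^2-c-1\le 0$ with $c=\cos\varphi_{AB}\le 1/2$. Both arguments use the same three hypotheses in the same roles; the paper's version is shorter and avoids the polynomial computation, while yours is sharper at the equilateral extreme (your quadratic holds up to $c=(1+\sqrt{17})/8\approx 0.64$, whereas the paper's decoupling through the threshold $2\pi/3$ requires $\cot\varphi_{AB}\cot\varphi_{AC}\le 1/2$). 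One small point of care: your claim $2\alpha'\in[0,\pi]$ does need the observation that $\cot\varphi_{AC}\ge 0$, which holds here because the non-big reduction in \S\ref{section:general-case} forces $\varphi_{AC}<\pi/2$; you use this implicitly and it is fine, but it is worth stating.
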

\begin{proof}
By hypothesis $\varphi_{AC}<\varphi_{BC}$ and so by definition $\alpha'<\alpha$. We are going to prove $2\alpha' > \alpha$.
First we have $\alpha < 2\pi/3$, since $\varphi_{AB}, \varphi_{AC} \ge 60^\circ$ and so 
$$\cos\alpha > \frac{-\cos\varphi_{AB}\cos\varphi_{AC}}{\sin\varphi_{AB}\sin\varphi_{AC}}=-\cot \varphi_{AB} \cot \varphi_{AC}> - \frac{1}{3} > - \frac{1}{2}.$$
Moreover, since  
$$ \frac{\cos\varphi_{AC}}{\sin\varphi_{AC}}\le \frac{\cos\varphi_{AB}}{\sin\varphi_{AB}}<\frac{1+\cos\varphi_{AB}}{2\sin\varphi_{AB}},$$ we get $\alpha' > \pi/3$ from 
$$\cos\alpha' = \frac{\cos\varphi_{AC}}{\sin\varphi_{AC}} \frac{1-\cos\varphi_{AB}}{\sin\varphi_{AB}} < \frac{1+\cos\varphi_{AB}}{2\sin\varphi_{AB}}\frac{1-\cos\varphi_{AB}}{\sin\varphi_{AB}}
    = \frac{1}{2}.$$ Thus $2 \alpha' > 2 \pi / 3 > \alpha$ from which we read $\alpha' > \alpha / 2$.
\end{proof}

Let us consider the arrangements $ABC'$ and $AB'C$. We have:
\begin{lemma} \label{lem:ABC'-AB'C}
$\{A,B\} \subset \mathscr{P}_{ABC'}$ and $\{A,B'\} \subset \mathscr{P}_{AB'C}$. 
\end{lemma}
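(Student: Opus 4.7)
The plan is to observe that both auxiliary triangles $ABC'$ and $AB'C$ are isosceles (possibly equilateral) with the smallest side equal to $\varphi_{AB}$, so that parts (2) and (3) of Theorem \ref{thm:theorem}, already proved in \S \, \ref{section:equiangular} and \S \, \ref{section:two-equal-angles}, apply directly.

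First I would read off the pairwise angles of the two triangles from Construction \ref{construction:b',c'}. The identities $a^\top c' = b^\top c' = \cos\varphi_{AC}$ give $\varphi_{AC'} = \varphi_{BC'} = \varphi_{AC}$, so the triangle $\triangle ABC'$ has the three pairwise angles $\varphi_{AB}, \varphi_{AC}, \varphi_{AC}$, where $\varphi_{AB} \le \varphi_{AC}$ by the hypothesis of part (1) of Theorem \ref{thm:theorem}. Similarly, $a^\top b' = \cos\varphi_{AB}$ and $b'^\top c = \cos\varphi_{AC}$ yield $\varphi_{AB'} = \varphi_{AB}$ and $\varphi_{B'C} = \varphi_{AC}$, so $\triangle AB'C$ has pairwise angles $\varphi_{AB}, \varphi_{AC}, \varphi_{AC}$ as well. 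In both triangles the shortest side (of length $\varphi_{AB}\ge 60^\circ$) joins $A$ to the "primed" counterpart of $B$ (namely $B$ itself in $\triangle ABC'$, and $B'$ in $\triangle AB'C$).

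Next I would split into two cases according to whether $\varphi_{AB}<\varphi_{AC}$ or $\varphi_{AB}=\varphi_{AC}$. In the first case, both triangles satisfy exactly the hypothesis $60^\circ \le \varphi_{AB} < \varphi_{AC} = \varphi_{AC}$ of Theorem \ref{thm:theorem}(2), so $\mathscr{P}_{ABC'}=\{A,B\}$ and $\mathscr{P}_{AB'C}=\{A,B'\}$, which is stronger than what is claimed. In the second case, both triangles are equilateral with common angle $\varphi_{AB}=\varphi_{AC}\ge 60^\circ$, so Theorem \ref{thm:theorem}(3a) or (3b) gives $\{A,B,C'\}\subset \mathscr{P}_{ABC'}$ and $\{A,B',C\}\subset \mathscr{P}_{AB'C}$, which again implies the claim.

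There is essentially no obstacle here: the lemma is a routine bookkeeping step that repackages Theorem \ref{thm:theorem}(2)--(3) in the language needed for the deformation argument in \S \, \ref{section:general-case}. The only thing one must remember is that part (1) is being established by reducing to parts (2)--(3), so it is legitimate to invoke them. The real work of this section is the subsequent comparison $\mathcal{J}_{ABC}(p)$ versus $\mathcal{J}_{ABC}(a)$ via $\triangle ABC'$ and $\triangle AB'C$, for which this lemma is merely the opening input.
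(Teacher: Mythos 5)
Your proposal is correct and is essentially identical to the paper's own proof: both read off the angles $\varphi_{AC'}=\varphi_{BC'}=\varphi_{AC}$ and $\varphi_{AB'}=\varphi_{AB}$, $\varphi_{B'C}=\varphi_{AC}$ from Construction \ref{construction:b',c'} and then invoke parts (2) and (3) of Theorem \ref{thm:theorem}, which are already established at this point. Your explicit case split between the isosceles and equilateral situations is just a slightly more spelled-out version of the paper's one-line conclusion.
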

\begin{proof}
By Construction \ref{construction:b',c'} we have 
\begin{align*}
A,B,C':& \, \varphi_{AB} \le  \varphi_{AC'} = \varphi_{BC'}, \\
A,B',C:& \, \varphi_{AB'} \le  \varphi_{AC} = \varphi_{B'C}. 
\end{align*} Hence the arrangements $ABC'$ and $AB'C$ fall either in part (2) or part (3) of Theorem \ref{thm:theorem}, which have already been proved. 
\end{proof}

Now, from Lemma \ref{lem:ABC'-AB'C} we have
\begin{align*}
\J_{abc'}(p) \ge & \J_{abc'}(a), \\ 
 \J_{ab'c}(p)\ge & \J_{ab'c}(a).
\end{align*} From Lemma \ref{lem:range-theta} and Lemma \ref{lem:alpha'} we have that either $\theta\in[0,\alpha']$ or $\theta\in[\alpha-\alpha',\alpha]$. If $\theta\in[0,\alpha']$, and since $\alpha \in (0, \pi]$, we have $0 \le \alpha'-\theta < \alpha - \theta$ and so 
\begin{align*}
    0\le c^\top p &= \sin\varphi_{AC}\sin\omega\cos(\alpha-\theta)+\cos(\omega) \cos(\varphi_{AC}) \\ &< \sin\varphi_{AC}\sin\omega\cos(\alpha'-\theta) +\cos(\omega) \cos(\varphi_{AC})= c'^\top p.
\end{align*}
Hence $\dS(c,p)>\dS(c',p)$ and we get the contradiction $$\J_{abc}(p)>\J_{abc'}(p)\ge \J_{abc'}(a)=\J_{abc}(a).$$ 
By a similar argument, if $\theta \in [\alpha-\alpha',\alpha]$ we get $0\le b^\top p  < b'^\top p$ and thus arrive at the contradiction $\J_{abc}(p)>\J_{ab'c}(p)\ge \J_{ab'c}(a)=\J_{abc}(a)$, concluding the proof of Theorem \ref{thm:theorem}.

\bibliographystyle{amsalpha}
\bibliography{TsakirisXu-22.bbl}

\providecommand{\bysame}{\leavevmode\hbox to3em{\hrulefill}\thinspace}
\providecommand{\MR}{\relax\ifhmode\unskip\space\fi MR }
\providecommand{\MRhref}[2]{%
  \href{http://www.ams.org/mathscinet-getitem?mr=#1}{#2}
}
\providecommand{\href}[2]{#2}
\begin{thebibliography}{MSW02}

\bibitem[Afs11]{afsari2011riemannian}
B.~Afsari, \emph{Riemannian $l^p$ center of mass: existence, uniqueness, and
  convexity}, Proceedings of the American Mathematical Society \textbf{139}
  (2011), no.~2, 655--673.

\bibitem[AM14]{arnaudon2014means}
M.~Arnaudon and L.~Miclo, \emph{Means in complete manifolds: uniqueness and
  approximation}, ESAIM: Probability and Statistics \textbf{18} (2014),
  185--206.

\bibitem[Bar00]{barbara2000fermat}
R.~Barbara, \emph{{The Fermat-Torricelli points of n lines}}, The Mathematical
  Gazette \textbf{84} (2000), no.~499, 24--29.

\bibitem[BFS02]{benitez2002location}
C.~Ben{\'\i}tez, M.~Fern{\'a}ndez, and M.~Soriano, \emph{{Location of the
  Fermat-Torricelli medians of three points}}, Transactions of the American
  Mathematical Society \textbf{354} (2002), no.~12, 5027--5038.

\bibitem[BY07]{benitez2007middle}
C.~Ben{\'\i}tez and D.~Y{\'a}{\~n}ez, \emph{Middle points, medians and inner
  products}, Proceedings of the American Mathematical Society \textbf{135}
  (2007), no.~6, 1725--1734.

\bibitem[CG85]{chakerian1985fermat}
G.D. Chakerian and M.A. Ghandehari, \emph{{The Fermat problem in Minkowski
  spaces}}, Geometriae Dedicata \textbf{17} (1985), no.~3, 227--238.

\bibitem[CHS96]{conway1996packing}
J.H. Conway, R.H. Hardin, and N.JA. Sloane, \emph{{Packing lines, planes, etc.:
  Packings in Grassmannian spaces}}, Experimental mathematics \textbf{5}
  (1996), no.~2, 139--159.

\bibitem[CHS21]{chen2021search}
X.~Chen, D.P. Hardin, and E.B. Saff, \emph{On the search for tight frames of
  low coherence}, Journal of Fourier Analysis and Applications \textbf{27}
  (2021), no.~1, 1--27.

\bibitem[Coc67]{cockayne1967steiner}
E.J. Cockayne, \emph{{On the Steiner problem}}, Canadian Mathematical Bulletin
  \textbf{10} (1967), no.~3, 431--450.

\bibitem[Coc72]{cockayne1972fermat}
\bysame, \emph{{On Fermat's problem on the surface of a sphere}}, Mathematics
  Magazine \textbf{45} (1972), no.~4, 216--219.

\bibitem[Eri97]{eriksson1997fermat}
F.~Eriksson, \emph{{The Fermat-Torricelli problem once more}}, The Mathematical
  Gazette \textbf{81} (1997), no.~490, 37--44.

\bibitem[HZ17]{hajja2017complete}
M.~Hajja and A.~Zachos, \emph{{A complete analytical treatment of the weighted
  Fermat--Torricelli point for a triangle}}, Journal of Geometry \textbf{108}
  (2017), no.~1, 99--110.

\bibitem[KM97]{kupitz1997geometric}
Y.~Kupitz and H.~Martini, \emph{{Geometric aspects of the generalized
  Fermat-Torricelli problem}}, Bolyai Society Mathematical Studies \textbf{6}
  (1997), 55--129.

\bibitem[LY18]{lin2018tropical}
B.~Lin and R.~Yoshida, \emph{{Tropical Fermat--Weber points}}, SIAM Journal on
  Discrete Mathematics \textbf{32} (2018), no.~2, 1229--1245.

\bibitem[MSW02]{martini2002fermat}
H.~Martini, K.J. Swanepoel, and G.~Wei{\ss}, \emph{{The Fermat--Torricelli
  problem in normed planes and spaces}}, Journal of Optimization Theory and
  Applications \textbf{115} (2002), no.~2, 283--314.

\bibitem[P29]{MGP}
M.G. P, \emph{Solution des deux problèmes de trigonométrie sphérique
  énoncés à la pag. 64 du présent volume et de divers autres problèmes
  analogues}, Annales de Mathématiques Pures et Appliquées \textbf{20}
  (1829), 137--151.

\bibitem[Tay77]{taylor1977regular}
D.E. Taylor, \emph{Regular 2-graphs}, Proceedings of the London Mathematical
  Society \textbf{3} (1977), no.~2, 257--274.

\bibitem[Tsa17]{tsakiris2017dual}
M.C. Tsakiris, \emph{{Dual Principal Component Pursuit and Filtrated Algebraic
  Subspace Clustering}}, Ph.D. thesis, Johns Hopkins University, 2017.

\bibitem[TV17]{tsakiris2017hyperplane}
M.C. Tsakiris and R.~Vidal, \emph{Hyperplane clustering via dual principal
  component pursuit}, International Conference on Machine Learning, 2017,
  pp.~3472--3481.

\bibitem[Ute14]{uteshev2014analytical}
A.Y. Uteshev, \emph{{Analytical solution for the generalized Fermat-Torricelli
  problem}}, The American Mathematical Monthly \textbf{121} (2014), no.~4,
  318--331.

\bibitem[Viv59]{vivianimaximis}
V.~Viviani, \emph{{De maximis et minimis divinatio in quintum Conicorum
  Apollonii Pergaei. Part II}}, 1659.

\bibitem[Yan10]{yang2010riemannian}
L.~Yang, \emph{Riemannian median and its estimation}, LMS Journal of
  Computation and Mathematics \textbf{13} (2010), 461--479.

\bibitem[ZZ08]{zachos2008weighted}
A.N. Zachos and G.~Zouzoulas, \emph{{The weighted Fermat-Torricelli problem and
  an``inverse''problem}}, Journal of Convex Analysis \textbf{15} (2008), no.~1,
  55.

\end{thebibliography}


\end{document}